\DeclareMathAlphabet{\mymathbb}{U}{BOONDOX-ds}{m}{n}
\def\Mbar{\overline{\mathcal{M}}}
\def\M{\mathfrak{M}}
\def\C{\mathfrak{C}}
\def\CH{\mathrm{CH}}
\def\CHOP{\mathrm{CH}_{\mathrm{OP}}}
\def\R{\mathrm{R}}
\def\st{\mathrm{st}}
\def\aut{\mathrm{Aut}}
\def\zero{\mymathbb{0}}
\def\one{\mymathbb{1}}
\def\A{\mathcal{A}}
\def\id{\mathrm{id}}
\def\AA{\mathbb{A}}
\def\PP{\mathbb{P}}
\def\QQ{\mathbb{Q}}
\def\TT{\mathcal{T}}
\def\Hom{\mathrm{Hom}}
\def\PGL{\mathrm{PGL}}
\def\Rat{\mathrm{Rat}}
\def\mcE{\mathcal{E}}
\def\mcO{\mathcal{O}}
\def\mfA{\mathfrak{A}}
\def\mfB{\mathfrak{B}}
\def\rk{\mathrm{rk}\,}
\def\codim{\mathrm{codim}}
\def\Hom{\mathrm{Hom}}
\newcommand{\rar}[1]       {\stackrel{#1}{\longrightarrow}}
\newcommand{\isomarrow}         {\rar{\sim}}
\newcommand{\isomto}            {\stackrel{\sim}{\to}}
\newcommand{\isomfrom}          {\stackrel{\sim}{\gets}}
\theoremstyle{definition}
\newtheorem{definition}{Definition}[section]
\newtheorem{theorem}[definition]{Theorem}
\newtheorem{example}[definition]{Example}
\newtheorem{proposition}[definition]{Proposition}
\newtheorem{corollary}[definition]{Corollary}
\newtheorem{lemma}[definition]{Lemma}
\newtheorem{remark}[definition]{Remark}
\newtheorem*{conjecture*}{Conjecture}
\newtheorem*{question*}{Question}
\newtheorem*{theorem*}{Theorem}
\title{Chow rings of stacks of prestable curves I}
\author{Younghan Bae\footnote{Department of Mathematics, ETH Zurich} , Johannes Schmitt\footnote{Institute for Mathematics, University of Zurich},\\with an appendix joint with Jonathan Skowera}
\date{May 2022}
\begin{document}
\maketitle
\begin{abstract}
We study the Chow ring of the moduli stack $\M_{g,n}$ of prestable curves and define the notion of tautological classes on this stack. We extend formulas for intersection products and functoriality of tautological classes under natural morphisms from the case of the tautological ring of the moduli space $\Mbar_{g,n}$ of stable curves. This paper provides foundations for the paper \cite{BaeSchmitt2}.

In the appendix (joint with J. Skowera), we develop the theory of a proper, but not necessary projective, pushforward of algebraic cycles. The proper pushforward is necessary for the construction of the tautological rings of $\M_{g,n}$ and is important in its own right. We also develop operational Chow groups for algebraic stacks.
    \\~\\
    \noindent 2020 Mathematics Subject Classification:\\
    \noindent 14H10, 14C15 (Primary) 14C17 (Secondary)
\end{abstract}

\tableofcontents
\section{Introduction}
Let $\Mbar_{g,n}$ be the moduli space of stable curves. It parameterizes tuples $(C,p_1, \ldots, p_n)$ of a nodal curve $C$ of arithmetic genus $g$ with $n$ distinct smooth marked points such that $C$ has only finitely many automorphisms fixing the points $p_i$. After Mumford's seminal paper \cite{mumford83}, there has been a substantial study of the structure of the tautological rings \[\R^{*}(\Mbar_{g,n}) \subseteq \CH^{*}(\Mbar_{g,n})_\mathbb{Q}\,.\]
The tautological rings form a system of subrings of $\CH^{*}(\Mbar_{g,n})_\mathbb{Q}$ with explicit generators defined using the universal curve and the boundary gluing maps of the spaces $\Mbar_{g,n}$, see \cite{graberpandh}.

A natural extension of $\Mbar_{g,n}$ is the moduli stack $\M_{g,n}$ of marked prestable curves, in which we drop the condition of having only finitely many automorphisms. It is a smooth algebraic stack, locally of finite type over the base field $k$ and containing $\Mbar_{g,n}$ as an open substack. However, by allowing infinite automorphism groups, the stacks of prestable curves are no longer  Deligne-Mumford stacks and not of finite type\footnote{In fact, the stack $\M_{0,0}$ contains a finite type open substack which is not even a quotient stack, see \cite[Proposition 5.2]{kreschflattening}.}. 

A recent application of Chow groups of such non-finite type algebraic stacks appeared in the paper \cite{BHPSS}, which studied cycle classes and tautological rings for the universal Picard stack $\mathfrak{Pic}_{g}$ over the stack $\M_{g}$. The stack $\mathfrak{Pic}_{g}$ parameterizes pairs $(C,\mathcal{L})$ of a prestable curve $C$ and a line bundle $\mathcal{L}$ on $C$. In \cite{BHPSS}, results from \cite{DRtarget} are used to prove a formula for the fundamental class of the closure of the zero section $\{(C, \mathcal{O}_C)\} \subseteq \mathfrak{Pic}_{g}$. By pulling back this equality under natural morphisms $\Mbar_{g,n} \to \mathfrak{Pic}_{g}$, new results about the classical double ramification cycles on the moduli of stable curves are established. 
%As an algebraic stack, $\M_{g,n}$ even has a finite type open substack which does not have a representation as a quotient stack, see \cite{kreschversal}. 

In the paper \cite{BHPSS}, the intersection theory of $\mathfrak{Pic}_{g}$ is studied using a definition of operational Chow groups modelled on \cite[Chapter 17]{Fulton1984Intersection-th}.
In our paper, we follow the approach \cite{kreschartin} by Kresch, who developed a cycle theory for algebraic stacks of finite type over a field $k$. This theory has many structural advantages over the operational theory of \cite{BHPSS}, such as projective pushforwards and an excision sequence, and for a smooth stack always admits a natural map to this operational theory.\footnote{See Appendix \ref{Sect:OperationalChow} for more details on the comparison of the definitions.}

We extend Kresch's theory from the case of finite-type stacks to the case of algebraic stacks $\mathfrak{X}$ locally of finite type over $k$ (such as $\M_{g,n}$) by defining their Chow groups\footnote{Unless stated otherwise, all Chow groups in the paper will be taken with $\mathbb{Q}$-coefficients.} as the limit 
\[\CH_*(\mathfrak{X}) = \varprojlim_{i \in I}  \CH_*(\mathcal U_i),\]
for $(\mathcal U_i)_{i \in I}$ a directed system of finite-type open substacks covering $\mathfrak{X}$.
Using this definition, we define the tautological ring $\R^*(\M_{g,n}) \subseteq \CH^*(\M_{g,n})$, extending the definition \cite{graberpandh} for the moduli spaces of stable curves.\footnote{There is a small caveat: the intersection theory of $\CH^*(\M_{1,0})$ is not covered by \cite{kreschartin} because the stabilizer group at the general point is not a linear algebraic group. In this paper, we exclude this case.}

\subsection*{Proper pushforwards of Chow groups of algebraic stacks}
When extending the definition of the tautological ring to the stacks of prestable curves, we immediately encounter a problem: for the spaces $\Mbar_{g,n}$ of stable curves, these rings can be defined as the smallest system of subrings of $\CH^*(\Mbar_{g,n})$ closed under pushforwards by gluing morphisms and the morphisms $\overline{\mathcal{C}}_{g,n} = \Mbar_{g,n+1} \to \Mbar_{g,n}$ giving the universal curve over $\Mbar_{g,n}$. However, for the stacks $\M_{g,n}$ of prestable curves, the analogous universal curve morphisms are in general proper, but not projective (see \cite[Example 2.3]{fulghesurational}). 
%\jocomment{Andrew remarked too that he things the gluing morphisms are projective in fact (since they are proper and quasi-finite they are actually finite). So I propose to just remove the mentioning of gluing not being projective here.}
Thus Kresch's Chow theory, which a priori only has projective pushforwards, cannot be applied immediately. Historically, this has been a major obstruction in the study of the Chow groups $\CH^*(\M_{g,n})$ and made it necessary to give many ad-hoc constructions of classes that are traditionally defined by proper pushforwards (see \cite{fulghesutaut, fulghesu3nodes}).

To overcome this obstacle, joint with Skowera, we define proper pushforwards for cycle groups of algebraic stacks. The corresponding {results are} included as Appendix \ref{sec:Properpushforward} to our paper. We state here the main properties of this construction.

\begin{theorem}[see Theorem \ref{thm:properpushforward} and Proposition \ref{prop:properCompatibility}]
Let $Y$ be a stack stratified by global quotient stacks, and let $f : X \to Y$ be a proper, representable morphism. Then there is a proper pushforward $f_* : \CH_d(X,\mathbb{Z}) \to \CH_d(Y,\mathbb{Z})$ for all $d$ which is functorial (with respect to compositions) %\jocomment{mention in main text?} \Ycomment{I think it is a good idea} 
and compatible with flat pullbacks and refined Gysin pullbacks.

If, instead, $f$ is proper and of relative Deligne-Mumford type, then there is a proper pushforward $f_* : \CH_d(X, \QQ) \to \CH_d(Y, \QQ)$ for all $d$, with the properties above.
\end{theorem}
% To describe the main idea of the construction, recall that a cycle in $\CH_*(X)$ is given by a tuple $(Z\to X, E \to Z, \alpha)$, where $Z \to X$ is projective, $E \to Z$ is a vector bundle and $\alpha$ is a naive Chow class on the bundle $E$. 
% \jocomment{Continue sketch of main idea? Or refer to place where it is sketched?}

\subsection*{The universal curve over the stack of prestable curves}
A second problem that we encounter when generalizing the definition of the tautological ring of $\Mbar_{g,n}$ to $\M_{g,n}$ is that the universal curve $\C_{g,n} \to \M_{g,n}$ is \emph{not} given by the forgetful map $\M_{g,n+1} \to \M_{g,n}$. In particular, since the forgetful maps are in general not proper, we cannot define $(\R^*(\M_{g,n}))_{g,n}$ as the smallest system of subrings of $(\CH^*(\M_{g,n}))_{g,n}$ closed under gluing and forgetful pushforwards.

To overcome this issue (and give a modular interpretation of $\C_{g,n}$ as a stack of $(n+1)$-pointed curves), we use the notion of prestable curves \emph{with values in a semigroup $\A$} from \cite{behrendmanin, costello}. Given a suitable (commutative) semigroup\footnote{See Section \ref{Sect:Avaluedprestable} for the precise definition and technical conditions we require for these semigroups.} $\A$ and an element $a \in \A$, these references define a stack $\M_{g,n,a}$ parameterizing tuples $(C,p_1, \ldots, p_n, (a_{C_v})_v)$ of a prestable curve $(C,p_1, \ldots, p_n)$ together with a value $a_{C_v} \in \A$ for each component $C_v$ of $C$ such that all $a_{C_v}$ sum up to $a$ in $\A$. Moreover, in contrast to the stack $\M_{g,n}$, the definition of $\M_{g,n,a}$ includes a stability condition: any component $C_v$ such that $a_{C_v}=\zero \in \A$ is the neutral element of $\A$ must actually be stable, i.e. have a finite group of automorphisms fixing all markings and nodes on $C_v$. The advantage of this stability condition is that the natural forgetful map $\pi: \M_{g,n+1,a} \to \M_{g,n,a}$, which forgets the last marking and contracts the component containing it if it becomes unstable, does define the universal curve over $\M_{g,n,a}$.

Applying this machinery to a particularly simple semigroup, we obtain the desired modular interpretation of $\C_{g,n}$. For this consider the semigroup
\[\A=\{\zero,\one\}\text{ with }\zero + \zero = \zero, \zero + \one = \one + \zero = \one + \one = \one.\]
Then we show the following.
\begin{proposition}[see Proposition \ref{Prop:Mgnsemigroup}, Corollary \ref{Cor:modinterpretunivcurve}] \label{Pro:modinterpretintro}
Let $g,n \geq 0$ and consider the semigroup $\A=\{\zero,\one\}$ above. Then the stack $\M_{g,n}$ is naturally contained inside $\M_{g,n,\one}$ as the open substack of $(C,p_1, \ldots, p_n, (a_{C_v})_v)$ such that $a_{C_v}=\one$ for all $v$. Thus the universal curve $\C_{g,n}$ is naturally contained as an open substack of $\M_{g,n+1,\one}$ sitting in the cartesian diagram
\begin{equation*}
\begin{tikzcd}[column sep=-4pt]
\C_{g,n} \arrow[d] & \subseteq & \M_{g,n+1,\one} \arrow[d,"\pi"]\\
\M_{g,n} & \subseteq & \M_{g,n,\one}
\end{tikzcd}
\end{equation*}
\end{proposition}
In particular, this proposition indeed gives an interpretation of $\C_{g,n}$ as a stack of $(n+1)$-pointed prestable curves together with some additional structure (see the paragraph below Corollary \ref{Cor:modinterpretunivcurve} for more details).

\subsection*{Tautological rings of stacks of prestable curves}
Having solved both the issues with proper pushforwards and the modular interpretation of the universal curve, we are now ready to define the tautological rings. Since the discussion in the last section shows that the spaces $\M_{g,n,a}$ appear naturally, we will in fact define the tautological rings for these spaces and obtain the rings for $\M_{g,n}$ by restriction. To write down the definition, we note that in addition to the forgetful maps 
\begin{equation} \label{eqn:Avalforget}
    \pi : \M_{g,n+1,a} \to \M_{g,n,a}
\end{equation}
mentioned above, there also exist gluing maps
\begin{equation} \label{eqn:Avalgluing}
    \xi_\Gamma : \M_\Gamma = \prod_{v \in V(\Gamma)} \M_{g(v),n(v),a(v)} \to \M_{g,n,a}
\end{equation}
for every prestable graph $\Gamma$ together with an $\A$-valuation $a: V(\Gamma) \to \A$ satisfying $\sum_{v \in V(\Gamma)} a(v) = a$. {Here $V(\Gamma)$ is the set of vertices of the graph $\Gamma$.}
\begin{definition} \label{Def:tautringintro}
The tautological rings $(\R^*(\M_{g,n,a}))_{g,n,a}$ are defined as the smallest system of $\QQ$-subalgebras with unit of the Chow rings $(\CH^*(\M_{g,n,a}))_{g,n,a}$ closed under taking pushforwards by the natural forgetful and gluing maps \eqref{eqn:Avalforget} and \eqref{eqn:Avalgluing}.

The tautological ring $\R^*(\M_{g,n}) \subseteq \CH^*(\M_{g,n})$ is defined as the image of the restriction of $\R^*(\M_{g,n,\one})$ to the open substack $\M_{g,n} \subseteq \M_{g,n,\one}$ from Proposition \ref{Pro:modinterpretintro}.
\end{definition}
Just as for the moduli spaces of stable curves, we define $\psi$ and $\kappa$-classes: given $1 \leq i \leq n$ we set
\[
\psi_i = c_1(\sigma_i^* \omega_\pi) \in \CH^1(\M_{g,n,a})\,,
\]
where $\sigma_i : \M_{g,n,a} \to \M_{g,n+1,a}$ is the $i$-th universal section and $\omega_\pi$ is the relative dualizing sheaf of $\pi$. Similarly, given $m \geq 0$ we set
\[
\kappa_m = \pi_*\left(\psi_{n+1}^{m+1} \right) \in \CH^m(\M_{g,n,a})\,.
\]
It is easy to see that both types of classes are in fact tautological.
Given any $\A$-valued prestable graph $\Gamma$, consider the products
\begin{equation} %\label{eqn:alphadecoration}
\alpha = \prod_{v\in V} \left( \prod_{i \in H(v)} \psi_{v,i}^{a_i} \prod_{a=1}^{m_v} \kappa_{v,a}^{b_{v,a}} \right)\in \CH^*(\M_\Gamma).\end{equation}
of $\psi$ and $\kappa$-classes on the space $\M_\Gamma$ above. Then we define the \emph{decorated stratum class} $[\Gamma,\alpha]$ as the pushforward
\[[\Gamma,\alpha] = (\xi_\Gamma)_* \alpha \in \R^*(\M_{g,n,a}).\]
The following result (generalizing \cite[Proposition 11]{graberpandh}) show that these classes additively generate the tautological rings.
\begin{theorem} \label{Thm:tautringintro}%[see Corollary \ref{Cor:tautproduct}]
The tautological ring $\R^*(\M_{g,n,a})$ is generated as a $\QQ$-vector space by the decorated strata classes $[\Gamma, \alpha]$. In addition to being closed under pushforwards by gluing and forgetful maps, the tautological rings are likewise closed under pullbacks by these maps, with explicit formulas describing all these operations on the generators $[\Gamma,\alpha]$.\footnote{For the precise statement and formulas from the theorem, we refer the reader to Section \ref{calculus}.}
\end{theorem}
This result gives an effective method to perform computations in the Chow rings of the stacks $\M_{g,n,a}$. Moreover, it shows that while both the {Chow} and the tautological rings of these stacks are in general infinite-dimensional, the individual graded pieces of $\R^*(\M_{g,n,a})$ always have a finite set of generators.

\subsection*{Relations to other work}
In this section we explain how our results relate to previous results on the intersection theory of the stacks $\M_{g,n}$.

As a first example, in \cite{gathmann} Gathmann used the pullback formula of $\psi$-classes along the stabilization morphism $\st\colon \M_{g,1}\to \Mbar_{g,1}$ to prove certain properties of the Gromov-Witten potential. In Section \ref{calculus} we compute arbitrary pullbacks of tautological classes under the stabilization map, in particular recovering Gathmann's result.

In \cite{oesinghaus}, Oesinghaus computed the Chow rings of the open locus $\mathcal{T} \subset \M_{0,3}$ of curves with dual graph of the shape
\begin{equation*}
\begin{tikzpicture}[scale=1.2, baseline=-3pt,label distance=0.3cm,thick,
    virtnode/.style={circle,draw,scale=0.5}, 
    nonvirt node/.style={circle,draw,fill=black,scale=0.5} ]
    \node at (-6,0) [nonvirt node] (F) {};
    \node at (-4,0) [nonvirt node] (E) {};
    \node at (-3,0) [nonvirt node] (D) {};
    \node at (-1,0) [nonvirt node] (C) {};
    \node [nonvirt node] (A) {};
    \node at (2,0) [nonvirt node] (B) {};
    \draw [-] (A) to (B);
    \draw [-, dotted] (C) to (A);
    \draw [-] (D) to (C);
    \draw [-, dotted] (E) to (D);
    \draw [-] (F) to (E);
    \node at (2.7,.5) (m1) {$1$};
    \draw [-] (B) to (m1);
    \node at (-6.7,.5) (n1) {$2$};
    \draw [-] (F) to (n1);
    \node at (-6.7,-.5) (n2) {$3$};
    \draw [-] (F) to (n2);    
    \end{tikzpicture}
\end{equation*}
This stack has a natural interpretation as the stack of \emph{expanded pairs} appearing in \cite{expandedpairs}. Oesinghaus showed that the Chow ring of $\mathcal{T}$ is given by the known algebra of \emph{quasi-symmetric functions} $\mathrm{QSym}$ (see \cite{quasisymmetric} for an overview). The ring $\mathrm{QSym}$ has a natural basis $M_J$ (as a $\QQ$-vector space) indexed by positive integer vectors $J = (j_1, \ldots, j_k) \in \mathbb{Z}_{\geq 1}^{k}$ of some length $k \geq 0$, and the product $M_J \cdot M_{J'}$ can be defined in terms of a certain \emph{shuffle rule} on the vectors $J, J'$ (see \cite[Proposition 2]{oesinghaus}).

Oesinghaus' proof worked by writing down an open exhaustion of $\mathcal{T}$ by quotient stacks, allowing to write the Chow ring as a certain projective limit of polynomial rings which is known to produce the algebra $\mathrm{QSym}$. However, due to the nature of this proof, a geometric interpretation for the generators $M_J$ was not immediately clear (see \cite[Remark 7]{oesinghaus}). Using the techniques of our paper, we can now answer this question, showing that the generators $M_J$ have a concrete interpretation as tautological classes.
\begin{proposition}[see Example \ref{Exa:Oesinghaus}]
For $J = (j_1, \ldots, j_k) \in \mathbb{Z}_{\geq 1}^{k}$, the generator $M_J \in \mathrm{QSym} \cong \CH^*(\mathcal{T})$ is given by the restriction of the tautological class
\begin{equation} \label{eqn:Oesinggenintro}
\begin{tikzpicture}[scale=1.2, baseline=-3pt,label distance=0.3cm,thick,
    virtnode/.style={circle,draw,scale=0.5}, 
    nonvirt node/.style={circle,draw,fill=black,scale=0.5} ]
    \node at (-6,0) [nonvirt node] (F) {};
    \node at (-4,0) [nonvirt node] (E) {};
    \node at (-3,0) [nonvirt node] (D) {};
    \node at (-1,0) [nonvirt node] (C) {};
    \node [nonvirt node] (A) {};
    \node at (2,0) [nonvirt node] (B) {};
    \draw [-] (A) to (B);
    \draw [-, dotted] (C) to (A);
    \draw [-] (D) to (C);
    \draw [-, dotted] (E) to (D);
    \draw [-] (F) to (E);
    \node at (-5,.3) {$(-\psi - \psi')^{j_1-1}$};
    %\node at (-4.3,.3) {$\psi_{h'_1}^{a'_1}$};
    \node at (-2,.3) {$(-\psi - \psi')^{j_\ell-1}$};
    %\node at (-1.3,.2) {$\scriptstyle h'_{2i}$};
    \node at (1,.3) {$(-\psi - \psi')^{j_k-1}$};
    %\node at (1.6,.3) {$\psi_{h'_{2l-1}}^{a'_{2l-1}}$};
    %\node at (-.7,.5) (n1) {$i$};
    %\draw [-] (A) to (n1);
    %\node at (0,.7) (n2) {$\downarrow$};
    \node at (2.7,.5) (m1) {$1$};
    \draw [-] (B) to (m1);
    \node at (-6.7,.5) (n1) {$2$};
    \draw [-] (F) to (n1);
    \node at (-6.7,-.5) (n2) {$3$};
    \draw [-] (F) to (n2);    
    \end{tikzpicture}
\end{equation}
on $\M_{0,3}$.
\end{proposition}
Furthermore, it is straightforward to see that the shuffle rule describing products $M_J \cdot M_{J'}$ is an immediate consequence of the product formula for the tautological classes \eqref{eqn:Oesinggenintro}.
Oesinghaus also computes the Chow rings of the loci $\M_{0,2}^\textup{ss}$ and $\M_{0,3}^\textup{ss}$ of semistable curves in $\M_{0,2}$ and $\M_{0,3}$, giving a description in terms of tensor products involving the rings $\mathrm{QSym}$. Again we give a description in terms of tautological classes in Example \ref{Exa:Oesinghaus}.

\subsection*{Tautological relations in genus zero}
The present paper lays down the foundations of the theory of the Chow rings $\CH^*(\M_{g,n})$. In the second part \cite{BaeSchmitt2} we use results of this paper to fully determine the Chow rings of $\M_{0,n}$ for all $n$: we prove that all classes are tautological and we compute all relations among generators of the tautological ring.
\subsection*{Structure of the paper}
In Section \ref{sec:calculus} we establish basic properties of the stacks $\M_{g,n}$. We discuss boundary gluing maps in Section \ref{Sect:gluingmaps}, introduce the stacks of prestable curves with values in a semigroup in Section \ref{Sect:Avaluedprestable}. In Section \ref{Sect:ChowandTaut} we establish basic properties of the Chow group of $\M_{g,n}$. In Section \ref{sec:definitions} we define Chow groups and tautological rings of such stacks. In Section \ref{calculus} we compute formulas for intersection products and pullbacks and pushforwards of tautological classes under natural maps. In Section  \ref{sec:previous} we compare our result with previous works by Gathmann \cite{gathmann}, Pixton \cite{Pixtonboundary} and Oesinghaus \cite{oesinghaus}.

In Appendix \ref{Sect:Chowlocfintype} we give some general treatment of Chow groups of locally finite type algebraic stacks. We give a definition of such Chow groups based on \cite{kreschartin} and show various basic properties. In Appendix \ref{sec:Properpushforward} (joint with J. Skowera) we construct  proper pushforwards of cycles, show basic compatibility properties of these pushforwards and explain how they extend to the setting of algebraic stacks locally of finite type. Finally, in Appendix \ref{Sect:OperationalChow} we give a definition and establish basic properties of operational Chow groups on locally finite type stacks, a technical tool needed for some of the computations in Section \ref{calculus}.
\subsection*{Acknowledgements}
We are indebted to Andrew Kresch for many stimulating conversations and considerable help and input concerning Chow groups of algebraic stacks.  We are also grateful to Jakob Oesinghaus, Rahul Pandharipande and Rachel Webb for many interesting conversations.
Finally, we want to thank the anonymous referee for an exceptionally thorough reading of the first version of this paper and numerous comments and suggestions that have helped to both improve the presentation as well as to clarify several technical issues. 
\section{The stack \texorpdfstring{$\M_{g,n}$}{Mgn} of prestable curves}\label{sec:calculus}
Throughout the paper we work over an arbitrary base field $k$.
Let $\M_{g,n}$ be the moduli stack of prestable curves of genus $g$ with $n$ marked points.  An object of $\M_{g,n}$ over a scheme $S$ is a tuple
\begin{equation*}
    (\pi:C\to S, \,\,\, p_1,\ldots,p_n: S\to C)\,,
\end{equation*}
where $C$ is an algebraic space, the map $\pi$ is a flat, proper morphism of finite presentation and relative dimension $1$. The geometric fibers of $\pi$ are connected, reduced curves of arithmetic genus $g$ with at worst nodal singularities. The morphisms $p_1, \ldots, p_n$ are disjoint sections of $\pi$ with image in the smooth locus of $\pi$, see \cite[0E6S]{stacks-project}. %\jocomment{I rearranged the definition (but I think I kept all the parts). We should verify that this agrees with the stacks project. Isn't 0E6S the more appropriate tag here?}\Ycomment{Yes, I changed to 0E6S. In order to make it precise, we have to say what is the morphism between objects, etc, but I guess this level of accuracy is enough?}

This stack is quasi-separated, smooth and locally of finite type over $k$ (\cite{kreschversal}) and of dimension $3g-3+n$ (\cite{behrendGW}). 
For $2g-2+n>0$ there is a natural stabilization morphism
\begin{equation*}
    \st: \M_{g,n} \to \Mbar_{g,n}
\end{equation*}
which contracts unstable rational components. This morphism is flat by \cite[Proposition 3]{behrendGW}. 

\subsection{Boundary gluing maps} \label{Sect:gluingmaps}
%\Ycomment{Referee wants to modify the exposition of the prestable graph. I think this definition is standard in the literature [JPPZ, PPZ, ...]. Maybe should leave it as it is?}
%\jocomment{I would indeed leave the L and E as they are; I added the formula for the first Betti number.}
A prestable graph $\Gamma$ of genus $g$ with $n $ markings consists of the data
$$\Gamma=(V,\, H,\, \ell: L \to \{1, \ldots, n\} , \ g \colon V \to \mathbb{Z}_{\geq 0}\, ,
\ v\colon H \to V\, , 
\ \iota : H\to H)$$
satisfying the properties:
\begin{enumerate}
\item[(i)] $V$ is a vertex set with a genus function $g\colon V\to \mathbb{Z}_{\geq 0}$,
\item[(ii)] $H$ is a half-edge set equipped with a vertex assignment $v:H \to V$ and an involution $\iota$,
\item[(iii)] $E$, the edge set, is defined by the 2-cycles of $\iota$ in $H$ (self-edges at vertices are permitted),
\item[(iv)] $L \subseteq H$, the set of legs, is defined by the fixed points of $\iota$ and corresponds to $n$ markings via the bijection $\ell : L \to \{1, \ldots, n\}$,
\item[(v)] the pair $(V,E)$ defines a  connected graph satisfying the genus condition 
$$\sum_{v \in V} g(v) + h^1(\Gamma) = g\,,$$
where $h^1(\Gamma) = |E| - |V| + 1$ is the first Betti number of the graph $\Gamma$.
\end{enumerate}
%\jocomment{I rephrased the paragraph below slightly:}
{A prestable graph $\Gamma$ is called \textit{stable} if the following additional condition is satisfied:
\begin{enumerate}
    \item[(vi)] for each vertex $w\in V$, we have 
    \[2g(w)-2+n(w)>0\,,\]
    where $n(w) = |v^{-1}(w)|$ is the valence of $w$ in $\Gamma$, i.e. the number of half-edges incident to $w$.
\end{enumerate}
}
Given a second graph $\Gamma' = (V',H', \ell',g',v',\iota')$ an \emph{isomorphism} $\varphi: \Gamma \to \Gamma'$ is the data of bijective maps
\[\varphi_V : V \to V'\,,\hspace{2mm} \varphi_H : H \to H'\]
which are compatible with the remaining data of the prestable graphs, in the sense that
\[
\ell' \circ \varphi_H|_{L} = \ell\,, \, g' \circ \varphi_V = g\,, \, v' \circ \varphi_H = \varphi_V \circ v\,, \, \iota' \circ \varphi_H = \varphi_H \circ \iota\,.
\]

For every vertex $v \in V(\Gamma)$ let $H(v)$ be the set of half-edges at $v$, with cardinality $n(v)$. Then there exists a natural gluing morphism
\begin{align*}
 \xi_\Gamma : \M_\Gamma=\prod_{v \in V(\Gamma)} \M_{g(v), n(v)} &\to \M_{g,n}\,,
\end{align*}
which assigns to a collection $((C_v, (p_h)_{h \in H(v)})$ the curve $(C,p_1, \ldots, p_n)$ obtained by identifying the markings $p_h, p_{h'}$ for each pair $(h,h')$ forming an edge of $\Gamma$.
\footnote{Note that while it is customary in the field to write the factors of the domain of $\xi_\Gamma$ as $\M_{g(v),n(v)}$, it would maybe be more appropriate to define prestable curves with markings indexed by the set $H(v)$ and write $\M_{g(v),H(v)}$, since otherwise we need to implicitly choose an ordering on the half-edges at $v$ to define the map $\xi_\Gamma$. This does not affect the arguments presented below and the reader may assume that an arbitrary such an ordering is chosen.}
Restricting to the preimage of the open substack $\Mbar_{g,n} \subset \M_{g,n}$ we get the usual gluing maps
\begin{align*}
 \xi_\Gamma : \Mbar_\Gamma=\prod_{v \in V(\Gamma)} \Mbar_{g(v), n(v)} &\to \Mbar_{g,n}\,,
\end{align*}
Note that unless $\Gamma$ is stable, the left hand side is empty.

On the other hand, given $m \geq 0$ we have the forgetful morphism
\begin{align*}
 F_m : \M_{g,n+m} \to \M_{g,n}, (C,p_1, \ldots, p_n, q_1, \ldots, q_m) \mapsto (C,p_1, \ldots, p_n)\,.
\end{align*}
Since the curve $C$ remains prestable after forgetting a subset of the markings, there is no stabilization procedure in the morphism $F_m$ and the underlying curve remains unchanged.
\begin{lemma} \label{Lem:forgetfulmorphism}
The morphism $F_m$ is smooth and representable of relative dimension $m$ and
the collection 
\begin{equation*}
    \left(F_m|_{\Mbar_{g,n+m}} : \Mbar_{g,n+m} \to \M_{g,n}\right)_{m \in \mathbb{Z}_{\geq 0}}
\end{equation*}
forms a smooth and representable cover of $\M_{g,n}$. 
%\jocomment{Previously we wrote "atlas" here, but We should be careful what we mean by atlas, since the $\Mbar_{g,n+m}$ are not schemes. If you are happy with the reformulation, you can delete this comment.}
The complement of the image of $\Mbar_{g,n+m}$ under $F_m$ in $\M_{g,n}$ has codimension $\lfloor \frac{m}{2} \rfloor +1$, except for finitely many $m$ in the unstable setting $2g-2+n \leq 0$.\footnote{The exceptions occur for $(g,n,m)=(1,0,0)$ and for $g=0, n \leq 2, m \leq 4$. We leave it as an exercise to the reader to work out the codimension in these cases.}
% \begin{equation*}
%     \mathrm{codim}(\M_{g,n} \setminus F_m(\Mbar_{g,n+m})) = \begin{cases}
%     \lfloor \frac{m}{2} \rfloor +1&\text{for }g>0\text{ or }g=0, n \geq 3
%     \lfloor \frac{m+n-3}{2} \rfloor &\text{for }g = 0, n \leq 2.
%     \end{cases}
% \end{equation*}
%\jocomment{The behaviour for $g=0, n \leq 2, m \leq 4$ is kind of annoying to describe (essentially one should give a table), so I put things in a footnote.} 
\end{lemma}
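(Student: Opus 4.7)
My plan is to treat the three claims (smoothness and representability of $F_m$, joint surjectivity of the $F_m|_{\Mbar_{g,n+m}}$, and the codimension of the complement) in turn.

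For the first claim, I would factor $F_m$ as an $m$-fold composition of $F_1$'s and analyze the single forgetful morphism $F_1 \colon \M_{g,n+1} \to \M_{g,n}$. Over a fixed prestable curve $(C, p_1, \ldots, p_n)$, an additional marked point $p_{n+1}$ is the same data as a point of the open subscheme $C^\circ \subset C$ obtained by removing the relative nodes and the sections $p_1, \ldots, p_n$. This identifies $\M_{g,n+1}$ with the open substack of the universal curve $\C_{g,n}$ complementary to the relative nodes and the $n$ universal sections. Since $\C_{g,n} \to \M_{g,n}$ is a representable, flat morphism of relative dimension $1$, smooth away from the nodes, its restriction $F_1$ is smooth and representable of relative dimension $1$, and $F_m$ inherits these properties by composition.

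Smoothness and representability of $F_m|_{\Mbar_{g,n+m}}$ follow since $\Mbar_{g,n+m} \subset \M_{g,n+m}$ is open. For joint surjectivity, given any $(C, p_1, \ldots, p_n) \in \M_{g,n}$ with dual graph $\Gamma$, I would define $s(v) = \max(0, 3-2g(v)-n(v))$ (with the obvious modification for $g(v)=1,\,n(v)=0$) as the number of marked points required to stabilize the vertex $v$. Distributing $s(v)$ new points on each unstable component shows $(C, p_1, \ldots, p_n)$ lies in the image of $F_k|_{\Mbar_{g,n+k}}$ for $k = \sum_v s(v)$, so the whole family jointly covers $\M_{g,n}$.

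For the codimension, observe that the image of $F_m|_{\Mbar_{g,n+m}}$ is the union of strata $\M^\Gamma$ with $\sum_v s(v) \leq m$; the complement is therefore $\bigcup_{\Gamma: \sum_v s(v) \geq m+1} \M^\Gamma$, and since $\M^\Gamma$ has codimension $|E(\Gamma)|$ in $\M_{g,n}$, the codimension of the complement equals $\min \{|E(\Gamma)| : \sum_v s(v) \geq m+1\}$. For the upper bound, I would exhibit an explicit extremal graph: attach $k = \lfloor m/2 \rfloor + 1$ rational tails (genus-zero vertices with a single edge to the main vertex and no legs, so $s = 2$) to the generic stable vertex by $k$ distinct edges. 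This gives $|E(\Gamma)| = k$ and $\sum_v s(v) = 2k \geq m+1$, while the main vertex remains stable throughout as long as $2g-2+n > 0$.

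The main obstacle is the matching lower bound $\sum_v s(v) \leq 2|E(\Gamma)|$ in the stable case, which I would prove by a discharging argument. Every unstable vertex of a connected graph with at least two vertices satisfies $g(v) = 0$ and $n(v) \in \{1, 2\}$ (``type 1'' with $s(v)=2$ and graph-degree $1$, or ``type 2'' with $s(v)=1$ and graph-degree $\leq 2$). Each type-1 leaf assigns its charge of $2$ to its unique incident edge; a key observation is that no edge of $\Gamma$ can join two type-1 leaves, since such an edge would constitute an isolated component of type $(g,n)=(0,0)$ contradicting the stability hypothesis. Each type-2 vertex then assigns its charge of $1$ to some incident edge not yet saturated by a type-1 leaf, and a parallel case analysis (type-2 vertices cannot have all incident edges going to type-1 leaves without producing an isolated, unstable component) shows such an edge must exist. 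Each edge thereby accumulates total charge at most $2$, yielding $\sum_v s(v) \leq 2|E(\Gamma)|$ and hence $|E(\Gamma)| \geq \lceil (m+1)/2 \rceil = \lfloor m/2 \rfloor + 1$. The finitely many exceptions in the unstable setting $2g-2+n \leq 0$ correspond precisely to small values of $m$ where the main vertex of the extremal construction does not yet have enough incident half-edges to be stable.
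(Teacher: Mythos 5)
Your proposal is correct, and for the heart of the lemma (the codimension count) it follows the same route as the paper: both arguments rest on the extremal prestable graph consisting of a central stable vertex carrying all $n$ legs and $c$ unmarked rational tails, which lies in a codimension-$c$ stratum and needs exactly $2c$ extra markings to stabilize, giving codimension $\lfloor m/2\rfloor+1$ for the complement. The differences lie in the other ingredients. For smoothness and representability of $F_m$ the paper simply cites Behrend's Proposition 2, whereas you give a direct argument by identifying $F_1$ with the open complement of the sections and nodes in the universal curve $\C_{g,n}\to\M_{g,n}$ and composing; this is consistent with the identification $\M_{g,n+1}\subset\C_{g,n}$ that the paper itself invokes in Section 2.3, so it is a legitimate self-contained substitute. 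For the lower bound the paper only records that each unstable vertex needs at most two extra legs; to deduce $\sum_v s(v)\le 2|E(\Gamma)|$ one must additionally bound the number of unstable vertices by $|E(\Gamma)|$, which holds in the stable range because a connected prestable graph all of whose vertices are unstable forces $2g-2+n\le 0$, so at least one vertex is stable and $\#\{\text{unstable }v\}\le |V|-1\le|E|$. Your discharging argument establishes the same inequality by a local edge-by-edge count, and it is correct (the key exclusions, namely an edge joining two type-1 leaves or a type-2 vertex all of whose neighbours are type-1 leaves, do force the entire connected graph to be of unstable type), though the global count above is shorter. One small caveat: the exceptional values of $m$ in the unstable range are not only those where the central vertex of the extremal graph fails to be stable; they also arise from failures of the lower bound itself (for $g=n=0$ and $m=4$, the chain of three rational components has two edges but needs five markings, so the complement has codimension $2$ rather than $3$). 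Since the lemma relegates the exceptional cases to a footnote exercise this does not affect your proof, but your closing sentence slightly misattributes the source of the exceptions --- as, to be fair, does the paper's.
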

\begin{proof}
Except for the statement about the codimension of the complement of the image, this is \cite[Proposition 2]{behrendGW}. To show the formula for the codimension, observe on the one hand that in a prestable graph $\Gamma$, every unstable vertex can be stabilized by adding at most two legs. Conversely, consider the prestable graph $\Gamma_0$ formed by a central vertex of genus $g$ with all $n$ legs, connected via single edges to $c$ outlying vertices of genus $0$ with no legs. Then $\Gamma_0$ belongs to a codimension $c$ stratum and we need precisely $2c$ additional legs to stabilize it. Thus the stratum of $\M_{g,n}$ associated to $\Gamma_0$ lies in the complement of $F(\Mbar_{g,n+m})$ if and only if $c\geq \lfloor \frac{m}{2} \rfloor +1$. The finitely many exceptions in the unstable range arise from the fact that the central vertex of $\Gamma_0$ is not stable if $2g-2+n+c<0$.
\end{proof}

Let $\st_m(\Gamma)$ be the set of stable graphs $\Gamma'$ in genus $g$ with $n+m$ markings obtained from a prestable graph $\Gamma$ of genus $g$ with $n$ legs by adding $m$ additional legs, labeled $n+1, \ldots, n+m$, at vertices of $\Gamma$. As explained above, for a fixed prestable graph $\Gamma$ the set $\st_m(\Gamma)$ starts being nonempty for $m$ sufficiently large.

Given $\Gamma' \in \st_m(\Gamma)$ there is a natural map
\[F_{\Gamma' \to \Gamma} : \Mbar_{\Gamma'} \to \M_\Gamma\]
which is just the product of forgetful maps $F_{m_v} : \Mbar_{g(v),n(v)+m_v} \to \M_{g(v),n(v)}$ for each $v \in V(\Gamma)=V(\Gamma')$.

\begin{lemma} \label{Lem:gluingmaps}
For every prestable graph $\Gamma$ in genus $g$ with $n$ markings and every $m \geq 0$ there is a fibre diagram
\begin{equation} \label{eqn:gluingpullbackdiag}
 \begin{tikzcd} 
 \coprod_{\Gamma' \in \st_m(\Gamma)} \Mbar_{\Gamma'} \arrow[r,"\coprod \xi_{\Gamma'}"] \arrow[d,"\coprod F_{\Gamma' \to \Gamma}"] & \Mbar_{g,n+m} \arrow[d,"F_m"]\\
 \M_\Gamma \arrow[r,"\xi_\Gamma"] & \M_{g,n}\,.
 \end{tikzcd}
\end{equation}
In particular the map $\xi_\Gamma: \M_\Gamma \to \M_{g,n}$ is representable, proper and a local complete intersection.
\end{lemma}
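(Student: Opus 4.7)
I would prove the lemma by first establishing the fibre diagram (\ref{eqn:gluingpullbackdiag}) through a direct moduli-theoretic argument, and then deducing representability, properness and the lci property of $\xi_\Gamma$ by smooth descent along the cover of $\M_{g,n}$ provided by Lemma \ref{Lem:forgetfulmorphism}.

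For the fibre-square, I would unpack an $S$-valued point of $\M_\Gamma \times_{\M_{g,n}} \Mbar_{g,n+m}$ as a triple consisting of a family $(C_v, (p_h)_{h\in H(v)})_{v\in V(\Gamma)} \in \M_\Gamma(S)$, a stable curve $(C',p_1',\dots,p_{n+m}') \in \Mbar_{g,n+m}(S)$ and an isomorphism in $\M_{g,n}(S)$ between the glued curve $\xi_\Gamma((C_v)_v)$ and $(C',p_1',\dots,p_n')$. The key geometric observation is that the smooth locus of $\xi_\Gamma((C_v)_v) \to S$ decomposes as an $S$-scheme into the disjoint union
\[
\bigsqcup_{v \in V(\Gamma)} \bigl( C_v^{\mathrm{sm}} \setminus \{\text{gluing sections on } C_v\} \bigr),
\]
because points on distinct components $C_v, C_{v'}$ of the glued curve can only be identified at the gluing nodes, which are then removed. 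Hence each extra section $p_{n+j}'$ factors uniquely, on connected components of $S$, through one of these open pieces, yielding an assignment of the new legs $\{n+1,\dots,n+m\}$ to vertices of $\Gamma$. Stability of $(C',p'_\bullet)$ translates into stability of each $(C_v, (p_h)_h \cup \{p_{n+j}' : v(n+j)=v\})$, so the collection lies in $\Mbar_{\Gamma'}(S)$ for exactly one $\Gamma' \in \st_m(\Gamma)$ on each connected component of $S$. The inverse construction is the obvious one: given an object of $\Mbar_{\Gamma'}$, I forget the added markings at each vertex to obtain a point of $\M_\Gamma$ and glue everything together to obtain a point of $\Mbar_{g,n+m}$; these two constructions are manifestly mutually inverse and natural in $S$.

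For the ``in particular'' statement, I would use that $\bigsqcup_{m \geq 0} F_m : \bigsqcup_m \Mbar_{g,n+m} \to \M_{g,n}$ is smooth and surjective by Lemma \ref{Lem:forgetfulmorphism} and hence a smooth cover of the target. Representability, properness and the lci property can all be verified smooth-locally on the target, so it is enough to check them for the pullback $\bigsqcup_m \coprod_{\Gamma' \in \st_m(\Gamma)} \xi_{\Gamma'}$. Each classical boundary gluing map $\xi_{\Gamma'} : \Mbar_{\Gamma'} \to \Mbar_{g,n+m}$ is representable and finite (hence proper), and lci because source and target are both smooth Deligne--Mumford stacks; taking disjoint unions preserves all three properties, which then descend to $\xi_\Gamma$.

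The main technical point I expect to be delicate is the decomposition of the smooth locus of the glued curve recorded above: it is the precise reason the combinatorics of $\st_m(\Gamma)$ appears, since adding $m$ smooth sections to the glued curve is equivalent to distributing $m$ legs among the vertices of $\Gamma$. Everything else (the construction of the inverse functor, compatibility of the two squares up to $2$-isomorphism, and the smooth descent of representability, properness and lci) is formal once this observation is in place.
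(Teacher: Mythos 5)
Your proposal is correct and follows essentially the same route as the paper: the identification of an $S$-point of the fibre product as a gluing datum plus a stable curve with $m$ extra sections, the observation that the smooth unmarked loci of the components $C_v$ form a disjoint open cover of the smooth unmarked locus of the glued curve (forcing each new section to land on a unique vertex on connected components of $S$), and the deduction of representability, properness and the lci property by smooth descent along the cover $(F_m)_m$ from the corresponding properties of the stable gluing maps $\xi_{\Gamma'}$. No gaps to report.
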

\begin{proof}
An object of the fibre product of $\M_\Gamma$ with $\Mbar_{g,n+m}$ over a (connected) scheme $S$ is given by
\begin{itemize}
    \item a collection of families $(C_v, (p_h)_{h \in H(v)})$ of prestable curves over $S$ for each $v \in V(\Gamma)$,
    \item a family $(C',p'_1, \ldots, p'_n, q'_1, \ldots, q'_m)$ of stable curves over $S$,
    \item an isomorphism (of families of prestable curves) 
    \[\varphi: C=\coprod_v C_v / (p_h \sim p_{h'}, (h,h') \in E(\Gamma)) \to C'\]
    satisfying $\varphi(p_i)=p_i'$. 
\end{itemize}
By the assumption that $S$ is connected, for each $j=1, \ldots, m$ there exists a unique $v=v(j) \in V(\Gamma)$ such that $q'_j \in \varphi(C_v)$ at each point of $S$. This uses that via $\varphi$, the smooth unmarked points of $C_v$ ($v \in V(\Gamma)$) form a disjoint open cover of the smooth unmarked points of $(C',p_1, \ldots, p_n)$ in which $q'_j$ is always contained.

But for $j,v$ as above, we obtain a section $q_j=\varphi^{-1} \circ q_j' : S \to C_v$ landing in the smooth unmarked locus of $C_v$. Thus for every $v \in V(\Gamma)$ this allows us to define a family
\begin{equation} \label{eqn:Cvstable}\widehat{C}_v=(C_v, (p_h)_{h \in H(v)}, (q_j)_{v(j)=v}) \to S\end{equation}
of prestable curves over $S$. From the fact that $(C',p'_1, \ldots, p'_n,q'_1, \ldots, q'_m)$ is a family of stable curves, it follows that the family (\ref{eqn:Cvstable}) is actually a family of stable curves. Then one sees that the collection $({\widehat{C}}_v)_{v \in V(\Gamma')}$ is exactly an $S$-point of one of the spaces $\Mbar_{\Gamma'}$ for the suitable $\Gamma' \in \st_m(\Gamma)$ for which the marking $q_j$ is added at the vertex $v(j) \in V(\Gamma')=V(\Gamma)$.

The above operations defines a map from $\M_\Gamma \times_{\M_{g,n}} \Mbar_{g,n+m}$ to the disjoint union of the $\Mbar_{\Gamma'}$ and clearly this disjoint union also maps to the fibre product using the maps $F_{\Gamma' \to \Gamma}$ and $\xi_{\Gamma'}$. One verifies that these are inverse isomorphisms.

Since being proper and being a local complete intersection is local on the target %https://stacks.math.columbia.edu/tag/0CHM
%https://stacks.math.columbia.edu/tag/02L1
and since the maps $F_m$ form a smooth cover of $\M_{g,n}$, these properties of $\xi_{\Gamma}$ follow from the corresponding properties of the maps $\xi_{\Gamma'}|_{\Mbar_{\Gamma'}}$.
\end{proof}

%\jocomment{I added an approach to $\M^\Gamma$ that is hopefully more rigorous (though less conceptual). I think an alternative would be to say: a family of prestable curves over $S$ is said to have prestable graph $\Gamma$ if, after an etale base change $S' \to S$ the resulting map $S' \to \M_{g,n}$ factors through $\xi_\Gamma$. Then $\M^\Gamma$ is the full subcategory of $\M_{g,n}$ of curves with prestable graph $\Gamma$.  This is more conceptual (and geometric points are clear), but it is less clear to me that this is a locally closed substack. Maybe you can read below and say what you think.}\Ycomment{Great. I think the point is well resolved. }
Later we will need some stronger statements about the locus of curves whose prestable graph is exactly a given graph $\Gamma$.
This locus is a locally closed substack $\M^\Gamma$ of $\M_{g,n}$ whose geometric points are precisely the curves $(C,p_1, \ldots, p_n)$ with prestable graph isomorphic to $\Gamma$. 
However, since a family of prestable curves over an arbitrary base does not in general have a well-defined prestable graph, this definition is slightly tricky to write down in a functorial way. Thus we approach the definition from a different angle and then show that it defines the desired locus.
\begin{definition} \label{Def:Gammalocus}
Let $\Gamma$ be a prestable graph in genus $g$ with $n$ markings and let $e$ be the number of edges of $\Gamma$. Then we define
\[\M^\Gamma = \mathrm{im}(\xi_\Gamma) \setminus \bigcup_{\Gamma': |E(\Gamma')|=e+1} \mathrm{im}(\xi_{\Gamma'}),\]
where $\mathrm{im}$ denotes the stack theoretic image and the union goes over prestable graphs $\Gamma'$ with precisely $e+1$ edges.
% Then the full subcategory
% \[\M^\Gamma = \{(C,p_1, \ldots, p_n) \in \M_{g,n} : \Gamma(C,p_1, \ldots, p_n) \cong \Gamma\} \subset \M_{g,n}\]
% of stable curves with prestable graph $\Gamma$ is a locally closed substack of $\M_{g,n}$.
\end{definition}
By definition, we have that $\M^\Gamma$ is a locally closed substack of $\M_{g,n}$. In the following lemma, we check that its geometric points are as desired.
\begin{lemma} \label{Lem:MGamma}
The geometric points of $\M^\Gamma$ are precisely the $(C,p_1, \ldots, p_n)$ with prestable graph isomorphic to $\Gamma$.
\end{lemma}
\begin{proof}
First we note that since $\xi_\Gamma$ is proper, it is surjective onto its image. Then on the one hand each $(C,p_1, \ldots, p_n)$ with prestable graph isomorphic to $\Gamma$ is in $\M^\Gamma$, since it is in the image of $\xi_\Gamma$ but cannot be in the image of a gluing map for a graph $\Gamma'$ with more than $e$ edges (since its number of nodes is precisely $e$). Conversely, let $(C_v)_v = (C_v,(p_h)_{h\in H(v)})_{v \in V(\Gamma)} \in \M_\Gamma$ be a geometric point. Then if all $C_v$ are smooth, its image $\xi_\Gamma((C_v)_v)$ has prestable graph $\Gamma$. On the other hand, if any of the $C_v$ are not smooth, then the prestable graph of $\xi_\Gamma((C_v)_v)$ has at least $e+1$ edges. By contracting all but $e+1$ of them, we obtain one of the prestable graphs $\Gamma'$ in the definition of $\M^\Gamma$, and it is easy to see that $\xi_\Gamma((C_v)_v)$ is then in the image of $\xi_{\Gamma'}$.
\end{proof}

We have the following neat description of $\M^\Gamma$ which is a generalization of \cite[Lemma 5.1]{fulghesurational}. For the statement, let
\begin{equation*}
    \M_{g,n}^\textup{sm} \subset \M_{g,n}
\end{equation*}
be the open substack where the curve $C$ is smooth. For $g,n$ in the stable range, this is the usual stack $\mathcal{M}_{g,n}$ of smooth curves, but since the latter might be defined to be empty for $2g-2+n<0$ we use the notation $\M_{g,n}^\textup{sm}$ for clarity.
\begin{proposition} \label{Pro:MGamma}
For a prestable graph $\Gamma$, consider the open substack
\[\M_\Gamma^\textup{sm} = \prod_{v \in V(\Gamma)} \M_{g(v),n(v)}^\textup{sm} \subset \M_\Gamma. \]
Then the restriction of the gluing map $\xi_\Gamma$ to $\M_\Gamma^\textup{sm}$ factors through $\M^\Gamma$ and it is invariant under the natural action of $\aut(\Gamma)$. The induced map
\begin{equation} \label{eqn:xigammaquotient} \M_\Gamma^\textup{sm} / \aut(\Gamma) \xrightarrow{\xi_\Gamma} \M^\Gamma\end{equation}
from the quotient stack\footnote{Since $\M_\Gamma^\textup{sm}$ is not an algebraic space, one can either use the notion of group actions and quotients for algebraic stacks defined by Romagny \cite{romagny} to make sense of the quotient $\M_\Gamma^\textup{sm} / \aut(\Gamma)$ or one observes that $\M_\Gamma^\textup{sm}$ is itself a quotient stack and that the action of $\aut(\Gamma)$ can be lifted compatibly to write $\M_\Gamma^\textup{sm}/\aut(\Gamma)$ again as a quotient stack. See \cite[Section 5]{fulghesurational} for more details.} of $\M_\Gamma$ by $\aut(\Gamma)$ is an isomorphism.
\end{proposition}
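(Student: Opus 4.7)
The plan is to proceed in two main stages: first verify that $\xi_\Gamma|_{\M_\Gamma^\textup{sm}}$ factors through $\M^\Gamma$ and is $\aut(\Gamma)$-invariant, and then construct an inverse to the induced morphism on quotient stacks via partial normalization at nodes. The first part is essentially formal: gluing a tuple of smooth pointed curves $(C_v, (p_h)_{h \in H(v)})$ according to $\Gamma$ produces a nodal curve whose nodes correspond bijectively to $E(\Gamma)$ and whose dual graph equals $\Gamma$ on every geometric fibre, giving the factorization. For $\sigma \in \aut(\Gamma)$, relabeling vertices and half-edges produces a canonically isomorphic glued family, so $\xi_\Gamma \circ \sigma = \xi_\Gamma$ and the morphism descends to the quotient.

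For the inverse, given a family $(C \to S, p_1, \ldots, p_n) \in \M^\Gamma(S)$, I would form the partial normalization $\nu \colon \widetilde{C} \to C$ along the nodes. Then $\widetilde{C} \to S$ is a smooth family of pointed curves, carrying the original markings together with the preimages $\nu^{-1}(\text{nodes})$. Étale-locally on $S$, one can choose an isomorphism from the dual graph of the family to $\Gamma$; this partitions $\widetilde{C}$ into connected components indexed by $V(\Gamma)$ and labels the new markings by half-edges, producing an object of $\M_\Gamma^\textup{sm}(S)$. Two such choices differ by an element of $\aut(\Gamma)$, so the data assembles into an $\aut(\Gamma)$-torsor on $S$ equipped with an equivariant morphism to $\M_\Gamma^\textup{sm}$, which is precisely an object of the quotient stack $\M_\Gamma^\textup{sm}/\aut(\Gamma)$.

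Mutual inverseness will follow from standard facts about nodal curves: normalizing a glued curve recovers the original smooth components (up to the $\aut(\Gamma)$-ambiguity captured by the quotient), while gluing the normalization reconstructs the original family, since a prestable curve is determined by its normalization together with the pairing of preimages of nodes, data recorded exactly by $\Gamma$. The main obstacle is giving a rigorous meaning to the quotient $\M_\Gamma^\textup{sm}/\aut(\Gamma)$, as $\M_\Gamma^\textup{sm}$ is not of finite type; this is handled either through Romagny's theory of group actions on algebraic stacks, or by writing $\M_\Gamma^\textup{sm}$ explicitly as a quotient stack on which the $\aut(\Gamma)$-action lifts, as in the approach of \cite{fulghesurational}. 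Once this is set up, the computation above shows that the two constructions define mutually inverse morphisms between $\M_\Gamma^\textup{sm}/\aut(\Gamma)$ and $\M^\Gamma$.
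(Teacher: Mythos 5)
Your proposal is correct in outline, but it takes a genuinely different route from the paper. You construct an explicit quasi-inverse to $\xi_\Gamma$ by partial normalization: normalize the family along its nodes, observe that the sheaf of isomorphisms from the (locally constant) dual graph of the family to $\Gamma$ is an $\aut(\Gamma)$-torsor, and package this into an object of $[\M_\Gamma^\textup{sm}/\aut(\Gamma)]$. The paper never builds an inverse. Instead it shows that the induced map \eqref{eqn:xigammaquotient} is an isomorphism by verifying a list of properties of the morphism itself: it computes the stabilizer groups on both sides via the exact sequence
\[1\to\prod_{v\in V(\Gamma)} \aut(C_v,(p_h)_{h\in H(v)}) \to\aut(C,p_1,\ldots,p_n)\to \aut(\Gamma)_{(C_v)_v} \to 1\]
to get representability, checks bijectivity on geometric points and separatedness, and then proves \'etaleness by miracle flatness (both source and target are smooth and the fibres are zero-dimensional), concluding via the Stacks Project criterion that a representable, separated, \'etale, universally bijective morphism is an isomorphism. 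Your approach is more modular and gives a functor-of-points description of the inverse, at the cost of having to justify that partial normalization of the family is smooth over $S$ and commutes with base change --- this is where you implicitly use that you are over $\M^\Gamma$ and not its closure, so that the relative singular locus is finite \'etale over $S$ of constant degree $|E(\Gamma)|$ --- and of verifying mutual inverseness at the level of $2$-morphisms, which is exactly the content of the paper's exact sequence of automorphism groups. The paper's approach trades the descent bookkeeping for smoothness and flatness arguments on an atlas. Both are sound; if you write yours up, make the base-change compatibility of the normalization and the matching of automorphism groups explicit, since those are the points where the real work hides.
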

\begin{proof}
For each point $(C_v)_v = (C_v,(p_h)_{h\in H(v)})_{v \in V(\Gamma)} \in \M_\Gamma^\textup{sm}$, the stabilizer $\aut(\Gamma)_{(C_v)_v}$ under the action of $\aut(\Gamma)$ is the set of automorphisms of $\Gamma$ such that there exist compatible isomorphisms of the curves $(C_v,(p_h)_{h\in H(v)})$. 
The stabilizer group of $[(C_v)_v] \in \M_\Gamma^\textup{sm} / \aut(\Gamma)$ is then an extension of the product of the automorphism groups of the $(C_v,(p_h)_{h\in H(v)})$ by the group $\aut(\Gamma)_{(C_v)_v}$.

On the other hand, for the curve $(C,p_1,\ldots,p_n)$ obtained from $(C_v)_v$ by gluing and an element $\sigma \in \aut(\Gamma)_{(C_v)_v}$, the isomorphisms between the curves $C_v$ that are compatible with $\sigma$ can be glued to an automorphism of $(C,p_1,\ldots,p_n)$. From this it follows that there exists an exact sequence
\[1\to\prod_{v\in V(\Gamma)} \aut(C_v,(p_h)_{h\in H(v)}) \to\aut(C,p_1,\ldots,p_n)\to \aut(\Gamma)_{(C_v)_v} \to 1\,.\]
From this sequence we see that $\aut(C,p_1,\ldots,p_n)$ is precisely the group extension defining the stabilizer of $[(C_v)_v] \in \M_\Gamma^\textup{sm} / \aut(\Gamma)$ and hence $\xi_\Gamma$ induces an isomorphism of each stabilizer. Thus the morphism $\xi_\Gamma$ in \eqref{eqn:xigammaquotient} is representable. It is easy to check that it is bijective on geometric points and it is separated by similar argument as in Lemma~\ref{Lem:gluingmaps}. So by \cite[0DUD]{stacks-project} it is enough to show that $\xi_\Gamma$ is an \'etale morphism to conclude that it is an isomorphism. 

Consider the atlas $F_m$ restricted to $\M^{\Gamma}$. Since being \'etale is local on the target, it is enough to show that $\xi_\Gamma$ is \'etale on each atlas.  On each atlas, the dimension of the fiber is constantly zero. The domain of $\xi_\Gamma$ is smooth because it can be written as a quotient of a smooth algebraic space by a group scheme  (\cite[0DLS]{stacks-project}). Following a slight variation of the proof of \cite[Proposition 10.11]{GeometryofCurves2}, the stack $\M^{\Gamma}$ is also smooth. Since the domain and the target of $\xi_{\Gamma}$ are smooth, the `miracle flatness' (\cite[00R3]{stacks-project}) implies that $\xi_\Gamma$ is flat. Furthermore the morphism is smooth because it is flat and each geometric fiber is smooth. Smooth and quasi-finite morphisms are \'etale and hence $\xi_\Gamma$ is an isomorphism. 
\end{proof}
\subsection{\texorpdfstring{$\A$}{A}-valued prestable curves} \label{Sect:Avaluedprestable}
For each $g,n$ there exists the universal curve $\C_{g,n} \to \M_{g,n}$. For later applications, it will be necessary to compute with tautological classes on $\C_{g,n}$ (and tautological classes on the universal curve over $\C_{g,n}$, etc). For the moduli spaces of stable curves, a separate theory is not necessary because the universal curve over $\Mbar_{g,n}$ is given by the forgetful map $\Mbar_{g,n+1} \to \Mbar_{g,n}$. The same is \emph{not} true for $\M_{g,n}$. Indeed, in Lemma \ref{Lem:forgetfulmorphism} we saw that the forgetful morphism $\M_{g,n+1} \to \M_{g,n}$ is smooth, so it cannot be the universal curve over $\M_{g,n}$. In this section we put an additional structure on prestable curves, called the $\A$-value, which allows to give a modular interpretation of the universal curve as a stack of $(n+1)$-pointed curves with additional structure. This realization will be convenient to compute tautological classes on $\C_{g,n}$.  

So let us start by recalling  the notion of prestable curves with values in a semigroup $\A$ from \cite{costello}. 
%This will be useful, since it will allow us to give a different modular interpretation of the universal curve $\pi: \C_{g,n} \to \M_{g,n}$ over $\M_{g,n}$ and to treat both the intersection theory on $\M_{g,n}$ and on $\C_{g,n}$ in the same framework.
In what follows let $\A$ be a commutative semigroup with unit $\zero \in \A$ such that
\begin{itemize}
    \item $\A$ has indecomposable zero, i.e. for $x,y \in \A$ we have $x+y=\zero$ implies $x=\zero$, $y=\zero$,
    \item $\A$ has finite decomposition, i.e. for $a \in \A$ the set $$\{(a_1,a_2) \in \A \times \A : a_1 + a_2 =a\}$$ is finite.
\end{itemize}
Classical examples include $\A=\{\zero\}$ or $\A=\mathbb{N}$, but later we are going to work with 
\[\A=\{\zero,\one\}\text{ with }\zero + \zero = \zero, \zero + \one = \one + \zero = \one + \one = \one.\]

Fixing $\A$ and an element $a \in \A$, Behrend-Manin \cite{behrendmanin} and Costello \cite{costello} define an algebraic stack $\M_{g,n,a}$. {A geometric point corresponds} to a prestable curve $(C,p_1, \ldots, p_n)$ together with a map $C_v \mapsto a_{C_v}$ from the set of irreducible components $C_v$ of the normalization of $C$ to $\A$ such that the sum of all $a_{C_v}$ equals $a$. The curve must satisfy the stability condition that for each $C_v$ either $a_{C_v} \neq \zero$ or that $C_v$ is stable, in the sense that for $g(C_v)=0$ it carries three special points and for $g(C_v)=1$ it carries at least one special point. Over an arbitrary base scheme the definition of $\A$-valued stable curves needs extra care, see \cite[p.569]{costello} for details.
As an example, for any $\A$ as above and $a=\zero$ we obtain $\M_{g,n,\zero}=\Mbar_{g,n}$.

Our main motivation for considering the moduli spaces $\M_{g,n,a}$ is the fact that we have a forgetful morphism $\pi : \M_{g,n+1,a} \to \M_{g,n,a}$ making $\M_{g,n+1,a}$ the universal curve over $\M_{g,n,a}$. The image of a point 
\[(C, p_1, \ldots, p_n, p_{n+1}, (a_{C_v})_v) \in \M_{g,n+1,a}\]
under $\pi$ is formed by first forgetting the marked point $p_{n+1}$. Then, if the component $C_v$ of $C$ containing $p_{n+1}$ then becomes unstable\footnote{This happens precisely for $a_{C_v}=\zero$ and $C_v$ being of genus $0$ with at most two special points apart from $p_{n+1}$.}, the component $C_v$ of $C$ is contracted.
With this notation in place, we summarize the relevant properties of $\M_{g,n,a}$ from \cite{costello}.
\begin{proposition} \label{Pro:Mgnaprop}
The stack $\M_{g,n,a}$ is a smooth, algebraic stack, locally of finite type and the morphism $\M_{g,n,a} \to \M_{g,n}$ forgetting the values in $\A$ is \'etale and relatively a scheme of finite type. The universal curve over $\M_{g,n,a}$ is given by the forgetful morphism $\pi : \M_{g,n+1,a} \to \M_{g,n,a}$.
\end{proposition}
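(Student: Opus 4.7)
The plan is to reduce everything to a careful analysis of the forgetful functor $\M_{g,n,a}\to\M_{g,n}$ and then to set up a modular interpretation of the universal curve. Since the paper cites \cite{costello} for this proposition, my proof will mostly sketch the constructions that make each clause true and leave the more delicate verifications (cartesian descent, algebraicity checks) to the reference.

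First I would prove that the forgetful morphism $\M_{g,n,a}\to\M_{g,n}$ is étale and relatively a scheme of finite type. Given an $S$-point $(C,p_1,\dots,p_n)$ of $\M_{g,n}$, an $\A$-valued lift on $S$ is a locally constant function on $S$ assigning to each geometric irreducible component of the normalization an element of $\A$ whose sum over components of each fiber equals $a$. By the finite decomposition hypothesis on $\A$ there are only finitely many such assignments on each fiber, and the indecomposable-zero hypothesis guarantees compatibility under specialization; combined with the stability condition (which defines an open condition on the indexing set) this exhibits the fiber of the forgetful morphism as a finite, étale scheme over $S$. From this one concludes that $\M_{g,n,a}$ is algebraic, smooth, and locally of finite type, since these properties descend through an étale cover of the smooth, algebraic, locally finite type stack $\M_{g,n}$.

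Next, for the universal curve statement, I would give the modular interpretation of $\pi\colon\M_{g,n+1,a}\to\M_{g,n,a}$ by constructing mutually inverse maps between sections $S\to\C_{g,n,a}$ of the universal curve and objects of $\M_{g,n+1,a}$ over $S$ mapping to a fixed object of $\M_{g,n,a}(S)$. In one direction, given $(C,p_1,\dots,p_{n+1},(a_{C_v})_v)\in\M_{g,n+1,a}(S)$, the image of $\pi$ is obtained by forgetting $p_{n+1}$ and contracting the unique component $C_v$ that becomes unstable (which happens exactly when $a_{C_v}=\zero$ and $C_v$ has at most two other special points in genus zero); this contraction gives a canonical section of the underlying curve coming from the image of $p_{n+1}$. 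Conversely, given a section $q\colon S\to C$ of the universal prestable curve over an $S$-point of $\M_{g,n,a}$, I would produce a point of $\M_{g,n+1,a}(S)$ by the standard bubbling construction: if $q$ lies in the smooth unmarked locus of each fiber, simply set $p_{n+1}=q$ with the same $\A$-values; if $q$ meets an existing marked point or node, replace that locus by a new $\PP^1$-bubble carrying the value $\zero\in\A$ and equipped with the new marking $p_{n+1}$ and the appropriate special points to satisfy the stability condition.

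The main subtlety will be showing that these two constructions are inverse \emph{as morphisms of stacks}, not merely on points. This requires checking that the bubbling construction works in families (including over non-reduced bases, where the locus on which a marking hits a node must be handled with care), and that it is functorial under base change. The indecomposable-zero condition on $\A$ enters precisely here, since it prevents an added bubble from carrying a nontrivial $\A$-value and thus ensures that the bubbling recipe produces a uniquely determined lift. Once these checks are made, the resulting equivalence of groupoids is exactly the statement that $\M_{g,n+1,a}$ represents the universal curve $\C_{g,n,a}\to\M_{g,n,a}$, and I would refer the reader to \cite{costello} for the full verification.
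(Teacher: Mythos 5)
Your overall strategy does not really diverge from the paper's: the paper proves this proposition by a bare citation to Propositions 2.0.2 and 2.1.1 of \cite{costello}, and your sketch likewise defers all the genuinely delicate verifications (algebraicity, the behaviour of the bubbling construction in families over non-reduced bases, functoriality under base change) to that same reference. The second half of your sketch — the Knudsen-style equivalence between sections of the universal curve and $(n+1)$-pointed $\A$-valued curves via contraction and bubbling, with the new bubble forced to carry the value $\zero$ — is the standard argument and is fine as an outline.

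There is, however, a genuine error in your justification of the first clause: the fibers of the forgetful morphism $\M_{g,n,a}\to\M_{g,n}$ are \emph{not} finite étale schemes over the base, and the morphism is not finite — it is not even separated in general. Two concrete checks: for $a=\zero$, indecomposability of $\zero$ forces every component to carry the value $\zero$, so $\M_{g,n,\zero}=\Mbar_{g,n}$ and the forgetful map is the open immersion $\Mbar_{g,n}\hookrightarrow\M_{g,n}$, which is certainly not finite. For $\A=\{\zero,\one\}$ and $a=\one$, take a family over a DVR whose generic fiber is a smooth stable curve and whose special fiber has two stable components; the generic valuation $\one$ then admits three distinct extensions to the special fiber, namely $(\zero,\one)$, $(\one,\zero)$ and $(\one,\one)$, so the uniqueness part of the valuative criterion of separatedness fails. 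What is true is only what the proposition asserts: the morphism is étale and relatively a scheme of finite type (roughly, over each base it is a union of open subschemes indexed by valuation types, glued non-separatedly), and "finitely many valuations on each geometric fiber" does not upgrade to "finite morphism." Since you only use étaleness and local finite type to transfer smoothness, algebraicity and local finiteness from $\M_{g,n}$ to $\M_{g,n,a}$, the error does not propagate to the conclusion, but as written your argument would also "prove" that the forgetful map is proper, which is false, so this step must be reworded.
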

\begin{proof}
See Proposition 2.0.2 and 2.1.1 from \cite{costello}.
\end{proof}
The fact that the universal curve is given by a moduli space of curves with an extra marked point turns out to be very convenient. Indeed, as discussed above this is not the case for the forgetful morphism $\M_{g,n+1} \to \M_{g,n}$. Indeed, it is easy to identify $\M_{g,n+1}$ as the open substack $\M_{g,n+1} \subset \C_{g,n}$ given as the complement of the set of markings and nodes.

Many other constructions we saw for prestable curves work in the $\A$-valued setting. For instance, for $g_1+g_2=g$, $n_1+n_2=n$ and $a_1, a_2 \in \A$ with $a_1 + a_2 = a$, we have a gluing morphism
\[\xi : \M_{g_1,n_1+1,a_1} \times \M_{g_2,n_2+1,a_2} \to \M_{g,n,a}.\]
These gluing maps are again representable, proper and local complete intersections. Indeed, we have a fibre diagram
\begin{equation*}
\begin{tikzcd}
 \coprod_{a_1+a_2=a} \M_{g_1,n_1+1,a_1} \times \M_{g_2,n_2+1,a_2} \arrow[r] \arrow[d] & \M_{g,n,a} \arrow[d]\\
 \M_{g_1,n_1+1} \times \M_{g_2,n_2+1} \arrow[r] & \M_{g,n}
\end{tikzcd}    
\end{equation*}
and the map at the bottom has all these properties by Lemma \ref{Lem:gluingmaps}. More generally, one defines the notion of an $\A$-valued stable graph and the corresponding gluing map has all the desired properties.

The following result allows us to apply the machinery of Costello to the moduli spaces of prestable curves.
\begin{proposition} \label{Prop:Mgnsemigroup}
Let $\A=\{\zero,\one\}$ with $\one + \one = \one$, then given $g,n$ the subset $\mathfrak Z_{g,n} \subset \M_{g,n,\one}$ of $\A$-valued curves $(C,p_1, \ldots, p_n; (a_{C_v})_v)$ such that one of the values $a_{C_v}$ equals $\zero$ is closed. Let $\mathfrak U_{g,n}=\M_{g,n,\one} \setminus \mathfrak Z_{g,n}$ be its complement. Then the composition
\begin{equation*} \label{eqn:Ugndef} 
\mathfrak U_{g,n} \hookrightarrow \M_{g,n,\one} \to \M_{g,n}
\end{equation*}
of the inclusion of $\mathfrak U_{g,n}$ with the morphism $\M_{g,n,\one} \to \M_{g,n}$ forgetting the $\A$-values defines an isomorphism $\mathfrak{U}_{g,n} \cong \M_{g,n}$.
\end{proposition}
\begin{proof}
The underlying reason why $\mathfrak{Z}_{g,n}$ is closed is that $\zero$ is indecomposable in $\A$ : given a curve $(C,p_1, \ldots, p_n; (a_{C_v})_v)$ such that some $a_{C_v}=\zero$, any degeneration of this curve still has some component with value $\zero$ since in a degeneration of $C_v$, $a_{C_v}$ must distribute to the components to which $C_v$ degenerates.

More concretely, we can write $\mathfrak{Z}_{g,n}$ as the union of images of gluing maps $\xi_\Gamma$ for suitable $\A$-valued prestable graphs $\Gamma$. Indeed, we exactly have to remove the images of $\xi_\Gamma$ for $\Gamma$ of the form 
    \begin{equation} \label{eqn:Gamma_unst_picture} \Gamma \ = \ 
    \begin{tikzpicture}[baseline=1.6 cm,label distance=0.5cm,thick,
    virtnode/.style={circle,draw,scale=0.5}, 
    nonvirt node/.style={circle,draw,fill=black,scale=0.5} ]
    % \node [virtnode] (A) {} node[below]{$A_1$}; works
    \node [virtnode,label=left:$(g_0{,}\zero)$] (A) {};
    \node at (-2,2) [nonvirt node,label=left:$(g_1{,}\one)$] (B) {};
    \node at (2,2) [nonvirt node,label=right:$(g_s{,}\one)$] (C) {};
    \node at (0,2) {$\cdots$};
    \draw [-] (A) to (B);
    \draw [-,bend left] (A) to node[midway,left] {$e_1$} (B) ;
    \draw [-,bend right] (A) to (B);
    \draw [-,bend left] (A) to (C);
    \draw [-,bend right] (A) to node[midway,right] {$e_s$} (C);    
    
    \node at (.5,-.7) (n1) {};
    \node at (0,-.86) (n0) {$I_0$};
    \node at (-.5,-.7) (n2) {};
    \draw [-] (A) to (n1);
    \draw [-] (A) to (n0);
    \draw [-] (A) to (n2);
    
    \node at (-2.5,2.7) (m1) {};
    \node at (-2,2.86) (m0) {$I_1$};
    \node at (-1.5,2.7) (m2) {};
    \draw [-] (B) to (m1);
    \draw [-] (B) to (m2);    
    
    \node at (2,2.86) (o) {$I_s$};
    \draw [-] (C) to (o);
    \end{tikzpicture}
    \end{equation}
where $s \geq 1$, $e_1, \ldots, e_s \in \mathbb{Z}_{>0}$, \[g_0+g_1+\ldots+g_s=g+\sum_{i=1}^s e_i-1\]
and $I_0 \coprod I_1 \coprod \ldots \coprod I_s = \{1, \ldots, n\}$. Note that for this locus to be nonempty, we must require $g_0>0$ or $|I_0|+\sum_i e_i>2$.

While the image of each $\xi_\Gamma$ is closed, we use infinitely many of them. But in the open exhaustion of $\M_{g,n,\one}$ by the substacks of curves with at most $\ell$ nodes, each of these open substacks only intersects finitely many of the images of $\xi_\Gamma$ nontrivially, so still the union of their images is closed.

The fact that $\mathfrak U_{g,n} \to \M_{g,n}$ is an isomorphism can be seen in different ways: its inverse is just given by the functor sending each prestable curve $(C,p_1, \ldots, p_n)$ to itself with value $a_{C_v} = \one$ on each component, i.e. $$\M_{g,n} \to \mathfrak U_{g,n}, (C,p_1, \ldots, p_n) \mapsto (C,p_1, \ldots, p_n; (\one)_v).$$
Alternatively one observes that $\mathfrak U_{g,n} \to \M_{g,n}$ is \'etale, representable and a bijection on geometric points.
\end{proof}

\begin{corollary} \label{Cor:modinterpretunivcurve}
The universal curve $\C_{g,n} \to \M_{g,n}$ is given by the morphism
\[\pi: \M_{g,n+1,\one} \setminus \pi^{-1}(\mathfrak Z_{g,n}) \to \M_{g,n,\one} \setminus \mathfrak Z_{g,n}\]
forgetting the marking $n+1$ and contracting the component containing it if this component becomes unstable. The $\A$-valued prestable graphs $\Gamma$ appearing in $\mathfrak{Z}_{g,n+1}$ but \emph{not} contained in $\pi^{-1}(\mathfrak Z_{g,n})$ are exactly of one of the three following forms
\begin{itemize}
    \item for $i=1, \ldots, n$ the graphs
    \begin{equation} \label{eqn:sectiongraph} 
    \Gamma \ = \ 
    \begin{tikzpicture}[baseline=-3pt,label distance=0.5cm,thick,
    virtnode/.style={circle,draw,scale=0.5}, 
    nonvirt node/.style={circle,draw,fill=black,scale=0.5} ]
    % \node [virtnode] (A) {} node[below]{$A_1$}; works
    \node [virtnode,label=below:$(0{,}\zero)$] (A) {};
    \node at (2,0) [nonvirt node,label=below:$(g{,}\one)$] (B) {};
    \draw [-] (A) to (B);
    \node at (-0.9,.5) (n1) {$i$};
    % \node at (-.86,0) (n0) {$I_1$};
    \node at (-1.3,-.5) (n2) {$n+1$};
    \draw [-] (A) to (n1);
    \draw [-] (A) to (n2);
    
    \node at (2.7,.5) (m1) {};
    \node at (3.86,0) (m0) {$\{1, \ldots, n\} \setminus \{i\}$};
    \node at (2.7,-.5) (m2) {};
    \draw [-] (B) to (m1);
    \draw [-] (B) to (m2);    
    \end{tikzpicture}
    \end{equation}
    corresponding to the $n$ sections of the universal curve $\pi: \C_{g,n} \to \M_{g,n}$,
    \item boundary divisors with edge subdivided, inserting a genus zero, value $\zero$ vertex carrying $n+1$
    \begin{equation*} 
    \begin{tikzpicture}[baseline=-3pt,label distance=0.5cm,thick,
    virtnode/.style={circle,draw,scale=0.5}, 
    nonvirt node/.style={circle,draw,fill=black,scale=0.5} ]
    % \node [virtnode] (A) {} node[below]{$A_1$}; works
    \node at (0,0) [nonvirt node,label=below:$(g_1{,}\one)$] (A) {};
    \node at (2,0) [virtnode,label=below:$(0{,}\zero)$] (B) {};
    \node at (4,0) [nonvirt node,label=below:$(g_2{,}\one)$] (C) {};
    \draw [-] (A) to (B);
    \draw [-] (B) to (C);
    \node at (-.7,.5) (n1) {};
    \node at (-.7,-.5) (n2) {};
    \draw [-] (A) to (n1);
    \draw [-] (A) to (n2);
    
    \node at (2,.7) (k1) {$n+1$};
    \draw [-] (B) to (k1);
    \node at (4.7,.5) (m1) {};
    \node at (4.7,-.5) (m2) {};
    \draw [-] (C) to (m1);
    \draw [-] (C) to (m2); 
    \end{tikzpicture}
    \end{equation*}
    where $g_1+g_2=g$ and
    \begin{equation*} 
    \begin{tikzpicture}[baseline=-3pt,label distance=0.5cm,thick,
    virtnode/.style={circle,draw,scale=0.5}, 
    nonvirt node/.style={circle,draw,fill=black,scale=0.5} ]
    % \node [virtnode] (A) {} node[below]{$A_1$}; works
    \node at (0,0) [nonvirt node,label=below:$(g-1{,}\one)$] (A) {};
    \node at (2,0) [virtnode,label=below:$(0{,}\zero)$] (B) {};
    \draw [-,bend left] (A) to (B);
    \draw [-,bend right] (A) to (B);
    
    \node at (-.7,.5) (n1) {};
    \node at (-.7,-.5) (n2) {};
    \draw [-] (A) to (n1);
    \draw [-] (A) to (n2);
    
    \node at (2.7,) (k1) {$n+1$};
    \draw [-] (B) to (k1);    
    \end{tikzpicture}\,,
    \end{equation*}
    corresponding to the locus of nodes inside the universal curve $\pi: \C_{g,n} \to \M_{g,n}$.
\end{itemize}
%They correspond to the components of $\C_{g,n} \setminus \M_{g,n+1}$, namely the image of the sections $\sigma_i: \M_{g,n} \to \C_{g,n}$ and the loci of nodes in $\C_{g,n}$.
\end{corollary}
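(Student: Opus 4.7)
My plan is to handle the two claims separately. For the identification of the universal curve, I would combine Proposition \ref{Pro:Mgnaprop}, which asserts that $\pi\colon \M_{g,n+1,\one} \to \M_{g,n,\one}$ is the universal curve over $\M_{g,n,\one}$, with the preceding proposition's isomorphism $\mathfrak{U}_{g,n} = \M_{g,n,\one} \setminus \mathfrak{Z}_{g,n} \cong \M_{g,n}$. Since the formation of the universal curve commutes with open base change, restricting $\pi$ to $\mathfrak{U}_{g,n}$ yields the universal curve over $\mathfrak{U}_{g,n}$; unwinding the isomorphism with $\M_{g,n}$ produces the claimed description of $\C_{g,n}$ as $\M_{g,n+1,\one} \setminus \pi^{-1}(\mathfrak{Z}_{g,n}) \to \M_{g,n,\one} \setminus \mathfrak{Z}_{g,n}$.

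For the classification of $\A$-valued graphs, I would use the previous proposition's explicit description of $\mathfrak{Z}_{g,n+1}$ as the union of images $\xi_\Gamma$ for graphs $\Gamma$ of the form \eqref{eqn:Gamma_unst_picture}: a unique $\zero$-valued central vertex $v_0$ of some genus $g_0$, carrying legs $I_0$, and connected to $s \geq 1$ outer $\one$-valued vertices of genera $g_1, \ldots, g_s$ via $e_1, \ldots, e_s$ edges. Because the forgetful-contraction morphism $\pi$ only alters the irreducible component containing $p_{n+1}$, any other $\zero$-vertex is preserved. Hence for the image of $\xi_\Gamma$ to escape $\pi^{-1}(\mathfrak{Z}_{g,n})$, one must have $n+1 \in I_0$ and the component $v_0$ must become unstable after forgetting $n+1$. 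The instability criterion from the footnote forces $g_0 = 0$ and the number of special points at $v_0$ other than $n+1$ to be at most $2$, while the original $\A$-valued stability of $v_0$ as a genus-$0$, value-$\zero$ vertex demands at least $3$ special points total; combining gives exactly $|I_0| + \sum_i e_i = 3$ with the residual count $(|I_0|-1) + \sum_i e_i = 2$.

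The remaining step is a short case analysis on $(|I_0|, \sum_i e_i)$. The triple $(3,0)$ is excluded because $s \geq 1$. The triple $(2,1)$ forces $s=1$, $e_1=1$: a single leg $i \in \{1,\ldots,n\}$ besides $n+1$, connected by one edge to a lone $\one$-vertex, which must have genus $g$; this is the first (section) graph. The triple $(1,2)$ yields either $(s,e_1)=(1,2)$, a double edge to a single $\one$-vertex which must then have genus $g-1$ (accounting for the loop $h^1 = 1$), producing the third graph, or $(s,e_1,e_2)=(2,1,1)$ with two $\one$-neighbors of genera $g_1+g_2=g$, producing the second graph. This exhausts the possibilities.

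The main technical point I expect to require care is the correct reading of ``appearing in $\mathfrak{Z}_{g,n+1}$'': one must invoke that the graphs $\Gamma_\text{unst}$ from the preceding proposition already suffice to cover $\mathfrak{Z}_{g,n+1}$, so it is legitimate to restrict attention to graphs of that shape rather than all prestable graphs with some $\zero$-vertex. Beyond this, the computation of the image graph under $\pi$ in each case is straightforward bookkeeping: the leg $i$ migrates to the adjacent $\one$-vertex in case 1, the two half-edges at $v_0$ fuse into a single separating edge in case 2, and fuse into a self-loop in case 3. In all three cases the resulting graph has only $\one$-vertices and therefore lies in $\mathfrak{U}_{g,n}$, confirming that the strata do not belong to $\pi^{-1}(\mathfrak{Z}_{g,n})$.
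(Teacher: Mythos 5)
Your proof is correct and is exactly the argument the paper intends: the paper states this corollary without any written proof, treating it as an immediate consequence of Proposition \ref{Pro:Mgnaprop} and the proposition describing $\mathfrak{Z}_{g,n}$, and your two steps (restricting the universal curve $\M_{g,n+1,\one}\to\M_{g,n,\one}$ to the open substack $\mathfrak{U}_{g,n}\cong\M_{g,n}$, then classifying which graphs of the form \eqref{eqn:Gamma_unst_picture} have their $\zero$-vertex contracted by $\pi$ via the constraint $(|I_0|-1)+\sum_i e_i=2$) fill in precisely the expected details. The case analysis and the genus bookkeeping ($g_1=g$, $g_1+g_2=g$, and $g-1$ with $h^1=1$ for the double edge) all check out against the three graphs listed in the statement.
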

Corollary \ref{Cor:modinterpretunivcurve} shows that in order to develop the intersection theory of $\M_{g,n}$ and $\C_{g,n}$, it suffices to consider the general case of the intersection theory of $\M_{g,n,\one}$ (or even more generally, $\M_{g,n,a}$ for any semigroup $\A$ and $a \in \A$).
\section{Chow groups and the tautological ring of \texorpdfstring{$\M_{g,n}$}{Mgn}} \label{Sect:ChowandTaut}
\subsection{Definitions}\label{sec:definitions}
In this paper, we want to study the Chow groups (with $\mathbb{Q}$-coefficients) of the stacks $\M_{g,n}$ (and more generally, the stacks $\M_{g,n,a}$ for some element $a \in \A$ in a semigroup $\A$).

To define these Chow groups, recall that in \cite{kreschartin} Kresch constructed Chow groups $\CH_*(\mathcal{X})$ for algebraic stacks $\mathcal{X}$ of finite type over a field $k$. Moreover, there is an intersection product on $\CH_*(\mathcal{X})$ when $\mathcal{X}$ is smooth and \emph{stratified by {global} quotient stacks}\footnote{This means that there exists a stratification by locally closed substacks which are each isomorphic to a global quotient of an algebraic space by a linear algebraic group.}, see \cite[Theorem 2.1.12]{kreschartin}. This last condition can be checked point-wise:  a reduced stack $\mathcal{X}$ is stratified by {global} quotient stacks if and only if the stabilizers of geometric points of $\mathcal{X}$ are affine (\cite[Proposition 3.5.9]{kreschartin}).

Now the spaces $\M_{g,n,a}$ are in general not of finite type (only locally of finite type) and so we need to extend the definition of Chow groups above. Assume that 
 $\M$ is an algebraic stack, locally of finite type over a field $k$. Choose a directed system\footnote{Recall that this means that for all $\mathcal U_i, \mathcal U_j$ there exists a $\mathcal U_k$ containing both of them.} $(\mathcal{U}_i)_{i \in I}$  of finite type open substacks of $\M$ whose union is all of $\M$. Then we set
\begin{equation*}
    \CH_*(\M) = \varprojlim_{i \in I} \CH_*(\mathcal U_i),
\end{equation*}
where for $\mathcal U_i \subseteq \mathcal U_j$ the transition map $\CH_*(\mathcal U_j) \to \CH_*(\mathcal U_i)$ is given by the restriction to $\mathcal U_i$. In other words, we have
\begin{equation*}
    \CH_*(\M) = \{(\alpha_i)_{i \in I} : \alpha_i \in \CH_*(\mathcal U_i), \alpha_j|_{\mathcal U_i} = \alpha_i\text{ for }\mathcal U_i \subseteq \mathcal U_j \}.
\end{equation*}
We give the details of this definition in Appendix \ref{Sect:Chowlocfintype} and show that the Chow groups of locally finite type stacks inherit all the usual properties (e.g. flat pullback, projective pushforward, Chern classes of vector bundles and Gysin pullbacks) of the Chow groups from \cite{kreschartin}. Moreover, if $\M$ is smooth and has affine stabilizer groups at geometric points, the intersection products on the groups $\CH_*(\mathcal U_i)$ give rise to an intersection product on $\CH_*(\M)$. In this case, for $\M$ equidimensional we often use the cohomological degree convention
$$\CH^*(\M) = \CH_{\dim \M -*}(\M).$$

\begin{proposition}
% The stack $\M_{g,n,a}$ admits a good filtration by finite-type substacks. The gluing and forgetful maps respect these filtrations. Thus Chow groups $\CH_{d}(\M_{g,n,a})$ are well-defined. For $(g,n) \neq (1,0)$, the stack $\M_{g,n,a}$ admits a stratification by quotient stacks and thus there are operational Chow groups $\CH^{d}_{\textup{op}}(\M_{g,n,a})$.
Let $g,n \geq 0$, let $\A$ be a semigroup with indecomposable zero and finite decomposition as in Section \ref{Sect:Avaluedprestable} and $a \in \A$. Then the stacks $\M_{g,n}$ and $\M_{g,n,a}$ have well-defined Chow groups $\CH_*(\M_{g,n})$ and $\CH_*(\M_{g,n,a})$. For $(g,n) \neq (1,0)$ the stabilizer groups of all geometric points of $\M_{g,n}$ and $\M_{g,n,a}$ are affine and so the Chow groups have an intersection product.
\end{proposition}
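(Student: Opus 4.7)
The plan is to establish the proposition in three steps: (i) exhibit an ascending directed system of finite-type open substacks covering $\M_{g,n}$ and $\M_{g,n,a}$, (ii) invoke the construction of Chow groups of locally finite type stacks from Appendix~\ref{Sect:Chowlocfintype}, and (iii) analyze the stabilizer groups at geometric points in order to apply \cite[Theorem~2.1.12]{kreschartin}.

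For step (i), I would take $\mathcal{U}_\ell \subseteq \M_{g,n}$ to be the open substack of curves with at most $\ell$ nodes. Openness follows from upper semicontinuity of the number of nodes in a flat family, and clearly $\M_{g,n} = \bigcup_\ell \mathcal{U}_\ell$. Since only finitely many isomorphism classes of prestable graphs $\Gamma$ of type $(g,n)$ have at most $\ell$ edges, and each stratum $\M^\Gamma \cong \M_\Gamma^{\textup{sm}}/\aut(\Gamma)$ is of finite type by Proposition~\ref{Pro:MGamma}, each $\mathcal{U}_\ell$ is of finite type, and the collection is directed by inclusion. For $\M_{g,n,a}$, the morphism $\M_{g,n,a} \to \M_{g,n}$ is \'etale and relatively a scheme of finite type by Proposition~\ref{Pro:Mgnaprop}, so the preimages of the $\mathcal{U}_\ell$ provide a directed system of finite-type open substacks covering $\M_{g,n,a}$. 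Step (ii) is then immediate from the construction in Appendix~\ref{Sect:Chowlocfintype}.

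For step (iii), by \cite[Proposition~3.5.9]{kreschartin} it suffices to show that the stabilizer of every geometric point is affine. For $x = (C,p_1,\ldots,p_n) \in \M_{g,n}$ with dual graph $\Gamma$, the argument in Proposition~\ref{Pro:MGamma} produces an extension
\[
1 \to \prod_{v \in V(\Gamma)} \aut(C_v,(p_h)_{h \in H(v)}) \to \aut(x) \to \aut(\Gamma)_x \to 1
\]
with $\aut(\Gamma)_x$ finite, so affineness of $\aut(x)$ reduces to affineness of each vertex automorphism group. The latter is finite whenever $2g(v)-2+n(v)>0$, while for the unstable types $(g(v),n(v)) \in \{(0,0),(0,1),(0,2)\}$ it equals $\PGL_2$, $\mathbb{G}_a \rtimes \mathbb{G}_m$, and $\mathbb{G}_m$ respectively, all of which are affine. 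The unique non-affine case is $(g(v),n(v))=(1,0)$, where $\aut(C_v)$ is an extension of a finite group by the elliptic (or nodal cubic) curve $C_v$ itself via translations.

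The main point is then a small combinatorial check, which I expect to be the only mildly nontrivial step: a vertex of type $(1,0)$ cannot arise in a prestable graph of type $(g,n)\neq(1,0)$. If $|V(\Gamma)|\geq 2$, connectedness forces every vertex to carry at least one half-edge from an edge, so $n(v)\geq 1$; if $|V(\Gamma)|=1$ with $s$ self-loops, then $n(v) = n + 2s$, so $n(v)=0$ forces $n = s = 0$ and the genus condition yields $g = g(v) = 1$, the excluded case. Hence for $(g,n)\neq(1,0)$ every stabilizer on $\M_{g,n}$ is affine. Since the morphism $\M_{g,n,a} \to \M_{g,n}$ is representable and \'etale it induces isomorphisms of stabilizer groups, giving the same conclusion for $\M_{g,n,a}$ and completing the proof.
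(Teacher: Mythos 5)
Your proposal is correct and follows essentially the same route as the paper: reduce well-definedness to the general construction for locally finite type stacks, then verify affine stabilizers by decomposing the automorphism group of a prestable curve as a finite extension of the vertex automorphism groups and observing that the only non-affine case, a genus~$1$ component with no special points, forces $(g,n)=(1,0)$ by connectedness. Your version merely spells out the finite-type exhaustion and the combinatorial exclusion in more detail (note only that a vertex of the dual graph corresponds to a \emph{smooth} component of the normalization, so the non-affine stabilizer arises from a smooth elliptic curve rather than a nodal cubic, whose dual graph has a genus~$0$ vertex with $n(v)=2$).
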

\begin{proof}
The stacks $\M_{g,n}$ and $\M_{g,n,a}$ are locally of finite type (and smooth) by Proposition \ref{Pro:Mgnaprop} and thus satisfy the conditions of Definition \ref{Def:Chowlocallyft} from the appendix. For the existence of intersection products, we need to check that geometric points have affine stabilizers. The stabilizer group of such a prestable curve is a finite extension of the automorphism groups of its components. The only non-finite automorphism groups that can occur here are in genus $0$ (where they are subgroups of $\mathrm{PGL}_{2}$ and thus affine) and in genus $1$ with no special points. Since the prestable curves are assumed to be connected, the last case can only occur for $(g,n)=(1,0)$.
% The filtration on $\M_{g,n}$ can be seen from the images of the charts by $\Mbar_{g,n+m}$. Since $\M_{g,n,a} \to \M_{g,n}$ is relatively of finite type, this lifts to a finite type filtration of $\M_{g,n,a}$. The condition of being stratified by quotient stacks can be checked point-wise by \cite[Proposition 3.5.9]{kreschartin}. Namely, the stabilizers of geometric points should be affine, which is true for all $(g,n)$ except $(1,0)$.
\end{proof}

Now recall from Definition \ref{Def:tautringintro} that the tautological rings $(\R^*(\M_{g,n,a}))_{g,n,a}$ are defined as the smallest system of $\QQ$-subalgebras with unit of the Chow rings $(\CH^*(\M_{g,n,a}))_{g,n,a}$ closed under taking pushforwards by the natural forgetful and gluing maps.

We recall the following particular examples of tautological classes:
\begin{definition}\label{Def:psikappa}
Let $\pi: \M_{g,n+1,a} \to \M_{g,n,a}$ be the universal curve over $\M_{g,n,a}$ and for $i=1, \ldots, n$ let $\sigma_i : \M_{g,n,a} \to \M_{g,n+1,a}$ be the section corresponding to the $i$-th marked points. Let $\omega_\pi$ be the relative canonical line bundle on $\M_{g,n+1,a}$. Then we define
\begin{equation} \label{eqn:defpsi}
\psi_i = \sigma_i^* c_1\left(  \omega_\pi \right) \in \CH^1(\M_{g,n,a}) \; \text{ for }i=1, \ldots, n
\end{equation}
and
\begin{align} \label{eqn:defkappa}
\kappa_m &= \pi_* \big(\psi_{n+1}^{m+1}\big) \in \CH^m(\M_{g,n,a})\,.
\end{align}
\end{definition}

% \begin{definition}
% Let $\pi: \C_{g,n} \to \M_{g,n}$ be the universal curve over $\M_{g,n}$ and for $i=1, \ldots, n$ let $\sigma_i : \M_{g,n} \to \C_{g,n}$ be the section corresponding to the $i$-th marked points with image $D_i = \sigma_i(\M_{g,n}) \subset \C_{g,n}$. Let $\omega_\pi$ be the relative canonical line bundle on $\C_{g,n}$ and
% \[\omega_\pi^\textup{log} = \omega_\pi(\sum_{i=1}^n D_i)\]
% the log-canonical bundle on $\C_{g,n}$. Then we define
% \begin{equation} \label{eqn:defpsi}
% \psi_i = \sigma_i^* c_1\left(  \omega_\pi \right) \in \CH^1(\M_{g,n}) \text{ for }i=1, \ldots, n.
% \end{equation}
% On the other hand, on $\C_{g,n}$ we set
% \begin{align} \label{eqn:defpsiC}
% \psi_i &= \pi^*(\psi_i) + [D_i] \in \CH^1(\C_{g,n}) \text{ for }i=1, \ldots, n,\nonumber\\
% \psi_{n+1} &= c_1(\omega_\pi^\textup{log}) \in \CH^1(\C_{g,n}).
% \end{align}
% Moreover, we define
% \begin{align} \label{eqn:defkappa}
% \kappa_a &= \pi_* \left( c_1(\omega_\pi^\textup{log})^{a+1} \right) \in \CH^a(\M_{g,n}),\nonumber\\
% \kappa_a &= \pi^* \kappa_a + \psi_{n+1}^a \in \CH^a(\M_{g,n}).
% \end{align}
% \end{definition}

% \begin{remark}
% In contrast to the situation for moduli spaces of stable curves, the fact that the universal curve over $\M_{g,n}$ does not equal $\M_{g,n+1}$ forces us to have a separate definition of $\psi$ and $\kappa$ classes on $\C_{g,n}$. It is a straightforward check that the restriction of these classes to the open substack $\M_{g,n+1} \subset \C_{g,n}$ gives the corresponding $\psi$ and $\kappa$ classes there.
% \end{remark}

\begin{definition} \label{def:decstratclass}
Let $\Gamma$ be an $\A$-valued prestable graph in genus $g$ with $n$ markings with total value $a \in \A$. For $\M_\Gamma = \prod_{v \in V(\Gamma)} \M_{g(v),n(v),a(v)}$ a decoration $\alpha$ on $\Gamma$ is an element of $\CH^*(\M_\Gamma)$ given by a product of $\kappa$ and $\psi$-classes on the factors $\M_{g(v),n(v),a(v)}$ of $\M_\Gamma$. Thus it has the form
\begin{equation} \label{eqn:alphadecoration}
\alpha = \prod_{v\in V} \left( \prod_{i \in H(v)} \psi_{v,i}^{a_i} \prod_{a=1}^{m_v} \kappa_{v,a}^{b_{v,a}} \right)\in \CH^*(\M_\Gamma)\end{equation}
{where $a_i, b_{v,a}\geq 0$ and $m_v\geq 0$ are some integers.}
We define the decorated stratum class $[\Gamma,\alpha]$ as the pushforward
\[[\Gamma,\alpha] = (\xi_\Gamma)_* \alpha \in \CH^*(\M_{g,n,a}).\]
\end{definition}
One of the main goals of this section is to show that the set of tautological classes $\R^*(\M_{g,n,a}) \subseteq \CH^*(\M_{g,n,a})$ is the $\mathbb{Q}$-linear span of all classes $[\Gamma,\alpha]$.

\begin{remark} \label{Rmk:classicaltautclasses}
We define tautological classes on the spaces $\M_{g,n}$ and $\C_{g,n}$ by seeing these stacks as open subsets of $\M_{g,n,\one}$ and $\M_{g,n+1,\one}$ for $\A=\{\zero,\one\}$ as in Corollary \ref{Cor:modinterpretunivcurve}.
Then tautological classes on $\M_{g,n}$ and $\C_{g,n}$ are given by the restrictions of tautological classes on $\M_{g,n,\one}$ and $\M_{g,n+1,\one}$.

From the point of view of decorated strata classes, note that for $\M_{g,n}$ only $\A$-valued prestable graphs where all values are $\one$ can contribute (and these are in natural bijections with prestable graphs without valuation). On the other hand, for $\C_{g,n}$ we can have vertices $v$ with value $\zero$ contributing nontrivial classes. 
This happens exactly for the graphs shown in Corollary \ref{Cor:modinterpretunivcurve}, corresponding to the universal sections of $\C_{g,n} \to \M_{g,n}$ and the loci of nodes inside $\C_{g,n}$ over boundary strata of $\M_{g,n}$.
% this happens exactly for $g(v)=0$, $n(v)=3$ and the marking $n+1$ incident to $v$. Let $\overline \Gamma$ be the graph obtained by forgetting $n+1$ and removing/contracting $v$.\jocomment{Describe more clearly.} If $v$ has one half-edge and a marking $i \in \{1, \ldots, n\}$ then $\M_\Gamma \to \M_{g,n+1,\one}$ parametrizes the section $\sigma_i$ over $\M_{\overline \Gamma} \to \M_{g,n,\one}$. If $v$ has two half-edges, the map $\M_\Gamma \to \M_{g,n+1,\one}$ corresponds to the section of the universal curve over $\M_{\overline \Gamma}$ corresponding to the node for the edge $e$ of $\overline \Gamma$ obtained
\end{remark}

% \begin{definition}
% The set of tautological classes $\R^*(\M_{g,n}) \subset \CH^*(\M_{g,n})$ is the $\mathbb{Q}$-linear span of all decorated strata classes $[\Gamma,\alpha]$ as in Definition \ref{def:decstratclass}.
% \end{definition}

% While this definition is very analogous to the case of stable curves, we need some new tools to define tautological classes on the universal curve $\C_{g,n}$. 

\subsection{Intersections and functoriality of tautological classes} \label{calculus}
% Here, it might make sense to also introduce tautological classes on the universal curve $\C_{g,n}$, since this actually appears in the comparison theorems for $\kappa, \psi$-classes between $\Mbar$ and $\M$.
% \jocomment{ I suspect there are some connections to the paper \url{https://arxiv.org/abs/1804.05467} by Pixton, we should certainly ask for his opinion at some point.}\Ycomment{I completely agree with you. We discussed a while ago how to realize formal classes appear in Pixton's paper. I wrote something in Section 4 as very premature form.}
In this section we describe how the classes $[\Gamma,\alpha]$ behave under taking intersections as well as pullbacks and pushforwards under natural gluing, forgetful and stabilization maps.

\subsubsection*{Pushforwards by gluing maps}
Pushing forward by gluing maps is by far the easiest operation: given an $\A$-valued graph $\Gamma_0$ and classes $[\Gamma_v,\alpha_v] \in \R^*(\M_{g(v),n(v),a(v)})$ for $v \in V(\Gamma_0)$, the pushforward of the class $$\prod_{v \in V(\Gamma)} [\Gamma_v,\alpha_v] \in \CH^*(\M_{\Gamma})$$ is given by $[\Gamma,\alpha]$ where $\Gamma$ is obtained by gluing the $\Gamma_i$ into the vertices of the outer graph $\Gamma_0$ and $\alpha$ is obtained by combining the decorations $\alpha_v$ using that $V(\Gamma) = \coprod_{v \in V(\Gamma_0)} V(\Gamma_v)$. 

\subsubsection*{Pullbacks by gluing maps and intersection products}
The next natural question is how a class $[B,\beta]$ pulls back along a gluing morphism $\xi_A$ for an $\A$-valued graph $A$. This operation allows a purely combinatorial description, generalizing the description in $\Mbar_{g,n}$ from \cite{graberpandh} (and already discussed for graphs $A$ with exactly one edge in \cite[Section 4]{costello}).
As combinatorial preparation, we recall the notion of morphisms of $\A$-valued stable graphs.
\begin{definition} \label{Def:Astructure}
% $A$-structure on a prestable graph $\Gamma$ (write $\Gamma \to A$),
An {\em $A$-structure} on an $\A$-valued prestable graph $\Gamma$ (write $\Gamma \to A$) is a choice of subgraphs $\Gamma_v$ of $\Gamma$ such that $\Gamma$ can be constructed by replacing each vertex $v$ of $A$ by the corresponding $\A$-valued graph $\Gamma_v$. More precisely, the data of $\Gamma \to A$ is given by 
maps
\[V(\Gamma) \to V(A)\text{ and }H(A) \to H(\Gamma).\]
They must satisfy that $V(\Gamma) \to V(A)$ is surjective, such that the preimage of $v \in V(A)$ are the vertices of a subgraph $\Gamma_v$ of $\Gamma$ with total $\A$-value $a_v$. The map $H(A) \to H(\Gamma)$ of half-edges in the opposite direction is required to be injective, and {allows one to see} identify half-edges $h \in H(v)$ of $A$ with legs of the graph $\Gamma_v$. These maps must respect the incidence relation of half-edges and vertices and the pairs of half-edges forming edges. In particular, the injection of half-edges allows to see the set of edges $E(A)$ of $A$ as a subset of the set of edges $E(\Gamma)$ of $\Gamma$ (see e.g. \cite[Definition 2.5]{SchmittvanZelm} for more details in the case of stable graphs).%\jocomment{Expand}
\end{definition}
Given an $A$-structure $\Gamma\to A$, there exists a gluing morphism 
\[\xi_{\Gamma\to A}\colon \M_\Gamma \to \M_A\,.\]
For a decoration $\alpha$ on $\M_A$ as in (\ref{eqn:alphadecoration}), it is easy to describe $\xi_{\Gamma \to A}^* \alpha$ using that
\begin{itemize}
    \item $\xi_{\Gamma \to A}^* \psi_{v,i} = \psi_{w,j}$ if $\Gamma \to A$ maps half-edge $i$ in $A$ to half-edge $j$ in $\Gamma$,
    \item $\xi_{\Gamma \to A}^* \kappa_{v,\ell} = \sum_{w \mapsto v} \kappa_{w,\ell}$, where the sum {goes} over vertices $w$ of $\Gamma$ mapping to the vertex $v$ of $A$ on which $\kappa_{v,\ell}$ lives.
\end{itemize}
Both these properties follow immediately from the definitions\footnote{For the pullback of $\kappa$-classes the proof also uses Proposition \ref{Pro:glueforgetpullback} below.} of $\kappa$ and $\psi$-classes. 

Let $f_A : \Gamma \to A$, $f_B : \Gamma \to B$ be $A$ and $B$-{structures} on the prestable graph $\Gamma$. The pair $f = (f_A, f_B)$ is called a {\em generic $(A,B)$-structure} $f=(f_A,f_B)$ on $\Gamma$ if every half-edge of $\Gamma$ corresponds to a half-edge of $A$ or a half-edge of $B$.
Given a second $(A,B)$-structure $f'=(f_A': \Gamma' \to A, f_B': \Gamma' \to B)$, an isomorphism from $f$ to $f'$ is an isomorphism $\Gamma \to \Gamma'$ commuting with the maps to $A,B$. 
Let $\mathcal{G}_{A,B}$ be the set of isomorphism classes of prestable graphs $\Gamma$ together with a generic $(A,B)$-structures on $\Gamma$.
%\jocomment{rephrased to make clear $\Gamma$ varies}

\begin{proposition}
Let $A,B$ be $\A$-valued prestable graphs for $\M_{g,n,a}$, then the fibre product of the gluing maps $\xi_A : \M_A \to \M_{g,n,a}$ and $\xi_B : \M_B \to \M_{g,n,a}$ is given by a disjoint union 
\begin{equation} \label{eqn:gluingfibreprod}
\begin{tikzcd}
 \coprod_{\Gamma \in \mathcal{G}_{A,B}} \M_\Gamma \arrow[r,"\xi_{\Gamma \to B}"] \arrow[d,"\xi_{\Gamma \to A}"] & \M_B \arrow[d,"\xi_B"]\\
 \M_A \arrow[r,"\xi_A"] & \M_{g,n,a}
\end{tikzcd}
\end{equation}
of spaces $\M_{\Gamma}$ for the set of isomorphism classes of generic $(A,B)$-structures on prestable graphs $\Gamma$. The top Chern class of the excess bundle
\begin{equation}
    E_\Gamma=\xi_{\Gamma \to A}^* \mathcal{N}_{\xi_A} / \mathcal{N}_{\xi_{\Gamma \to B}}
\end{equation}
is given by
\begin{equation}
    c_{\mathrm{top}}(E_\Gamma) = \prod_{e=(h,h') \in E(A) \cap E(B) \subset E(\Gamma)} - \psi_h - \psi_{h'},
\end{equation}
where the product is over the edges of $\Gamma$ coming both from edges of $A$ and edges of $B$ in the generic $(A,B)$-structure.
\end{proposition}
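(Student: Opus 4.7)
The plan is to prove the two assertions of the proposition separately: first the fibre product decomposition, then the excess bundle formula. Both arguments follow the template from the stable case \cite{graberpandh}, adapted to the $\A$-valued setting.

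For the fibre product decomposition, I would mimic the proof of Lemma~\ref{Lem:gluingmaps}. An $S$-point of $\M_A \times_{\M_{g,n,a}} \M_B$ consists of collections of $\A$-valued prestable curves $(C^A_v)_{v \in V(A)}$ and $(C^B_w)_{w \in V(B)}$, together with an isomorphism $\varphi$ of the two curves obtained after gluing, respecting the $n$ marked sections. Let $(C,p_1,\ldots,p_n)$ be the common underlying prestable curve and $\Gamma$ its dual graph. Each component of $C$ sits in a unique $C^A_v$, yielding a map $V(\Gamma) \to V(A)$; similarly for $B$. The half-edges of $A$ (viewed as sections on the subcurves $C^A_v$) inject into $H(\Gamma)$, and likewise for $B$. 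A routine verification shows that these data define an $A$-structure $f_A$ and a $B$-structure $f_B$ on $\Gamma$. The generic property is then forced: if $e \in E(\Gamma)$ does not come from $E(A)$, then neither half-edge of $e$ lies in the image of $H(A)$ (since any half-edge of $A$ occurring in an edge of $\Gamma$ is already paired with another half-edge of $A$ via the involution), so the generic condition requires both half-edges to lie in the image of $H(B)$, forcing $e \in E(B)$. This construction is reversed by the natural map from $\coprod_\Gamma \M_\Gamma$ to the fibre product via $\xi_{\Gamma \to A}$ and $\xi_{\Gamma \to B}$ together with the tautological identification of the two gluings. The two assignments are mutually inverse modulo isomorphism of generic $(A,B)$-structures, which proves that~\eqref{eqn:gluingfibreprod} is cartesian.

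For the excess bundle computation, deformation theory of nodal curves identifies the normal bundle of a gluing map as a direct sum over its edges:
\[ \mathcal{N}_{\xi_A} \ = \bigoplus_{e=(h,h') \in E(A)} T_h \otimes T_{h'}, \]
where $T_h$ denotes the tangent line at the corresponding half-edge. The same description applies to $\xi_{\Gamma \to B}$, which is indexed by the edges in $E(\Gamma) \setminus E(B)$. By the analysis above, in a generic $(A,B)$-structure we have $E(\Gamma) = E(A) \cup E(B)$, hence $E(\Gamma) \setminus E(B) = E(A) \setminus (E(A) \cap E(B))$. Under the canonical inclusion $\mathcal{N}_{\xi_{\Gamma \to B}} \hookrightarrow \xi_{\Gamma \to A}^* \mathcal{N}_{\xi_A}$, this subbundle matches the summands indexed by those edges, and the quotient is
\[ E_\Gamma \ = \bigoplus_{e=(h,h') \in E(A) \cap E(B)} T_h \otimes T_{h'}. \]
Applying $c_{\mathrm{top}}$ and using $c_1(T_h \otimes T_{h'}) = -\psi_h - \psi_{h'}$ yields the stated product formula.

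I expect the main obstacle to be the first stage, specifically the verification that the generic $(A,B)$-structure extracted from an $S$-point of the fibre product is locally constant on $S$, so that one obtains a decomposition as a true disjoint union rather than a mere stratification. This requires combining upper semi-continuity of the dual graph with the observation that the two gluing data cannot degenerate further while preserving their combinatorial type. Once this is in place, the deformation-theoretic description of the normal bundles and the identification of the inclusion $\mathcal{N}_{\xi_{\Gamma \to B}} \hookrightarrow \xi_{\Gamma \to A}^* \mathcal{N}_{\xi_A}$ with the edge-indexed summands is then straightforward.
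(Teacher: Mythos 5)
Your overall strategy matches the paper's, which simply asserts that the proof of \cite[Proposition 9]{graberpandh} goes through verbatim and points to \cite[Section 2]{SchmittvanZelm} for the details; your excess-bundle computation is a correct rendering of that argument. The gap is in the first half. You take $\Gamma$ to be the full dual graph of the glued curve $C$. With that choice the $(A,B)$-structure you extract is \emph{not} generic in general: take $A=B$ the trivial graph, so the fibre product is $\M_{g,n,a}$ itself and the only generic $(A,B)$-graph is the trivial one, yet a nodal $C$ has a nontrivial dual graph whose edges lie in neither $E(A)$ nor $E(B)$. Your argument for genericity is correspondingly circular: the clause ``the generic condition requires both half-edges to lie in the image of $H(B)$'' invokes the very property you are trying to establish. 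What your reasoning actually proves is the (true, but converse) implication that \emph{if} the structure is generic \emph{then} $E(\Gamma)=E(A)\cup E(B)$.

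The correct construction takes $\Gamma$ to be the graph whose edges are only those nodes of $C$ created by the $A$-gluing or by the $B$-gluing (equivalently, the contraction of the full dual graph along all remaining edges), with vertices the connected components of the partial normalization of $C$ at those nodes; each such component is an arbitrary prestable curve and hence a point of the corresponding $\M_{g(v),n(v),a(v)}$, which is why $\M_\Gamma$ is the full product of moduli stacks and not merely a stratum. Genericity then holds by construction. This also dissolves the worry you flag at the end: the full dual graph is \emph{not} locally constant in families, but the set of $A$- and $B$-nodes is, since these nodes are the images of the gluing sections (compare the argument in Lemma \ref{Lem:gluingmaps}); so the resulting generic $(A,B)$-structure is locally constant and one really obtains a disjoint union. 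With this correction your identification $E(\Gamma)\setminus E(B)=E(A)\setminus(E(A)\cap E(B))$ and the rest of the excess-bundle computation go through unchanged.
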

\begin{proof}
The proof from \cite[Proposition 9]{graberpandh} of the analogous result for the moduli spaces of stable curves goes through verbatim (see also \cite[Section 2]{SchmittvanZelm} for a more detailed version of the argument).
\end{proof}

Using the projection formula, we can then also intersect tautological classes.
\begin{corollary} \label{Cor:tautproduct}
Given decorated stratum classes $[A,\alpha]$, $[B,\beta]$ on $\M_{g,n,a}$, their product is given by
\begin{equation}
    [A,\alpha] \cdot [B,\beta] = \sum_{\Gamma \in \mathcal{G}_{A,B}} (\xi_\Gamma)_* \left(\xi_{\Gamma \to A}^* \alpha \cdot \xi_{\Gamma \to B}^* \beta \cdot c_{\mathrm{top}}(E_\Gamma) \right).
\end{equation}
\end{corollary}

% \subsubsection*{Intersection products}

\subsubsection*{Pushforwards and pullbacks by forgetful maps of points}
In this section, we look at the behaviour of tautological classes under the forgetful map $\pi: \M_{g,n+1,a} \to \M_{g,n,a}$, which is the universal curve over $\M_{g,n,a}$. As such, it is both flat and proper, so we can compute pullbacks as well as pushforwards. We will start with pullbacks.

\begin{proposition} \label{Pro:glueforgetpullback}
Given an $\A$-valued prestable graph $\Gamma$ for $\M_{g,n,a}$, we have a commutative diagram
\begin{equation} \label{eqn:glueforgetpullback}
\begin{tikzcd}
 \coprod_{v \in V(\Gamma)} \M_{\widehat \Gamma_v} \arrow[r,"\xi_{\widehat \Gamma_v}"] \arrow[d,"\pi_v"] & \M_{g,n+1,a} \arrow[d,"\pi"]\\
 \M_\Gamma \arrow[r,"\xi_\Gamma"] & \M_{g,n,a}
\end{tikzcd}
\end{equation}
where the graph $\widehat \Gamma_v$ is obtained from $\Gamma$ by adding the marking $n+1$ at vertex $v$ and the map $\pi_v$ is the identity on the factors of $\M_{\widehat \Gamma_v}$ for vertices $w \neq v$ and the forgetful map of marking $n+1$ at the vertex $v$. The induced map
\begin{equation} \label{eqn:univcurvegluing} \coprod_{v \in V(\Gamma)} \M_{\widehat \Gamma_v} \to \M_\Gamma \times_{\M_{g,n,a}} \M_{g,n+1,a}\end{equation}
%is a local complete intersection morphism and 
satisfies that the fundamental class on the left pushes forward to the fundamental class on the right.
\end{proposition}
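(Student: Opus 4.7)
The plan is to verify commutativity of the diagram separately from the fundamental class identity, and for the second part to identify the fiber product with the universal curve over $\M_\Gamma$ so that its irreducible components match the moduli spaces $\M_{\widehat\Gamma_v}$.

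\textbf{Commutativity.} First I would check the commutativity of \eqref{eqn:glueforgetpullback} directly on the level of moduli. A point of $\M_{\widehat\Gamma_v}$ is a collection of $\A$-valued prestable curves $(C_w,(p_h)_{h\in H(w)})_{w\in V(\Gamma)}$ together with one extra marking $p_{n+1}$ on $C_v$. Following this around the square, both compositions first glue the $C_w$ along the edges of $\Gamma$ to produce $(C,p_1,\ldots,p_n,p_{n+1})\in \M_{g,n+1,a}$ and then forget $p_{n+1}$ (on the right) or forget $p_{n+1}$ on the $v$-factor and then glue (on the left); the result is the same glued curve $(C,p_1,\ldots,p_n)\in \M_{g,n,a}$.

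\textbf{Identifying the fiber product with the universal curve over $\M_\Gamma$.} By Proposition \ref{Pro:Mgnaprop}, $\pi:\M_{g,n+1,a}\to\M_{g,n,a}$ \emph{is} the universal curve, so the fibre product is nothing but the pullback $\C_\Gamma\to \M_\Gamma$ of the universal curve along $\xi_\Gamma$. Its fiber over a point $(C_w)_w\in\M_\Gamma$ is precisely the nodal curve $C=\bigsqcup_w C_w/\!\sim$. Consequently $\C_\Gamma$ decomposes as a union of irreducible components indexed by the vertices $v\in V(\Gamma)$: the component over $v$ is the pullback to $\M_\Gamma$ of the universal curve over $\M_{g(v),n(v),a(v)}$, i.e.\ of $\M_{g(v),n(v)+1,a(v)}$. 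Any two such components meet along the sections cut out by the edges of $\Gamma$, which form a closed substack of codimension one in $\C_\Gamma$.

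\textbf{The map is an open immersion of each summand onto a dense open of a component.} By construction, $\M_{\widehat\Gamma_v}$ parametrizes configurations in which $p_{n+1}$ is a \emph{smooth} point of $C_v$. Via the moduli interpretation above, this identifies $\M_{\widehat\Gamma_v}$ with the open substack of the component over $v$ obtained by removing the sections corresponding to the edges incident to $v$ (so removing a codimension-one closed substack). The disjoint union $\coprod_{v}\M_{\widehat\Gamma_v}$ therefore maps as an open immersion onto the complement of the node locus in $\C_\Gamma$, hitting each irreducible component in a dense open.

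\textbf{Fundamental class identity.} Since the fibre product $\C_\Gamma$ is equidimensional of dimension $\dim\M_\Gamma+1$ (flat base change of the smooth $1$-dimensional morphism $\pi$), its fundamental class is the sum over $v$ of the fundamental classes of its irreducible components. Each $\M_{\widehat\Gamma_v}$ is equidimensional of the same dimension and sits as a dense open substack of the component indexed by $v$; taking the cycle-theoretic closure (which is how proper pushforward acts on a locally closed immersion with dense image of full dimension, cf.\ the construction recalled in Appendix \ref{sec:Properpushforward}) therefore sends $[\M_{\widehat\Gamma_v}]$ to the fundamental class of that component. Summing over $v$ produces the fundamental class of $\C_\Gamma=\M_\Gamma\times_{\M_{g,n,a}}\M_{g,n+1,a}$, as required. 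The only technical point to be careful about is the matching of components of $\C_\Gamma$ with the spaces $\M_{\widehat\Gamma_v}$, where one must use that the universal curve of a moduli of $\A$-valued prestable curves is again such a moduli (Proposition \ref{Pro:Mgnaprop}); once this is in hand, the statement is purely dimensional.
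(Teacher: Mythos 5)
Your setup is right — the commutativity check and the identification of the fiber product with the universal curve $\C_\Gamma$ over $\M_\Gamma$, decomposed into components indexed by the vertices, agree with the paper. The error is in your third step. You claim that $\M_{\widehat\Gamma_v}$ parametrizes configurations in which $p_{n+1}$ is a smooth point of $C_v$ and that \eqref{eqn:univcurvegluing} is an open immersion onto the complement of the node locus. Neither is true. The factor $\M_{g(v),n(v)+1,a(v)}$ of $\M_{\widehat\Gamma_v}$ is the \emph{full} universal curve over $\M_{g(v),n(v),a(v)}$ (Proposition \ref{Pro:Mgnaprop}); the point $p_{n+1}$ is a smooth marked point of the curve being parametrized, but after $\pi_v$ contracts the component carrying it, its image in $C_v$ may be a node or one of the gluing markings $q_h$ — these are exactly the boundary divisors listed in Corollary \ref{Cor:modinterpretunivcurve}. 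Consequently $\coprod_v\M_{\widehat\Gamma_v}$ contains, for each edge $(h,h')$ of $\Gamma$, the two divisors where $p_{n+1}$ sits on a contracted component attached at $q_h$ (resp.\ $q_{h'}$), and the map \eqref{eqn:univcurvegluing} identifies these two divisors with the node-section of $\C_\Gamma$ corresponding to that edge. Fiberwise it is the partial normalization $\coprod_v C_v\to C$: finite, surjective, and $2$-to-$1$ over the gluing nodes, not an open immersion.

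This matters for the logic, not just the bookkeeping: if the map really were an open immersion onto the complement of a codimension-one locus it would fail to be proper, and the pushforward asserted in the statement would not even be defined; your appeal to ``cycle-theoretic closure along a locally closed immersion'' is then justifying the wrong operation. The repair is exactly the paper's argument: the map \eqref{eqn:univcurvegluing} is proper (it is a gluing of the universal curves $\M_{\widehat\Gamma_v}$ along the divisors just described) and it restricts to an isomorphism over the open locus where $p_{n+1}$ does not lie on a contracted component, hence is an isomorphism at the generic point of every component of the target. For a proper map between equidimensional stacks of the same dimension that is generically an isomorphism on each component, the fundamental class pushes forward to the fundamental class, which is the desired conclusion.
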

\begin{proof}
This follows from the definition of the gluing map $\xi_\Gamma$: giving the map $\xi_\Gamma$ is the same as giving the universal curve over $\M_\Gamma$ and this curve is obtained by gluing the universal curves $\M_{\widehat \Gamma_v}$ over the various factors along the half-edges connected in $\Gamma$. The map \eqref{eqn:univcurvegluing} is obtained by taking, for each edge $\{h_1,h_2\} \in E(\Gamma)$ the loci inside $\M_{\widehat \Gamma_{v_i}}$ where marking $p_{n+1}$ and marking $q_{h_i}$ are on a contracted component and identifying them. Thus, if $p_{n+1}$ is not on a contracted component, the map is an isomorphism in a neighborhood. Therefore the map (\ref{eqn:univcurvegluing}) is an isomorphism at the general point of each component of the right hand side and the fundamental class pushes forwards to the fundamental class. 
\end{proof}
\begin{corollary} \label{Cor:forgetfulpullback}
Given a tautological class $[\Gamma,\alpha]$ write $\alpha = \prod_{v \in V(\Gamma)} \alpha_v$ with $\alpha_v$ the factors of $\alpha$ located at vertex $v$ of $\Gamma$. Then we have
\[\pi^* [\Gamma,\alpha] = \sum_{v \in V(\Gamma)} [\widehat \Gamma_v, (\pi_v^* \alpha_v) \cdot \prod_{w \neq v} \alpha_w].\]
\end{corollary}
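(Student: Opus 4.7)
The plan is to combine flat base change for the forgetful morphism $\pi$ with the auxiliary diagram of Proposition~\ref{Pro:glueforgetpullback}. Since $\pi$ is flat by Proposition~\ref{Pro:Mgnaprop} and $\xi_\Gamma$ is proper by the $\A$-valued analogue of Lemma~\ref{Lem:gluingmaps}, I would first apply flat base change to the cartesian square
\[
\begin{tikzcd}
\M_\Gamma \times_{\M_{g,n,a}} \M_{g,n+1,a} \arrow[r,"\widetilde{\xi}_\Gamma"] \arrow[d,"\widetilde{\pi}"] & \M_{g,n+1,a} \arrow[d,"\pi"]\\
\M_\Gamma \arrow[r,"\xi_\Gamma"] & \M_{g,n,a}
\end{tikzcd}
\]
to rewrite $\pi^*[\Gamma,\alpha] = \pi^*(\xi_\Gamma)_*\alpha = (\widetilde\xi_\Gamma)_*\widetilde\pi^*\alpha$.

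Next, let $\Phi\colon \coprod_{v} \M_{\widehat\Gamma_v} \to \M_\Gamma \times_{\M_{g,n,a}}\M_{g,n+1,a}$ denote the map from Proposition~\ref{Pro:glueforgetpullback}. The commutative diagram \eqref{eqn:glueforgetpullback} shows that, restricted to the $v$-th component, the compositions with the two projections of the fibre product are $\widetilde\xi_\Gamma \circ \Phi|_{\M_{\widehat\Gamma_v}} = \xi_{\widehat\Gamma_v}$ and $\widetilde\pi \circ \Phi|_{\M_{\widehat\Gamma_v}} = \pi_v$. Since $\Phi$ is proper and componentwise birational (as observed in the proof of Proposition~\ref{Pro:glueforgetpullback}) and satisfies $\Phi_*[\coprod_v \M_{\widehat\Gamma_v}] = [\M_\Gamma \times_{\M_{g,n,a}}\M_{g,n+1,a}]$, the projection formula gives $\Phi_*\Phi^*\gamma = \gamma$ for every cycle class $\gamma$ on the fibre product. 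Applying this to $\gamma = \widetilde\pi^*\alpha$ yields
\[
(\widetilde\xi_\Gamma)_*\widetilde\pi^*\alpha \;=\; (\widetilde\xi_\Gamma \circ \Phi)_* \Phi^*\widetilde\pi^*\alpha \;=\; \sum_{v\in V(\Gamma)} (\xi_{\widehat\Gamma_v})_* \pi_v^*\alpha.
\]
To conclude, since $\pi_v$ is the identity on each factor $\M_{g(w),n(w),a(w)}$ for $w \neq v$ and only forgets the extra marking on the $v$-th factor, the decoration pulls back as $\pi_v^*\alpha = (\pi_v^*\alpha_v)\cdot \prod_{w\neq v}\alpha_w$, and the claimed formula follows from Definition~\ref{def:decstratclass}.

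The main obstacle will be verifying that flat base change, the projection formula, and the ``fundamental class pushes to fundamental class'' input are all available in the locally finite type stack setting. These statements should all reduce to their finite-type analogues from \cite{kreschartin} via the exhaustion framework of Appendix~\ref{sect:appendixChowlft}: the relevant classes live in a fixed codimension, so their behaviour is controlled on any sufficiently large finite-type open substack. No new geometric input beyond Proposition~\ref{Pro:glueforgetpullback} should be necessary; the remainder is formal bookkeeping with the diagram.
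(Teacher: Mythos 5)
Your proposal follows essentially the same route as the paper: the paper's proof consists of a single appeal to Lemma \ref{Lem:OperationalChowcompatible}, and that lemma's proof is exactly your chain of steps (flat base change across the cartesian square, then the projection formula against the proper birational map $\Phi$ with $\Phi_*[\coprod_v \M_{\widehat\Gamma_v}]=[\M_\Gamma\times_{\M_{g,n,a}}\M_{g,n+1,a}]$). So you have unpacked the paper's argument rather than found a new one.

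One step is stated too strongly and, as written, would fail: the claim that ``the projection formula gives $\Phi_*\Phi^*\gamma=\gamma$ for every cycle class $\gamma$ on the fibre product.'' The map $\Phi$ is proper and componentwise birational but is neither flat nor lci, so $\Phi^*\gamma$ is not even defined for an arbitrary cycle class $\gamma$, and the projection formula in the form $\Phi_*(\Phi^*\gamma\cap[\,\cdot\,])=\gamma\cap\Phi_*[\,\cdot\,]$ only makes sense when $\gamma$ is something you can pull back along arbitrary morphisms, i.e.\ an operational class (a polynomial in Chern classes). This is precisely why the paper routes the argument through $\CHOP^*$: the decoration $\alpha$ is a monomial in $\kappa$ and $\psi$-classes, hence an operational class capped with $[\M_\Gamma]$, and then $\widetilde\pi^*\alpha$ is again of that form, so the projection formula applies to it against $\Phi$. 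Your application is therefore salvageable verbatim once you restrict the claim to operational classes and write $\widetilde\pi^*\alpha$ as $(\widetilde\pi^*\alpha_{\mathrm{op}})\cap[\M_\Gamma\times_{\M_{g,n,a}}\M_{g,n+1,a}]$; this is exactly the content of Lemma \ref{Lem:OperationalChowcompatible}, which you should cite or reprove rather than asserting the identity for all $\gamma$. The remaining points (flat base change in the locally finite type setting, reduction to finite-type opens in fixed codimension, and the computation of $\pi_v^*\alpha$) are handled correctly.
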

\begin{proof}
The class $[\Gamma,\alpha]$ is represented by $\xi_{\Gamma *}\left(\alpha\cap \,[\M_\Gamma]\right)$ where $\alpha$ is an operational Chow class in $\CHOP^*(\M_\Gamma)$. We refer the reader to Appendix \ref{Sect:OperationalChow} for definitions and properties of these operational classes.
By Proposition \ref{Pro:glueforgetpullback} the diagram (\ref{eqn:glueforgetpullback}) together with the map (\ref{eqn:univcurvegluing}) satisfies assumptions in Lemma \ref{Lem:OperationalChowcompatible}. Therefore the equality follows from Lemma \ref{Lem:OperationalChowcompatible}.
\end{proof}
%\jocomment{Here we are a bit sloppy: we use the compatibility of proper pushforward and flat pullback (which we have not proved) and we use the projection formula for the map \eqref{eqn:univcurvegluing}, which is proper but not flat (is it lci? the target is not normal). We use something similar in the proof of Proposition \ref{Pro:psikappaforgetpullback} below. I think maybe we should state these compatibilities either as part of Theorem \ref{Thm:properpushforward} or in a line below it.}
%\jocomment{Mmh, I guess that one solution could be to argue that decorated stratum classes actually live in operational Chow. But this would force us to be really careful everywhere.
%I wonder if the following statement is true: let 
%\[U \xrightarrow{p} W \xrightarrow{\pi'} X\]
%with $p$ proper and birational, $\pi'$ flat such that the composition $s=\pi' \circ p$ is also flat (and we can assume that $U,X$ are smooth, if we want). Then we have
%\[(\pi')^* \alpha = p_* s^* \alpha \text{ for }\alpha \in \CH_*(X).\]
%I think if we have this, then we can make all the various proofs in this section work (notation is taken from your diagram above, and I think a small computation shows that combining this with compatibility of proper pushforward and flat pullback in fibre diagrams gives us the statement we want, for $\alpha \in \CH_*(X)$) without trying to use a projection formula for the possibly-bad map $p$.}
The above corollary shows that to finish our understanding of pullbacks of tautological classes, it suffices to understand how $\kappa$ and $\psi$-classes pull back. 
\begin{proposition}  \label{Pro:psikappaforgetpullback}
For the universal curve morphism $\pi: \M_{g,n+1,a} \to \M_{g,n,a}$ we have
\begin{align}
    \pi^* \psi_i &= \psi_i - D_{i,n+1}\,,\\
    \pi^* \kappa_a &= \kappa_a - \psi_{n+1}^a\,,
\end{align}
where $D_{i,n+1} \subset \M_{g,n+1,a}$ is the image of the section $\sigma_i$ of $\pi$ corresponding to the $i$-th marked point. It can be seen as the tautological class corresponding to the (undecorated) graph (\ref{eqn:sectiongraph}) above.
\end{proposition}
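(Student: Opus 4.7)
The plan is to reduce both formulas to a comparison between the universal curve $\pi': \M_{g,n+2,a} \to Y$ over $Y = \M_{g,n+1,a}$ and the pullback family $q_2: Y \times_X Y \to Y$ obtained by base-changing $\pi$ by itself (here $X = \M_{g,n,a}$ with other projection $q_1$). I would first note that by flat base change $\omega_{q_2} = q_1^*\omega_\pi$. The key geometric object is a natural morphism $r: \M_{g,n+2,a} \to Y \times_X Y$ over $Y$, given by $z \mapsto (F(z), \pi'(z))$, where $F: \M_{g,n+2,a} \to Y$ is the forgetful map dropping $p_{n+1}$ (with contraction of an unstable bubble when necessary). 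This $r$ should be an isomorphism outside the boundary divisors $B_j \subset \M_{g,n+2,a}$, $j = 1, \ldots, n$, where $B_j$ corresponds to the $\A$-valued prestable graph with a genus-zero, value-$\zero$ bubble carrying exactly $\{j, n+1, n+2\}$; over $D_{j, n+1} \subset Y$ this bubble is precisely what $r$ contracts. A local adjunction computation for the contraction of a rational tail then yields the discrepancy
\[\omega_{\pi'} = r^*\omega_{q_2} \otimes \mathcal{O}\!\left(\sum_{j=1}^n B_j\right).\]

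For the $\psi_i$ formula, let $\widetilde\sigma_i: Y \to Y \times_X Y$ be the pullback of $\sigma_i$ under $\pi$ and $\sigma'_i: Y \to \M_{g,n+2,a}$ the $i$-th section of $\pi'$. Flat base change gives $\pi^*\psi_i = \widetilde\sigma_i^* c_1(\omega_{q_2})$, and by construction $r \circ \sigma'_i = \widetilde\sigma_i$, so the discrepancy formula yields
\[\pi^*\psi_i = \psi_i - \sum_{j=1}^n (\sigma'_i)^*[B_j].\]
I would then perform a direct set-theoretic analysis of $\sigma'_i(Y) = D_{i, n+2}$ to show that $\sigma'_i(Y) \cap B_j$ is empty for $j \neq i$ and equals the transverse intersection along $\sigma'_i(D_{i, n+1})$ when $j = i$; this gives $(\sigma'_i)^*[B_j] = \delta_{ij}[D_{i, n+1}]$, completing this part.

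For the $\kappa_a$ formula, flat base change together with $r_*r^* = \mathrm{id}$ (since $r$ is birational and proper), $\pi' = q_2 \circ r$ and $F = q_1 \circ r$ rewrites
\[\pi^*\kappa_a = q_{2*}\big((q_1^*\psi_{n+1})^{a+1}\big) = \pi'_*\big((F^*\psi_{n+1})^{a+1}\big).\]
Applying the $\psi$-formula just established to the forgetful map $F$ (which falls under the same framework by the symmetric roles of $p_{n+1}$ and $p_{n+2}$) gives $F^*\psi_{n+1} = \psi_{n+2} - [D_{n+1, n+2}]$ with $D_{n+1, n+2} = \sigma'_{n+1}(Y)$. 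Expanding the $(a+1)$-st power and using the self-intersection $[D_{n+1, n+2}]^k = (-1)^{k-1}(\sigma'_{n+1})_*(\psi_{n+1}^{k-1})$, the projection formula, and the key vanishing $(\sigma'_{n+1})^*\psi_{n+2} = 0$ (the rigid rational bubble with $\{n+1, n+2\}$ has trivial cotangent at $p_{n+2}$), every term with $k < a+1$ drops out after push-forward via $\pi'$. Using $\pi' \circ \sigma'_{n+1} = \mathrm{id}_Y$ then gives $\pi^*\kappa_a = \kappa_a - \psi_{n+1}^a$.

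The hard part will be the rigorous construction of the birational morphism $r$ in the $\A$-valued setting and the verification of the discrepancy formula relating $\omega_{\pi'}$ and $r^*\omega_{q_2}$; once these are in place, all remaining manipulations are standard intersection-theoretic formalities within the framework developed earlier in this section.
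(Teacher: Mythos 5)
Your argument is correct in substance, but it takes a genuinely different route from the paper for the $\psi$-formula. The paper also exploits the diagram \eqref{eqn:forgetforgetpullback} in which $\M_{g,n+2,a}$ maps properly and birationally to the fibre product $\M_{g,n+1,a}\times_{\M_{g,n,a}}\M_{g,n+1,a}$, but it avoids any discussion of dualizing sheaves by starting from the identity $\psi_i=-\pi_*(D_{i,n+1}^2)$ (a consequence of the normal bundle of $\sigma_i$ being $\sigma_i^*\omega_\pi^\vee$, or of Corollary \ref{Cor:tautproduct}); the whole computation then reduces to the already-established combinatorial calculus, namely the pullback of a boundary class along the forgetful map (Corollary \ref{Cor:forgetfulpullback}) and the excess intersection formula. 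You instead work directly with $\psi_i=\sigma_i^*c_1(\omega_\pi)$ and feed in a new geometric input, the discrepancy formula $\omega_{\pi'}=r^*\omega_{q_2}\otimes\mathcal{O}(\sum_j B_j)$. That formula is true --- checking degrees on the fibres over $D_{j,n+1}$, the contracted tail $R$ carries $\omega_{\pi'}|_R=\mathcal{O}_R(-1)$ and $r^*\omega_{q_2}|_R=\mathcal{O}_R$, while over the node locus the two sheaves agree --- but it is the entire content of the proposition, it is proved nowhere in the paper, and establishing it rigorously in the $\A$-valued setting costs about as much as the paper's chosen route. What your version buys is the classical Knudsen-style argument, which transfers verbatim to other comparison situations; what the paper's version buys is that no input beyond Section \ref{calculus} is needed. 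Your $\kappa$-computation is essentially what the paper means by ``similarly''.

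Three smaller points. First, $r$ is \emph{not} an isomorphism away from $\bigcup_j B_j$: it also contracts the codimension-two locus where $p_{n+2}$ lies on a trivalent value-$\zero$ bubble inserted at a node together with $p_{n+1}$. This is harmless, since a line bundle identity on the smooth stack $\M_{g,n+2,a}$ is determined in codimension one and the dualizing sheaves agree there, but you should state the discrepancy formula as an equality of divisor classes checked at generic points of the exceptional divisors rather than claim $r$ is an isomorphism off $\bigcup_j B_j$. Second, your identification $(\sigma'_i)^*[B_j]=\delta_{ij}\,[D_{i,n+1}]$ needs no ad hoc transversality analysis: apply Corollary \ref{Cor:tautproduct} to the two one-edge graphs; the unique generic structure is the three-vertex chain with $p_{n+1}$ on the middle bubble and $\{p_i,p_{n+2}\}$ on the outer one, its edge sets are disjoint, so there is no excess class and the product is the pushforward of the fundamental class, which is exactly $(\sigma'_i)_*[D_{i,n+1}]$. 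Third, in the $\kappa$-step it is the terms with $1\le k\le a$ that vanish (already before pushforward, because $(\sigma'_{n+1})^*\psi_{n+2}=0$ kills them); the $k=0$ term survives and produces $\kappa_a$, and the $k=a+1$ term produces $-\psi_{n+1}^a$.
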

\begin{proof}
The statement is a generalization of the classical pullback formulas for $\Mbar_{g,n}$ (which are the case $\A=\{\zero\}$). 
A convenient way to prove it is to use that 
\begin{equation} \label{eqn:psiboundarysquarepushforward}
\psi_i = -\pi_*(D_{i,n+1}^2)\,.  
\end{equation}
{To show this, we note that $\sigma_i$ can be identified with the gluing map
\[\sigma_i\colon \M_{g,n,a}\times \Mbar_{0,\{\bullet, i,n+1\},\zero}\to \M_{g,n+1,a}\,,\]
where we glue the $i$-th marking on $\M_{g,n,a}$ with the marking $\bullet$ on $\Mbar_{0,\{\bullet, i,n+1\},\zero}$.
Then indeed the locus $D_{i,n+1}$ is the image of the above gluing map (similar to the usual case of stable maps)} and equation \eqref{eqn:psiboundarysquarepushforward} follows from Corollary \ref{Cor:tautproduct}. On the other hand, it can also be seen directly from the fact that $\sigma_i$ is a closed embedding with normal bundle $\sigma_i^*(\omega_\pi^\vee)$.

Now we have a commutative diagram
\begin{equation} \label{eqn:forgetforgetpullback}
\begin{tikzcd}
 \M_{g,n+2,a} \arrow[r,"\pi_{n+1}"] \arrow[d,"\pi_{n+2}"] & \M_{g,n+1,a} \arrow[d,"\pi"]\\
 \M_{g,n+1,a} \arrow[r,"\pi"] & \M_{g,n,a}
\end{tikzcd}
\end{equation}
and the space in the upper left maps birationally to the fibre product of the two forgetful maps. Then 
\begin{align*}
\pi^* \psi_i &= - \pi^* \pi_* D_{i,n+1}^2 = - (\pi_{n+1})_* (\pi_{n+2}^* D_{i,n+1}^2)  \\
&=-(\pi_{n+1})_*\left(
\begin{tikzpicture}[scale=0.7, baseline=-3pt,label distance=0.3cm,thick,
    virtnode/.style={circle,draw,scale=0.5}, 
    nonvirt node/.style={circle,draw,fill=black,scale=0.5} ]
    % \node [virtnode] (A) {} node[below]{$A_1$}; works
    \node [nonvirt node,label=below:$(g{,}a)$] (A) {};
    \node at (2,0) [virtnode,label=below:$(0{,}\zero)$] (B) {};
    \draw [-] (A) to (B);
    \node at (-.7,.5) (n1) {};
    \node at (-1.2,.5) (y1) {$n+2$};
    % \node at (-.86,0) (n0) {$I_1$};
    % \node at (-.7,-.5) (n2) {$n+1$};
    \draw [-] (A) to (n1);
    % \draw [-] (A) to (n2);
    \node at (3,.5) (m1) {};
    \node at (3,-.5) (m2) {};
    \node at (3.2,.5) (x1) {$i$};
    \node at (3.5,-.5) (x2) {$n+1$};
    \draw [-] (B) to (m1);
    \draw [-] (B) to (m2);    
    \end{tikzpicture}
+
\begin{tikzpicture}[scale=0.7, baseline=-3pt,label distance=0.3cm,thick,
    virtnode/.style={circle,draw,scale=0.5}, 
    nonvirt node/.style={circle,draw,fill=black,scale=0.5} ]
    % \node [virtnode] (A) {} node[below]{$A_1$}; works
    \node [nonvirt node,label=below:$(g{,}a)$] (A) {};
    \node at (2,0) [virtnode,label=below:$(0{,}\zero)$] (B) {};
    \draw [-] (A) to (B);
    %\node at (-.7,.5) (n1) {};
    % \node at (-.86,0) (n0) {$I_1$};
    % \node at (-.7,-.5) (n2) {$n+1$};
    %\draw [-] (A) to (n1);
    %
    \node at (3,.5) (m1) {};
    \node at (3,0) (m3) {};
    \node at (3,-.5) (m2) {};
    \node at (3.2,.5) (p1) {$i$};
    \node at (3.5,0) (p2) {$n+1$};
    \node at (3.5,-.5) (p3) {$n+2$};
    \draw [-] (B) to (m1);
    \draw [-] (B) to (m2);    
    \draw [-] (B) to (m3);
    \end{tikzpicture}
\right)^2\\
&= \psi_i -2D_{i,n+1}+D_{i,n+1}=\psi_i-D_{i,n+1}.
\end{align*}
Similarly, using the same diagram, the definition of $\kappa_a$ and the pullback formula for $\psi$, one concludes the pullback formula for $\kappa_a$. 
\end{proof}
% \textbf{Note}: we should write the proof really carefully. One might be tempted to claim for example that \[\psi_i = \pi_*(- [(g=0;i,n+1,\zero) - (g;\{1,\ldots,n\}\setminus \{i\}; a)]^2)\]
% but note that one must be careful: for $\A=\{\zero,\one\}$ we have
% \[[(g=0;1,2,\zero) - (0;\emptyset; \one)]^2 = \text{expected stuff} + [(g=0;1,2,\zero) - (0;\emptyset; \one) - (0;\emptyset; \one)].\]
% I think this last term is killed by the pushforward, but one needs to be careful.

We now turn to the question how to push forward tautological classes $[\Gamma, \alpha] \in \R^*(\M_{g,n+1,a})$ under the map $\pi$.
\begin{proposition} \label{Pro:tautforgetpushforward}
Let $[\Gamma, \alpha] \in \R^*(\M_{g,n+1,a})$  with $\alpha= \prod_{v \in V(\Gamma)} \alpha_v$. Let $v \in V(\Gamma)$  be the vertex incident to $n+1$ and let $\Gamma'$ be the graph obtained from $\Gamma$ by forgetting the marking $n+1$ and stabilizing if the vertex $v$ becomes unstable. There are two cases:
\begin{itemize}
    \item if the vertex $v$ remains stable, then 
    \[\pi_*  [\Gamma,\alpha] = (\xi_{\Gamma'})_*\left((\pi_v)_* \alpha_v \cdot \prod_{w \neq v} \alpha_w \right), \]
    where $\pi_v$ is the forgetful map of marking $n+1$ of vertex $v$.
    \item if the vertex $v$ becomes unstable, then $g(v)=0$, $n(v)=3$ and $a(v)=\zero$. If $\alpha_v \neq 1$ then $[\Gamma, \alpha]=0$. Otherwise, we have 
    \[\pi_*  [\Gamma,\alpha] = [\Gamma', \prod_{w \neq v} \alpha_w]\,.\]
\end{itemize}
\end{proposition}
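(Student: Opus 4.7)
The plan is to compute $\pi_* [\Gamma,\alpha] = \pi_* \xi_{\Gamma*}\alpha$ by factoring through a commutative square involving the gluing map $\xi_{\Gamma'}$ of the stabilized graph, and then applying a projection-formula argument adapted to the product structure $\M_\Gamma = \prod_{v} \M_{g(v),n(v),a(v)}$.

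\textbf{Case 1 (vertex $v$ remains stable).} Here $\Gamma = \widehat{\Gamma'}_v$ in the notation of Proposition~\ref{Pro:glueforgetpullback}, applied with the role of $\Gamma$ there played by $\Gamma'$. Restricting that proposition's commutative diagram to the $v$-summand yields the square
\begin{equation*}
\begin{tikzcd}
\M_\Gamma \arrow[r,"\xi_\Gamma"] \arrow[d,"\pi_v"] & \M_{g,n+1,a} \arrow[d,"\pi"] \\
\M_{\Gamma'} \arrow[r,"\xi_{\Gamma'}"] & \M_{g,n,a}
\end{tikzcd}
\end{equation*}
in which $\pi_v$ acts as the universal-curve forgetful map on the $v$-factor and as the identity on every other factor. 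Proper pushforward functoriality gives $\pi_*\xi_{\Gamma*}\alpha = \xi_{\Gamma'*}(\pi_v)_*\alpha$, and because $\pi_v$ only modifies the $v$-factor of the product, the exterior projection formula yields $(\pi_v)_*\alpha = ((\pi_v)_*\alpha_v)\cdot\prod_{w\neq v}\alpha_w$, from which the claimed identity follows.

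\textbf{Case 2 (vertex $v$ becomes unstable).} Stability of a genus-zero $\A$-vertex with value $\zero$ requires at least three special points, so the unstable case forces $(g(v),n(v),a(v)) = (0,3,\zero)$; in particular $\M_{g(v),n(v),a(v)} = \M_{0,3,\zero}$ is a point. If $\alpha_v \neq 1$, then $\alpha_v$ is a monomial of strictly positive degree in $\psi$ and $\kappa$ classes on a point, hence zero, so $[\Gamma,\alpha] = 0$ and both sides vanish. If $\alpha_v = 1$, the projection $\M_\Gamma \to \prod_{w\neq v}\M_{g(w),n(w),a(w)}$ is an isomorphism, and a brief case analysis of the two combinatorial possibilities for the remaining two special points at $v$ (either a leg and a half-edge of a genuine edge, or two half-edges of genuine edges) identifies the target with $\M_{\Gamma'}$. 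Under this identification I would verify commutativity of
\begin{equation*}
\begin{tikzcd}
\M_\Gamma \arrow[r,"\xi_\Gamma"] \arrow[d,"\sim"] & \M_{g,n+1,a} \arrow[d,"\pi"] \\
\M_{\Gamma'} \arrow[r,"\xi_{\Gamma'}"] & \M_{g,n,a}
\end{tikzcd}
\end{equation*}
after which the formula reads $\pi_*\xi_{\Gamma*}\alpha = \xi_{\Gamma'*}\alpha = [\Gamma',\prod_{w\neq v}\alpha_w]$.

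\textbf{Main obstacle.} The only genuinely delicate step will be verifying commutativity of the Case~2 square as a diagram of stacks: in each combinatorial subcase one must confirm that contracting the unstable $\PP^1$-component $C_v$ inside the glued family reproduces precisely the curve obtained by gluing according to $\Gamma'$, with no stray automorphism factors surviving the quotient by the trivial $v$-factor $\M_{0,3,\zero}$. Once these local geometric identifications are in hand, the proposition follows from proper pushforward functoriality together with the unit-versus-positive-degree dichotomy on the point $\M_{0,3,\zero}$.
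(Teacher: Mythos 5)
Your proposal is correct and follows essentially the same route as the paper: the paper's proof likewise rests on the observation that $\pi \circ \xi_\Gamma$ factors through $\xi_{\Gamma'}$ (giving Case 1 by pushforward functoriality and the projection formula on the product), and on the fact that $\M_{0,3,\zero}=\Mbar_{0,3}=\operatorname{Spec} k$, so any nontrivial decoration at the contracted vertex vanishes. Your write-up simply makes explicit the diagram-commutativity checks that the paper leaves implicit.
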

\begin{proof}
The result follows from the fact that the composition of the gluing map $\xi_\Gamma$ and the forgetful map $\pi$ factors through the gluing map $\xi_{\Gamma'}$ downstairs. In the second part, we use that $\M_{0,3,\zero}=\Mbar_{0,3}=\operatorname{Spec} k$, so any nontrivial decoration by $\kappa$ and $\psi$-classes on this space vanishes.
\end{proof}
The proposition allows us to reduce to computing forgetful pushforwards of products of $\kappa$ and $\psi$-classes. As in the case of $\Mbar_{g,n}$, these can be computed using the projection formula. Indeed, given a product
\[\alpha = \prod_{a} \kappa_a^{e_a} \cdot \prod_{i=1}^n \psi_i^{\ell_i} \cdot \psi_{n+1}^{\ell_{n+1}} \in \R^*(\M_{g,n+1,a})\]
we can use Proposition \ref{Pro:psikappaforgetpullback} and the known intersection formulas on $\M_{g,n+1,a}$ to write it as
\[\alpha = \pi^* \left( \prod_{a} \kappa_a^{e_a} \cdot \prod_{i=1}^n \psi_i^{\ell_i}\right)\cdot \psi_{n+1}^{\ell_{n+1}} + \text{boundary terms}. \]
Using the projection formula we conclude
\[\pi_*(\alpha) = \left( \prod_{a} \kappa_a^{e_a} \cdot \prod_{i=1}^n \psi_i^{\ell_i}\right)\cdot \kappa_{\ell_{n+1}-1} + \pi_*(\text{boundary terms}), \]
where $\kappa_0=2g-2+n$ and $\kappa_{-1}=0$.
The boundary terms are handled by induction on the degree together with Proposition \ref{Pro:tautforgetpushforward}. 

Together with the previous results of this section, this shows that the $\QQ$-linear span of the decorated strata classes $[\Gamma, \alpha]$ in $\CH^*(\M_{g,n,a})$ is closed under intersections as well as pushforwards under gluing and forgetful maps. Thus, by definition, it equals the tautological ring of $\M_{g,n,a}$, so that we finished the proof of Theorem \ref{Thm:tautringintro}.
%In particular, the space of tautological classes is closed under products, so $\R^*(\M_{g,n,a})$ is a countably generated $\mathbb{Q}$-subalgebra of $\CH^*(\M_{g,n,a})$, finite-dimensional in each degree.

\subsubsection*{Pullbacks by forgetful maps of $\A$-values}
\begin{proposition} \label{Pro:forgetfulAvaluespullback}
For the map $F_\A: \M_{g,n,a} \to \M_{g,n}$ forgetting the $\A$-values on all components of the curve, without stabilizing, we have
\[F_\A^* [\Gamma,\alpha] = \sum_{\substack{(a_v)_{v \in V(\Gamma)}\\\sum_v a_v=a}} [\Gamma_{(a_v)_v}, \alpha]\,,\]
where the sum is over tuples $(a_v)_v$ of elements of $\A$ summing to $a$, such that the $\A$-valuation $v \mapsto a_v$ on the vertices $v$ of $\Gamma$ gives a well-defined $\A$-valued graph $\Gamma_{(a_v)_v}$.
\end{proposition}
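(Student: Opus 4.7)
The plan is to derive the formula via a Cartesian square analysis combined with flat base change for the étale morphism $F_\A$. Concretely, I would first establish the Cartesian diagram
\begin{equation*}
\begin{tikzcd}
\coprod_{(a_v)_v} \M_{\Gamma_{(a_v)_v}} \arrow[r, "\coprod \xi_{\Gamma_{(a_v)_v}}"] \arrow[d, "\coprod G_{(a_v)_v}"'] & \M_{g,n,a} \arrow[d, "F_\A"] \\
\M_\Gamma \arrow[r, "\xi_\Gamma"] & \M_{g,n}
\end{tikzcd}
\end{equation*}
where the disjoint union runs over valid tuples $(a_v)_v$ summing to $a$ and $G_{(a_v)_v}$ is the product over $v \in V(\Gamma)$ of the per-vertex forgetful maps $\M_{g(v),n(v),a_v} \to \M_{g(v),n(v)}$. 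To verify this, I would unpack an $S$-point of the fibre product: it consists of a tuple $(C_v, (p_h)_{h \in H(v)})_v \in \M_\Gamma(S)$ together with an $\A$-valuation on the components of the normalization of the glued curve $C$ with total value $a$ satisfying the $\A$-stability condition. Such a valuation restricts uniquely to a valuation of total value $a_v$ on each $C_v$ (with $\sum_v a_v = a$); since gluing only adds special points to components, the global $\A$-stability reduces to individual stability of each factor in $\M_{g(v),n(v),a_v}$, which is precisely the condition that $\Gamma_{(a_v)_v}$ is a well-defined $\A$-valued prestable graph in the sense of Section \ref{Sect:Avaluedprestable}.

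Next, since $F_\A$ is étale and representable by Proposition \ref{Pro:Mgnaprop} and $\xi_\Gamma$ is proper by Lemma \ref{Lem:gluingmaps}, flat base change gives
\begin{equation*}
F_\A^* [\Gamma, \alpha] = F_\A^* (\xi_\Gamma)_* \alpha = \sum_{(a_v)_v} (\xi_{\Gamma_{(a_v)_v}})_* G_{(a_v)_v}^* \alpha.
\end{equation*}
It then remains to verify that $G_{(a_v)_v}^* \alpha = \alpha$. Since the universal curve $\M_{g(v),n(v)+1,a_v} \to \M_{g(v),n(v),a_v}$ together with its tautological sections and relative dualizing sheaf pulls back from $\C_{g(v),n(v)} \to \M_{g(v),n(v)}$ under $F_{\A,v}$ (using the modular description from Corollary \ref{Cor:modinterpretunivcurve}), each $\psi$- and $\kappa$-class pulls back to the homonymous class. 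Combining with the previous step yields the stated formula.

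The main obstacle lies in the fibre product analysis above: carefully matching the $\A$-stability condition for prestable curves in $\M_{g,n,a}$ with the combinatorial admissibility of the $\A$-valued graph $\Gamma_{(a_v)_v}$. In particular one must argue that the $n(v)$ half-edges at each vertex $v$ (including both legs and half-edges belonging to edges of $\Gamma$) correspond, after gluing, precisely to the special points on $C_v$ relevant for stability, so that local stability at each vertex factor is equivalent to global stability of the glued $\A$-valued curve, and to rule out any hidden contributions from ``stabilization'' that might arise if the two notions did not line up.
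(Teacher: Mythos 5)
Your proposal is correct and follows essentially the same route as the paper: the paper's (very terse) proof likewise identifies the fibre product of $F_\A$ and $\xi_\Gamma$ with the disjoint union of the gluing maps $\xi_{\Gamma_{(a_v)_v}}$ and then notes that $F_\A^*\psi_i=\psi_i$ and $F_\A^*\kappa_a=\kappa_a$. Your write-up simply supplies the details (the moduli-theoretic matching of $\A$-stability with admissibility of $\Gamma_{(a_v)_v}$, flat base change for the étale map $F_\A$, and the base-change compatibility of the universal curve) that the paper leaves implicit.
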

\begin{proof}
The fibre product of $F_\A$ and a gluing map $\xi_{\Gamma}$ is the disjoint union of the gluing maps $\xi_{\Gamma_{(a_v)_v}}$. A short computation shows that $F_\A^* \psi_i = \psi_i$ and $F_\A^* \kappa_a = \kappa_a$.
\end{proof}

\subsubsection*{Pullback by stabilization map}
We saw before that the stabilization morphism $\st: \M_{g,n} \to \Mbar_{g,n}$ is flat, so we can ask how to pull back tautological classes along this morphism. We start by computing the pullback of gluing maps under $\st$.

\begin{proposition} \label{Pro:stgluepullback}
Given a stable graph $\Gamma$ in genus $g$ with $n$ marked points (for $2g-2+n>0$), we have a commutative diagram
\begin{equation} \label{eqn:stgluepullback}
\begin{tikzcd}
 \M_\Gamma \arrow[r,"\prod_{v} \st_v"] \arrow[d,"\xi_\Gamma"] & \Mbar_\Gamma \arrow[d, "\xi_\Gamma"]\\
 \M_{g,n} \arrow[r, "\st"] &\Mbar_{g,n}
\end{tikzcd}    
\end{equation}
where $\st_v : \M_{g(v),n(v)} \to \Mbar_{g(v),n(v)}$ is the stabilization morphism at vertex $v$.
Moreover, the induced map
\begin{equation} \label{eqn:stfibreprodmap}
    \M_\Gamma \to \M_{g,n} \times_{\Mbar_{g,n}} \Mbar_\Gamma
\end{equation}
is proper and birational.
In particular
\begin{equation} \label{eqn:stdecstratumpullback}
    \st^* \left[\Gamma, \prod_v \alpha_v\right] = (\xi_\Gamma)_* \left( \prod_v {\st_v^*} \alpha_v \right).
\end{equation}
\end{proposition}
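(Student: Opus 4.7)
The plan is to prove the three assertions in order: commutativity of \eqref{eqn:stgluepullback}, properness and birationality of the induced map \eqref{eqn:stfibreprodmap}, and finally the pullback formula \eqref{eqn:stdecstratumpullback} by flat base change.

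\emph{Commutativity.} I would argue directly from the moduli interpretation. An $S$-point of $\M_\Gamma$ is a collection $(C_v,(p_h)_{h\in H(v)})_{v\in V(\Gamma)}$ of prestable curves. Since the markings $p_h$ used for gluing are smooth points and remain smooth and pairwise distinct under stabilization, gluing commutes with componentwise stabilization: the curve obtained by first gluing along the edges of $\Gamma$ and then stabilizing is canonically isomorphic to the one obtained by first stabilizing each $C_v$ separately and then gluing. The stability hypothesis $2g-2+n>0$ (and the corresponding inequalities at each vertex, forced by $\Gamma$ being a stable graph) ensures that the images land in $\Mbar_\Gamma$ and $\Mbar_{g,n}$ respectively. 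By the universal property of the fibre product this gives the map $f\colon \M_\Gamma \to \Sigma := \M_{g,n}\times_{\Mbar_{g,n}}\Mbar_\Gamma$ of \eqref{eqn:stfibreprodmap}.

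\emph{Properness and birationality of $f$.} Since $\xi_\Gamma\colon\Mbar_\Gamma\to\Mbar_{g,n}$ is proper and representable (Lemma \ref{Lem:gluingmaps}), its base change $p_1\colon\Sigma\to\M_{g,n}$ along $\st$ is proper (hence separated), and $p_1\circ f = \xi_\Gamma\colon\M_\Gamma\to\M_{g,n}$ is proper again by Lemma \ref{Lem:gluingmaps}. It follows that $f$ itself is proper. For birationality, I would identify an open dense substack $\Sigma^\circ\subseteq\Sigma$ on which $f$ is an isomorphism, namely the locus of pairs $((C,p_*),(\overline C_v))$ for which the stabilization morphism $C\to\st(C,p_*) = \xi_\Gamma((\overline C_v))$ does not contract any chain of unstable rational components straddling two distinct stable components $\overline C_v,\overline C_{v'}$ belonging to different vertices of $\Gamma$. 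On $\Sigma^\circ$ one reconstructs the tuple $(C_v)\in\M_\Gamma$ uniquely by declaring $C_v$ to be the preimage of $\overline C_v$ under $C\to\st(C)$ together with the trees of unstable rational components attached to it; this inverts $f$. Density of $\Sigma^\circ$ follows from the fact that the preimage of $\Sigma^\circ$ in $\M_\Gamma$ is the open locus where no unstable rational chain sits across any marking $p_h$ used in the gluing, and this open locus is dense in the irreducible stack $\M_\Gamma = \prod_v \M_{g(v),n(v)}$. This is the step I expect to be the main technical obstacle, as care is needed to match irreducible components of $\Sigma$ with those of $\M_\Gamma$ and to rule out extraneous components of $\Sigma$ outside the image of $f$.

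\emph{Derivation of the formula.} Writing $\alpha = \prod_v \alpha_v \in \CH^*(\Mbar_\Gamma)$, the class $[\Gamma,\alpha]$ equals $(\xi_\Gamma)_*(\alpha\cap[\Mbar_\Gamma])$. Since $\st$ is flat (\cite[Proposition 3]{behrendGW}) and $\xi_\Gamma$ is proper and representable, flat base change along the cartesian square defining $\Sigma$ gives
\[
\st^*(\xi_\Gamma)_*(\alpha\cap[\Mbar_\Gamma]) = (p_1)_*\bigl(p_2^*\alpha\cap[\Sigma]\bigr).
\]
The proper birational map $f$ of the previous step satisfies $f_*[\M_\Gamma]=[\Sigma]$, so by the projection formula
\[
p_2^*\alpha\cap[\Sigma] = f_*\bigl(f^*p_2^*\alpha\cap[\M_\Gamma]\bigr) = f_*\Bigl(\bigl(\prod_v \st_v^*\alpha_v\bigr)\cap[\M_\Gamma]\Bigr),
\]
using $p_2\circ f = \prod_v \st_v$. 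Applying $(p_1)_*$ and using $p_1\circ f = \xi_\Gamma$ yields the desired equality $\st^*[\Gamma,\alpha]=(\xi_\Gamma)_*\bigl(\prod_v \st_v^*\alpha_v\bigr)$.
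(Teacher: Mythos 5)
Your proof is correct and follows the paper's own argument essentially step for step: commutativity from the modular description of stabilization, properness of \eqref{eqn:stfibreprodmap} via the cancellation property for proper morphisms applied to $\xi_\Gamma$ and $\mathrm{pr}_1$, birationality over the locus where no rational chain is contracted onto a gluing node, and the cycle formula by flat base change plus the projection formula. Your final computation is precisely the content of Lemma \ref{Lem:OperationalChowcompatible}, which the paper cites instead of rederiving inline; the only point worth flagging is that your projection-formula step for the proper, non-flat map $f$ implicitly treats $p_2^*\alpha$ as an operational class, which is legitimate here because $\Mbar_\Gamma$ is smooth and stratified by quotient stacks.
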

\begin{proof}
The commutativity of the diagram \eqref{eqn:stgluepullback} follows from the definition of the stabilization. The map \eqref{eqn:stfibreprodmap} is easily seen to be birational and its properness follows from the diagram
\begin{equation*}
\begin{tikzcd}
 \M_\Gamma \arrow[rr] \arrow[rd,"\xi_\Gamma", swap] && \M_{g,n} \times_{\Mbar_{g,n}} \Mbar_\Gamma\arrow[dl, "\mathrm{pr}_1"]\\
 & \M_{g,n} &
\end{tikzcd}
\end{equation*}
and the cancellation property of proper morphisms (in the diagram, the map $\xi_\Gamma$ and $\mathrm{pr}_1$ are proper). Equation \eqref{eqn:stdecstratumpullback} again follows by an application of Lemma \ref{Lem:OperationalChowcompatible}.
\end{proof}
The proposition above reduces the pullback of tautological classes under $\st$ to computing the pullback of $\kappa$ and $\psi$-classes.

\begin{proposition} \label{Pro:stpsipullback}
Let $g,n$ with $2g-2+n>0$ and let $\A=\{\zero, \one\}$. Then for $1 \leq i \leq n$ and the stabilization map $\st_\A: \M_{g,n,\one} \to \Mbar_{g,n}$ we have
\begin{equation} \label{eqn:stpsipullback}
    \st_\A^* \psi_i = \psi_i -  
    \begin{tikzpicture}[scale=0.7, baseline=-3pt,label distance=0.3cm,thick,
    virtnode/.style={circle,draw,scale=0.5}, 
    nonvirt node/.style={circle,draw,fill=black,scale=0.5} ]
    % \node [virtnode] (A) {} node[below]{$A_1$}; works
    \node [nonvirt node,label=below:$(0{,}\one)$] (A) {};
    \node at (2,0) [nonvirt node,label=below:$(g{,}\one)$] (B) {};
    \draw [-] (A) to (B);
    \node at (-.7,.5) (n1) {$i$};
    % \node at (-.86,0) (n0) {$I_1$};
    % \node at (-.7,-.5) (n2) {$n+1$};
    \draw [-] (A) to (n1);
    % \draw [-] (A) to (n2);
    \node at (2.7,.5) (m1) {};
    % \node at (3.86,0) (m0) {$\{1, \ldots, n\} \setminus \{i\}$};
    \node at (2.7,-.5) (m2) {};
    \draw [-] (B) to (m1);
    \draw [-] (B) to (m2);    
    \end{tikzpicture}
    + 
    \begin{tikzpicture}[scale=0.7, baseline=-3pt,label distance=0.3cm,thick,
    virtnode/.style={circle,draw,scale=0.5}, 
    nonvirt node/.style={circle,draw,fill=black,scale=0.5} ]
    % \node [virtnode] (A) {} node[below]{$A_1$}; works
    \node [nonvirt node,label=below:$(0{,}\one)$] (A) {};
    \node at (2,0) [virtnode,label=below:$(g{,}\zero)$] (B) {};
    \draw [-] (A) to (B);
    \node at (-.7,.5) (n1) {$i$};
    % \node at (-.86,0) (n0) {$I_1$};
    % \node at (-.7,-.5) (n2) {$n+1$};
    \draw [-] (A) to (n1);
    % \draw [-] (A) to (n2);
    \node at (2.7,.5) (m1) {};
    % \node at (3.86,0) (m0) {$\{1, \ldots, n\} \setminus \{i\}$};
    \node at (2.7,-.5) (m2) {};
    \draw [-] (B) to (m1);
    \draw [-] (B) to (m2);    
    \end{tikzpicture} \,.
\end{equation}
\end{proposition}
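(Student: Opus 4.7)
The strategy is to compare the relative dualizing sheaves on the two natural universal curves over $\M_{g,n,\one}$ and pull back along the $i$-th section. Let $\pi_\A\colon C_\A \to \M_{g,n,\one}$ be the universal curve, realized via Corollary~\ref{Cor:modinterpretunivcurve} as an open substack of $\M_{g,n+1,\one}$, and form the base change
\[
C_{\mathrm{st}} := \Mbar_{g,n+1} \times_{\Mbar_{g,n}} \M_{g,n,\one} \xrightarrow{\ \tilde\pi\ } \M_{g,n,\one}
\]
of the universal stable curve. By the universal property of stabilization there is a canonical contraction $q\colon C_\A \to C_{\mathrm{st}}$ over $\M_{g,n,\one}$ which on each fibre contracts precisely the unstable $\PP^1$-bubbles; by the definition of $\M_{g,n,\one}$ every such bubble necessarily has value $\one$. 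The $i$-th sections are compatible, $\tilde\sigma_i = q\circ \sigma_i^\A$, so base-change invariance of the relative dualizing sheaf gives
\[
\st_\A^*\psi_i \;=\; \tilde\sigma_i^*\omega_{\tilde\pi} \;=\; (\sigma_i^\A)^* q^* \omega_{\tilde\pi}.
\]

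The central comparison is then between $q^*\omega_{\tilde\pi}$ and $\omega_{\pi_\A}$. Locally in each fibre, $q$ is a sequence of blowdowns of $(-1)$-curves in a smooth surface, so the adjunction formula produces an identification $\omega_{\pi_\A} \cong q^*\omega_{\tilde\pi}(E)$, where $E \subset C_\A$ is the total exceptional divisor, supported on the union of the contracted value-$\one$ bubbles. Since $\psi_i = (\sigma_i^\A)^*\omega_{\pi_\A}$, substituting yields
\[
\st_\A^*\psi_i \;=\; \psi_i \;-\; (\sigma_i^\A)^*\CO(E).
\]
One then identifies $(\sigma_i^\A)^*\CO(E)$ as a linear combination of decorated stratum classes. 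The section $\sigma_i^\A$ meets $E$ precisely over the loci in $\M_{g,n,\one}$ where $p_i$ sits on an unstable bubble, and classifying such loci in codimension one via $\A$-valued prestable graphs isolates exactly the two graphs in the statement, which share the same underlying prestable graph but differ in the $\A$-value at the genus $g$ vertex.

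The hard part is pinning down the multiplicity with which each of the two candidate strata appears. The graphs are geometrically very close---they have the same image under $F_\A\colon \M_{g,n,\one} \to \M_{g,n}$---but the local structure of the universal curve near them differs because of the stability constraint forced by the value-$\zero$ vertex in the second graph, and this asymmetry is what produces the different signs in \eqref{eqn:stpsipullback}. A clean route to nail down the coefficients is to combine the factorization $\st_\A = \st\circ F_\A$ with Proposition~\ref{Pro:forgetfulAvaluespullback} and the classical pullback formula on $\M_{g,n}$, and then to verify the result on explicit one-parameter smoothings transverse to each stratum, along the lines of the argument \eqref{eqn:psiboundarysquarepushforward}--\eqref{eqn:forgetforgetpullback} used in the proof of Proposition~\ref{Pro:psikappaforgetpullback}, expressing $\psi$-classes via self-intersections of sections.
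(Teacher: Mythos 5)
Your route --- comparing $\omega_{\pi_\A}$ with $q^*\omega_{\tilde\pi}$ via the contraction $q$ and then restricting $\CO(E)$ to the section --- is genuinely different from the paper's proof, which instead writes $\psi_i=-\pi_*(D_{i,n+1}^2)$, identifies the pullback of $D_{i,n+1}$ to $\M_{g,n+1,\one}$ as the sum of the three divisors carrying the $\A$-valuations $(\zero,\one)$, $(\one,\zero)$, $(\one,\one)$ of the same underlying graph, and extracts the coefficients by expanding the square of that sum with the intersection/pushforward calculus of Section~\ref{calculus}. In your setup the entire content of the proposition is concentrated in the one step you defer, namely the computation of $(\sigma_i^\A)^*\CO(E)$, and that step as you have framed it cannot produce the stated sign pattern.

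Here is the gap, concretely. The two boundary terms in \eqref{eqn:stpsipullback} have the same underlying prestable graph and differ only in the $\A$-value at the genus-$g$ vertex; they are two distinct sheets of the \emph{\'etale} map $F_\A\colon\M_{g,n,\one}\to\M_{g,n}$ over the single divisor $D\subset\M_{g,n}$ where $p_i$ lies on a two-pointed bubble. \'Etale-locally at the generic point of either divisor, the moduli stack, its universal curve, the section $\sigma_i^\A$ and the contraction $q$ are all pulled back from the identical picture on $\M_{g,n}$: there is no ``asymmetry in the local structure of the universal curve'' between the two, and the stability constraint on the value-$\zero$ vertex is vacuous there (that vertex is stable in both graphs). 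So your adjunction argument yields $(\sigma_i^\A)^*\CO(E)$ with the \emph{same} coefficient $+1$ on both divisors, and your proposed fallback gives the same conclusion: combining $\st_\A=\st\circ F_\A$, the classical comparison $\st^*\psi_i=\psi_i-D$ on $\M_{g,n}$, and Proposition~\ref{Pro:forgetfulAvaluespullback} (which pulls $D$ back to the sum of the two valued divisors, each with multiplicity one, since $F_\A$ is \'etale) produces equal signs on the two boundary terms, as does a one-parameter smoothing transverse to the generic point of the $(g,\zero)$ divisor. You therefore cannot obtain the opposite signs of \eqref{eqn:stpsipullback} from your framework by any amount of care with multiplicities; asserting an unspecified ``asymmetry'' to force agreement with the displayed formula is precisely the missing proof. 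Before the argument can be completed you must confront this head-on: either locate a genuine geometric source for the $+$ sign on the $(g,\zero)$ term --- which would have to be reconciled with the \'etale-local identification and with the factorization through $F_\A$ --- or accept that your computation yields a definite answer with equal signs and address the resulting discrepancy with the statement explicitly. (A secondary, repairable point: the claim that $q$ is fibrewise a composition of blowdowns of $(-1)$-curves in a smooth surface fails for contracted semistable bridges and for non-transverse smoothings; it is harmless here only because the codimension-one analysis along $\sigma_i$ involves a single rational tail.)
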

\begin{proof}
Consider the commutative diagram
\begin{equation} \label{eqn:stforgetdiagr}
\begin{tikzcd}
 \M_{g,n+1,\one} \arrow[r,"c"] \arrow[rd,"\pi_\A"] & \M_{g,n,\one} \times_{\Mbar_{g,n}} \Mbar_{g,n+1} \arrow[d] \arrow[r,"\widehat{\st}_\A"] & \Mbar_{g,n+1}\arrow[d,"\pi"]\\
 & \M_{g,n,\one} \arrow[r,"\st_\A"]&\Mbar_{g,n}
\end{tikzcd}
\end{equation}
where the right square is cartesian and the map $c$ is the map  contracting the unstable components of the universal curve $\M_{g,n+1,\one} \to \M_{g,n,\one}$. By the cancellation property of proper morphisms, the map $c$ is proper and easily seen to be birational. 

For computing the pullback of $\psi_i$ under $\st_\A$, we use that $\psi_i = -\pi_*( D_{i,n+1}^2)$ on $\Mbar_{g,n}$, where $D_{i,n+1} \subset \Mbar_{g,n+1}$ is the image of the $i$-th section. By Lemma \ref{Lem:OperationalChowcompatible}, we have
\[\st_\A^* \psi_i = - \st_\A^* \pi_*( D_{i,n+1}^2) = (\pi_\A)_* \left((c \circ \widehat{\st}_\A)^* D_{i,n+1} \right)^2.\]
The composition $c \circ \widehat{\st}_\A$ is just the usual stabilization map and the pullback of $D_{i,n+1}$ under this map is the sum of three boundary divisors of $\M_{g,n+1,\one}$ : their underlying graph is the same as for $D_{i,n+1}$ and the $\A$-values correspond to the three different ways $\zero + \one = \one + \zero = \one + \one$ to distribute the value $\one$ to the two vertices. A short computation using the rules for intersection and pushforward presented earlier gives the formula (\ref{eqn:stpsipullback}).  
% \jocomment{I put a scan of the computation at \url{https://drive.google.com/open?id=120xvZ68lwzkKePElVK4912Ku1fYniEyd}. But maybe you can do it yourself as a double check?}\Ycomment{Thanks! I checked your computation.}
% Thanks!
\end{proof}

The formula for the pullback of $\kappa$-classes is more involved and we need to introduce a bit of notation to state it. Fix $g,n$ with $2g-2+n>0$, then for $k \geq 0$ let $\widehat G_k, G_k$ be the following $(n+1)$ and $n$-pointed prestable graphs in genus $g$ with $k$ edges
\begin{align*} %\label{eqn:graphGk}
\widehat G_k &=& \begin{tikzpicture}[baseline=-3pt,label distance=0.2cm,thick,
    virtnode/.style={circle,draw,scale=0.5}, 
    nonvirt node/.style={circle,draw,fill=black,scale=0.5}, ampersand replacement=\& ]
    % \node [virtnode] (A) {} node[below]{$A_1$}; works
    \node [nonvirt node,label=below:$0$,label=above:$v_0$] (A) {};
    \node at (2,0) [nonvirt node,label=below:$0$] (B) {};
    \node at (4,0) [nonvirt node,label=below:$g$] (C) {};
    \draw [-] (A) to node[above, near start]{$h_0$} (B);
    \draw [-] (B) to (2.5,0) node[right]{$\cdots$};
    \draw [-] (3.5,0) to (C);
    %\node at (-.7,.5) (n1) {$i$};
    \node at (-1.36,0) (n0) {$n+1$};
    %\node at (-.7,-.5) (n2) {$n+1$};
    \draw [-] (A) to (n0);
    %\draw [-] (A) to (n2);
    \node at (4.7,.5) (m1) {};
    \node at (5.9,0) (m0) {$\{1, \ldots, n\}$};
    \node at (4.7,-.5) (m2) {};
    \draw [-] (C) to (m1);
    \draw [-] (C) to (m2);   
    \draw [-] (C) to (m0);   
    \end{tikzpicture}\,,\\
    G_k &=&\begin{tikzpicture}[baseline=-3pt,label distance=0.2cm,thick,
    virtnode/.style={circle,draw,scale=0.5}, 
    nonvirt node/.style={circle,draw,fill=black,scale=0.5}, ampersand replacement=\& ]
    % \node [virtnode] (A) {} node[below]{$A_1$}; works
    \node [nonvirt node,label=below:$0$,label=above:$v_0$] (A) {};
    \node at (2,0) [nonvirt node,label=below:$0$] (B) {};
    \node at (4,0) [nonvirt node,label=below:$g$] (C) {};
    \draw [-] (A) to node[above, near start]{$h_0$} (B);
    \draw [-] (B) to (2.5,0) node[right]{$\cdots$};
    \draw [-] (3.5,0) to (C);
    %\node at (-.7,.5) (n1) {$i$};
    % \node at (-1.36,0) (n0) {$n+1$};
    %\node at (-.7,-.5) (n2) {$n+1$};
    % \draw [-] (A) to (n0);
    %\draw [-] (A) to (n2);
    \node at (4.7,.5) (m1) {};
    \node at (5.9,0) (m0) {$\{1, \ldots, n\}$};
    \node at (4.7,-.5) (m2) {};
    \draw [-] (C) to (m1);
    \draw [-] (C) to (m2);   
    \draw [-] (C) to (m0);   
    \end{tikzpicture}\,.
\end{align*}
Here $v_0$ is the leftmost vertex and, for $k \geq 1$, $h_0$ is the unique half-edge incident to this vertex. For $k=0$, the graphs $\widehat{G}_k, G_k$ are the trivial graphs, respectively.
% Let $\mathcal{G} = \{G_k: k \geq 0\}$ and $\mathcal{G}^* = \mathcal{G} \setminus \{G_0\}$.

Also, in the proposition below we consider the power series 
\[\Phi(t)=\frac{\exp(t)-1}{t} = 1 + \frac{t}{2} + \frac{t^2}{6}+ \ldots .\]
We use the notation $[\Phi(t)]_{t^a \mapsto \kappa_a}$ to indicate that in the power series $\Phi$ the term $t^a$ is substituted with the class $\kappa_a$, getting the mixed-degree Chow class
\[[\Phi(t)]_{t^a \mapsto \kappa_a}= 1 + \frac{\kappa_1}{2} + \frac{\kappa_2}{6}+ \ldots .\]
\begin{proposition} \label{Pro:stkappapullback}
For $g,n$ with $2g-2+n>0$
% , then for the power series
% \[\Phi(t)=\frac{\exp(t)-1}{t} = 1 + \frac{t}{2} + \frac{t^2}{6}+ \ldots\] 
the stabilization morphism $\st: \M_{g,n} \to \Mbar_{g,n}$ satisfies the following equality of mixed-degree Chow classes on $\M_{g,n}$:
\begin{align} 
&\st^*\left[\Phi(t)\right]_{t^a \mapsto \kappa_a} \nonumber \\=& \left[\Phi(t)\right]_{t^a \mapsto \kappa_a} + \sum_{k \geq 1} (\xi_{G_k})_* \left( \left(\left[\Phi(t) \right]_{t^a \mapsto \kappa_{v_0,a}} + \psi_{h_0}^{-1} \right) \cdot \mathrm{Cont}_{E(G_k)} \right). \label{eqn:stkappapullback}
\end{align}
Here $\mathrm{Cont}_{E(G_k)}$ is the mixed-degree class
\[\mathrm{Cont}_{E(G_k)} =  \prod_{(h,h') \in E(G_k)} - \Phi(\psi_h + \psi_{h'}) \] %\frac{\exp(-(\psi_h + \psi_{h'}))-1}{\psi_h + \psi_{h'}}\]
on $\M_{G_k}$. In the formula above, the term $\psi_{h_0}^{-1}$ is understood to vanish unless it pairs with a term of $\mathrm{Cont}_{E(G_k)}$ containing a positive power of $\psi_{h_0}$ and we have $\kappa_{v_0,0}=2\cdot 0 -2+1=-1$.
\end{proposition}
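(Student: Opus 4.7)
The plan is to express $[\Phi(t)]_{t^a \mapsto \kappa_a}$ as a pushforward from the universal curve $\pi \colon \C_{g,n} \to \M_{g,n}$, and then reduce the computation of $\st^*$ to the pullback of $\psi_{n+1}$, which is governed by Proposition~\ref{Pro:stpsipullback}. Consider the commutative square
\[
\begin{tikzcd}
\C_{g,n} \arrow[r, "\st^+"] \arrow[d, "\pi"] & \Mbar_{g,n+1} \arrow[d, "\pi"] \\
\M_{g,n} \arrow[r, "\st"] & \Mbar_{g,n},
\end{tikzcd}
\]
where $\st^+$ stabilizes the universal curve as an $(n{+}1)$-pointed family. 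Arguing as in Proposition~\ref{Pro:stpsipullback}, the induced map $\C_{g,n} \to \M_{g,n} \times_{\Mbar_{g,n}} \Mbar_{g,n+1}$ is proper and birational, so Lemma~\ref{Lem:OperationalChowcompatible} yields $\st^* \pi_* = \pi_* (\st^+)^*$. Combined with the formal identity $[\Phi(t)]_{t^a \mapsto \kappa_a} = \pi_*(\exp\psi_{n+1} - 1)$ (which follows from $\Phi(x) = \sum_{a\geq 0} x^a/(a{+}1)!$ and $\kappa_a = \pi_*\psi_{n+1}^{a+1}$), the problem reduces to computing $\pi_*\bigl(\exp((\st^+)^*\psi_{n+1}) - 1\bigr)$ on $\M_{g,n}$.

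\textbf{Pullback of $\psi_{n+1}$ and self-intersection.} Apply Proposition~\ref{Pro:stpsipullback} with $n$ replaced by $n+1$ and $i = n+1$. The divisor $D^{(2)}$ of that formula (the bubble carrying $n+1$ adjacent to a $\zero$-valued vertex) is contained in $\pi^{-1}(\mathfrak Z_{g,n})$ by Corollary~\ref{Cor:modinterpretunivcurve}, so it vanishes on $\C_{g,n}$. Hence $(\st^+)^*\psi_{n+1} = \psi_{n+1} - \delta$, where $\delta = (\xi_{\Gamma_1})_*1$ for $\Gamma_1$ the one-edge graph with a $(0,\one)$-bubble $v_1$ carrying leg $n+1$ joined to the $(g,\one)$-vertex carrying $1,\dots,n$. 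By Corollary~\ref{Cor:tautproduct}, $\delta^j = (\xi_{\Gamma_1})_*\bigl((-\psi_{h_0} - \psi_{h_0'})^{j-1}\bigr)$ for $j \geq 1$; summing the exponential series yields the clean identity
\[
 \exp(\psi_{n+1} - \delta) - 1 = (\exp\psi_{n+1} - 1) - (\xi_{\Gamma_1})_*\bigl(\exp(\psi_A)\cdot \Phi(\psi_{h_0} + \psi_{h_0'})\bigr),
\]
where $\psi_A$ denotes the $\psi$-class at leg $n+1$ on the bubble $v_1$. Pushing forward the first summand by $\pi$ recovers the leading term $[\Phi(t)]_{t^a \mapsto \kappa_a}$ of the stated formula.

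\textbf{Iteration along the chain and main obstacle.} For the remaining summand, Proposition~\ref{Pro:glueforgetpullback} factors $\pi \circ \xi_{\Gamma_1} = \xi_{G_1} \circ \widetilde\pi_1$, where $\widetilde\pi_1$ forgets leg $n+1$ on the bubble $v_1$. By Proposition~\ref{Pro:psikappaforgetpullback}, $\psi_{h_0}|_{\M_{0,2}} = \widetilde\pi_1^*\psi_{h_0} + D_{h_0,n+1}$; expanding $\Phi(\psi_{h_0} + \psi_{h_0'})$ around $\widetilde\pi_1^*(\psi_{h_0} + \psi_{h_0'})$, the successive self-intersections of the section divisor $D_{h_0,n+1}$ attach one further $(0,\one)$-bubble at the outer end of the chain, producing for each $k \geq 1$ a contribution supported on $G_k$ weighted by $\mathrm{Cont}_{E(G_k)}$. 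The accumulated decoration at the outermost vertex $v_0$ is obtained by applying $\widetilde\pi_{1*}$ to expressions of the form $\exp(\psi_A)\cdot\psi_{h_0}^m$: nonnegative $m$ yields the $\kappa_{v_0,a}$-classes encoded by $[\Phi(t)]_{t^a \mapsto \kappa_{v_0,a}}$ (with $\kappa_{v_0,0} = -1$), while the pushforward of $\exp(\psi_A)$ itself (without $\psi_{h_0}$ factors) is recorded by the formal symbol $\psi_{h_0}^{-1}$ which, by convention, pairs only with positive powers of $\psi_{h_0}$ coming from $\mathrm{Cont}_{E(G_k)}$. The key technical challenge is to verify by induction on $k$ that this iteration produces exactly the chain graphs $G_k$ with the correct contour $\mathrm{Cont}_{E(G_k)} = \prod_e -\Phi(\psi_h + \psi_{h'})$, and that the forgetful pushforward faithfully matches the $[\Phi(t)]_{t^a \mapsto \kappa_{v_0,a}} + \psi_{h_0}^{-1}$ expression at the outermost bubble; this is driven by the formal identity $\Phi(x) = (e^x - 1)/x$.
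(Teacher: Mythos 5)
Your overall strategy---reducing $\st^*$ of the $\kappa$-generating series to the pullback of $\psi_{n+1}$ on the universal curve via the proper birational comparison and Lemma \ref{Lem:OperationalChowcompatible}, and then pushing forward along $\pi$---is exactly the route the paper takes, and your first two steps (including the observation that the $(g,\zero)$-term of Proposition \ref{Pro:stpsipullback} vanishes on $\C_{g,n}$) are fine. The gap is the formula $\delta^j = (\xi_{\Gamma_1})_*\bigl((-\psi_{h_0}-\psi_{h_0'})^{j-1}\bigr)$ for $j \geq 2$. By Corollary \ref{Cor:tautproduct}, the self-intersection $\delta^2 = [\widehat{G}_1]\cdot[\widehat{G}_1]$ is a sum over \emph{all} generic $(\widehat{G}_1,\widehat{G}_1)$-structures, not only the excess one: the two-edge chain $\widehat{G}_2$ carries such a structure (one copy of $\widehat{G}_1$ is recovered by contracting the inner edge, the other by contracting the outer edge, and all four half-edges of $\widehat{G}_2$ are accounted for), and since the two edges are distinct the excess bundle is trivial, giving a transverse contribution $(\xi_{\widehat{G}_2})_*(1)$. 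Geometrically, the divisor $[\widehat{G}_1]$ has two branches crossing transversally along the locus of curves with a length-two chain of bubbles ending in $p_{n+1}$. More generally $\delta^j$ has contributions on every chain $\widehat{G}_k$ with $2 \le k \le j$, and these are precisely the source of the $k\geq 2$ summands in the statement; the paper packages them into the identity $\exp(-[\widehat{G}_1]) = \sum_{k}(\xi_{\widehat{G}_k})_*\prod_{e}\bigl(-\Phi(\psi_h+\psi_{h'})\bigr)$. With your truncated $\delta^j$, your intermediate identity is supported only on the trivial graph and on $\Gamma_1$, so after pushing forward you can only ever land on the trivial graph and on $G_1$.

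The mechanism you then propose to recover the longer chains cannot supply them. The divisor $D_{h_0,n+1}$ in $\psi_{h_0} = \widetilde\pi_1^*\psi_{h_0} + D_{h_0,n+1}$ on the bubble factor is the image of the \emph{section} $\sigma_{h_0}$ of $\widetilde\pi_1$, i.e.\ a closed embedding; its powers are $D_{h_0,n+1}^j = (\sigma_{h_0})_*\bigl((-\psi)^{j-1}\bigr)$ with no transverse terms, so they remain supported on that same divisor, whose generic member has a contracted $(0,\zero)$-bubble carrying $\{h_0,n+1\}$ rather than a chain of $(0,\one)$-vertices. Everything in this step therefore stays over $G_1$ after the forgetful pushforward, and the induction on $k$ you flag as the ``key technical challenge'' has nothing to induct on. To repair the argument, compute $\exp(-[\widehat{G}_1])$ in full using Corollary \ref{Cor:tautproduct} (this is where $\mathrm{Cont}_{E(G_k)}$ and the sum over $k$ arise), and only afterwards apply the single pushforward $(\pi_{0,1})_*\psi_1^a\psi_2^b = \psi_1^a\kappa_{b-1} + \delta_{b,0}\psi_1^{a-1}$ at the outermost bubble, which is what produces the factor $[\Phi(t)]_{t^a\mapsto\kappa_{v_0,a}} + \psi_{h_0}^{-1}$.
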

To obtain the pullback of an individual class $\kappa_a$ under $\st$ we take the degree $a$ part of \eqref{eqn:stkappapullback} and obtain a formula of the form
\[\st^* \kappa_a = \kappa_a + \text{ boundary corrections}.\]
As an example, for $a=1,2$ we obtain
\begin{equation*}
    \st^*\kappa_1=\kappa_1+[G_1]  
\end{equation*}
and
\begin{equation*}
    \st^*\kappa_2=\kappa_2 -3[G_1,\kappa_{v_0,1}]+2[G_1,\psi_{h_0}]+[G_1,\psi_{h_1}] -3[G_2]
\end{equation*}
where $e=(h_0,h_1)$ is the unique edge of the graph $G_1$.
\begin{proof}[Proof of Proposition \ref{Pro:stkappapullback}]
Consider the following commutative diagram
\tikzset{
  symbol/.style={
    draw=none,
    every to/.append style={
      edge node={node [sloped, allow upside down, auto=false]{$#1$}}}
  }
}
\begin{equation} \label{eqn:stforgetdiagr2}
\begin{tikzcd}
 \M_{g,n+1,\one} \arrow[r,symbol=\supset] \arrow[d] & \C_{g,n} \arrow[d,"\pi'"] \arrow[r,"\widehat{\st}"] & \Mbar_{g,n+1}\arrow[d,"\pi"]\\
 \M_{g,n,\one} \arrow[r,symbol=\supset] & \M_{g,n} \arrow[r,"\st"]&\Mbar_{g,n}\,.
\end{tikzcd}
\end{equation}
Then as $\C_{g,n}$ maps proper and birationally to the fibre product in the right diagram, we have
\begin{equation} \label{eqn:kappacomp}
\st^* \kappa_a = \st^* \pi_* \psi_{n+1}^{a+1} =  \pi'_* \widehat{\st}^* \psi_{n+1}^{a+1} = \pi'_* \left(\psi_{n+1} - [\widehat{G}_1] \right)^{a+1}.
\end{equation}
Here we use that computations in $\C_{g,n}$ can be performed in $\M_{g,n+1,\one}$ together with the pullback formula from Proposition \ref{Pro:stpsipullback} (noting that the third term in (\ref{eqn:stpsipullback}) vanishes since it lies in the complement of the open substack $\C_{g,n} \subset \M_{g,n+1,\one}).$

From now on it will be more convenient working with mixed-degree classes and exponentials. In this language, equation (\ref{eqn:kappacomp}) translates to
\begin{equation} \label{eqn:kappacomp2}
    \st^*[\Phi(t)]_{t^a \mapsto \kappa_a} = \pi'_* \exp(\psi_{n+1} - [\widehat{G}_1]) = \pi'_* \left(\exp(\psi_{n+1}) \cdot \exp(- [\widehat{G}_1]) \right).
\end{equation}
The occurrence of the power series $\Phi$ is due to the discrepancy between the degree $a$ of $\kappa_a$ on the left of (\ref{eqn:kappacomp}) and the degree $a+1$ of the term on the right. Using the rules for intersections of tautological classes, one shows
\begin{equation}
    \exp(- [\widehat{G}_1]) = \sum_{k\geq 1} (\xi_{\widehat{G}_k})_* \left(  \prod_{(h,h') \in E(\widehat{G}_k)} - \Phi(\psi_h + \psi_{h'}) \right).
\end{equation}
Now in the pushforward (\ref{eqn:kappacomp2}) the only terms supported on the trivial graph are those from \[\pi'_*(\exp(\psi_{n+1}))=[\Phi(t)]_{t^a \mapsto \kappa_a}\,,\]
explaining the first term of the answer. All other terms of the product of the exponentials are supported on some $\widehat{G}_k$ for $k \geq 1$, where marking $n+1$ is on a rational component with just one other half-edge $h_0$. Using the formulas for the pushforward by the forgetful map $\pi'$ from Proposition \ref{Pro:tautforgetpushforward}, the only nontrivial pushforward we have to compute is the one by the universal curve $\pi_{0,1}: \C_{0,1} \to \M_{0,1}$, corresponding to forgetting $n+1$ on the $2$-marked genus $0$ component $v_0$ of $\widehat{G}_k$. Here, a short computation shows
\begin{equation} \label{eqn:kappacomp3}
(\pi_{0,1})_* \psi_1^a \psi_2^b = \psi_1^a \kappa_{b-1} + \delta_{a,0} \psi_1^{a-1}
\end{equation}
where $\delta_{a,0}$ is the Kronecker delta and we have the convention $\kappa_{-1}=\psi_1^{-1}=0$. Applying this formula for the pushforward, the first term in (\ref{eqn:kappacomp3}) gives rise to the term of the result involving $\left[\Phi(t) \right]_{t^a \mapsto \kappa_{v_0,a}}$, where again $\Phi$ appears due to the shift of degree from $b$ to $b-1$ in (\ref{eqn:kappacomp3}). The second term of (\ref{eqn:kappacomp3}) gives rise to the term involving $\psi_{h_0}^{-1}$, where due to the Kronecker delta $\delta_{a,0}$ only the constant term of $\exp(\psi_{n+1})$ survives in the pushforward.
\end{proof}

\begin{remark}
The following is a nontrivial check and application of the computations from the last sections: for $g,n,m$ with $2g-2+n>0$, consider the diagram
\begin{equation} \label{eqn:forgetstcommdiagr}
\begin{tikzcd}
 \Mbar_{g,n+m} \arrow[d,"F_m"] \arrow[dr,"\pi"] & \\
 \M_{g,n} \arrow[r,"\st"] & \Mbar_{g,n}
\end{tikzcd}
\end{equation}
where $F_m$ is the map forgetting the last $m$ markings (without stabilizing the curve), the map $\st$ is the stabilization map and their composition $\pi$ is the ``usual" forgetful map between moduli spaces of stable curves. The pullback of tautological classes under $\pi$ is known classically and the pullback by the two other maps has been computed in the previous sections. Since the pullbacks must be compatible, this gives rise to tautological relations, which we can verify in examples.

For instance, for the class $\kappa_1 \in \CH^1(\Mbar_{g,n})$ we have
\[\pi^* \kappa_1 = \kappa_1 - \sum_{i=n+1}^{n+m} \psi_i + \sum_{\substack{I \subset \{n+1, \ldots, n+m\}\\|I| \geq 2}} D_{0,I}\]
where $D_{0,I} \subset \Mbar_{g,n+m}$ is the boundary divisor of curves with a rational component carrying markings $I$. On the other hand we have
\begin{align*}
\st^* \kappa_1 &= \kappa_1 + [G_1] = \kappa_1 + D_{0,\emptyset}\\
\pi^* \st^* \kappa_1 &= \kappa_1 - \sum_{i=n+1}^{n+m} \psi_i +  \sum_{\substack{I \subset \{n+1, \ldots, n+m\}\\|I| \geq 2}} D_{0,I}\,.%|_{\Mbar_{g,n+m}}\,.
\end{align*}
So indeed we get the same answer. 
\end{remark}

\section{Relation to previous works}\label{sec:previous}
%Insert Example 3.40, 3.43, 3.44 from old paper
In this section we review several results in the literature relating to our study of the intersection theory of the stacks $\M_{g,n}$.
\begin{example}
In \cite[Lemma 1]{gathmann}, Gathmann used the pullback formula of $\psi$-classes along the stabilization morphism $\st\colon \M_{g,1}\to \Mbar_{g,1}$ to prove certain properties of the Gromov--Witten potential. It coincides with our calculation in Proposition \ref{Pro:stpsipullback}.
\end{example}

\begin{example}
In \cite{Pixtonboundary}, Pixton introduces classes $[\Gamma] \in \R^*(\Mbar_{g,n})$  indexed by \emph{prestable} graphs of genus $g$ with $n$ legs. In his construction, chains of unstable vertices encode insertions of $\kappa$ and $\psi$-classes in such a way that the formula for products $[\Gamma] \cdot [\Gamma']$ takes a particularly simple shape. While it is not a priori obvious how to relate his classes to the corresponding boundary strata classes $[\Gamma] \in \R^*(\M_{g,n})$ in the moduli stack of prestable curves, this is a question which we plan to investigate in future work.
\end{example}

\begin{example} \label{Exa:Oesinghaus}
In \cite{oesinghaus}, Oesinghaus computes the Chow ring (with integral coefficients) of a certain open substack $\TT$ of $\M_{0,3}$, defined by the condition that the curve is \emph{semistable} (i.e. every component of the curve has at least two distinguished points) and that the markings $2,3$ are on a stable component of the curve. As a consequence, the prestable graphs of geometric points of $\TT$ are all of the form
\begin{equation*}
\begin{tikzpicture}[scale=1.2, baseline=-3pt,label distance=0.3cm,thick,
    virtnode/.style={circle,draw,scale=0.5}, 
    nonvirt node/.style={circle,draw,fill=black,scale=0.5} ]
    \node at (-6,0) [nonvirt node] (F) {};
    \node at (-4,0) [nonvirt node] (E) {};
    \node at (-3,0) [nonvirt node] (D) {};
    \node at (-1,0) [nonvirt node] (C) {};
    \node [nonvirt node] (A) {};
    \node at (2,0) [nonvirt node] (B) {};
    \draw [-] (A) to (B);
    \draw [-, dotted] (C) to (A);
    \draw [-] (D) to (C);
    \draw [-, dotted] (E) to (D);
    \draw [-] (F) to (E);
    % \node at (-5.7,.3) {$\psi_{h_1}^{a_1}$};
    % \node at (-4.3,.3) {$\psi_{h'_1}^{a'_1}$};
    % \node at (-2.7,.2) {$\scriptstyle h_{2i}$};
    % \node at (-1.3,.2) {$\scriptstyle h'_{2i}$};
    % \node at (.3,.3) {$\psi_{h_{2l-1}}^{a_{2l-1}}$};
    % \node at (1.6,.3) {$\psi_{h'_{2l-1}}^{a'_{2l-1}}$};
    %\node at (-.7,.5) (n1) {$i$};
    %\draw [-] (A) to (n1);
    %\node at (0,.7) (n2) {$\downarrow$};
    \node at (2.7,.5) (m1) {$1$};
    \draw [-] (B) to (m1);
    \node at (-6.7,.5) (n1) {$2$};
    \draw [-] (F) to (n1);
    \node at (-6.7,-.5) (n2) {$3$};
    \draw [-] (F) to (n2);    
    \end{tikzpicture}
\end{equation*}
where we denote by $\Gamma_k$ the graph of the shape above with $k$ edges (for $k \geq 0$). The stack $\TT$ has an atlas given by 
$$\pi_n : \mathcal{A}^n = [\AA^n/\mathbb{G}_m^n] \to \TT\text{ for }n \geq 1.$$
Since $\mathcal{A}^n$ is a vector bundle over $B \mathbb{G}_m^n$, its Chow group\footnote{For the comparison with Oesinghaus' results we formulate everything in terms of $\mathbb{Q}$-coefficients since this is the convention of the present paper.} is given by
\[\CH^*(\mathcal{A}^n) = \mathbb{Q}[\alpha_1, \ldots, \alpha_n],\]
where $\alpha_\ell$ is the class of the $\ell$-th coordinate hyperplane 
\[\iota_\ell : [V(x_i) / \mathbb{G}_m^n] \hookrightarrow \mathcal{A}^n.\]
From a computation in \cite[Lemma 1]{oesinghaus}  it follows that the first Chern class of the normal bundle of $\iota_\ell$ is given by the restriction
\[c_1( \mathcal{N}_{\iota_\ell}) = \iota_\ell^* \alpha_\ell \]
of $\alpha_\ell$ to this hyperplane. Using the charts $\pi_n$, Oesinghaus shows that the Chow ring $\CH^*(\TT)$ is given by the ring QSym of {\em quasi-symmetric functions} on the index set $\mathbb{Z}_{>0}$. QSym can be seen as the subring of $\mathbb{Q}[\alpha_1, \alpha_2, \ldots]$ with additive basis given by
\begin{equation}
    M_{J} = \sum_{i_1 < \ldots < i_k} \alpha_{i_1}^{j_1} \cdots \alpha_{i_k}^{j_k} \text{ for }k \geq 1, J = (j_1, \ldots, j_k) \in \mathbb{Z}_{\geq 1}^{k}.
\end{equation}
Under the isomorphism $\CH^*(\TT) \cong \text{QSym}$, the element $M_J$ is a basis element of degree $\sum_\ell j_\ell$ in the Chow group of $\TT$. The pullback
\[\pi_n^* : \CH^*(\TT) \to \CH^*(\mathcal{A}^n) = \mathbb{Q}[\alpha_1, \ldots, \alpha_n]\]
is induced by the map sending $\alpha_m$ to zero for $m > n$. In particular, it is easy to see that it is injective in Chow-degree at most $n$.

With these preparations in place, we can now identify the generators $M_J$ of $\CH^*(\TT)$ with tautological classes. Indeed, we claim that $M_J$ corresponds to the class supported on the stratum $\M^{\Gamma_k}$ given by
\begin{equation} \label{eqn:oesinghauscomparisonpicture}
\begin{tikzpicture}[scale=1.2, baseline=-3pt,label distance=0.3cm,thick,
    virtnode/.style={circle,draw,scale=0.5}, 
    nonvirt node/.style={circle,draw,fill=black,scale=0.5} ]
    \node at (-6,0) [nonvirt node] (F) {};
    \node at (-4,0) [nonvirt node] (E) {};
    \node at (-3,0) [nonvirt node] (D) {};
    \node at (-1,0) [nonvirt node] (C) {};
    \node [nonvirt node] (A) {};
    \node at (2,0) [nonvirt node] (B) {};
    \draw [-] (A) to (B);
    \draw [-, dotted] (C) to (A);
    \draw [-] (D) to (C);
    \draw [-, dotted] (E) to (D);
    \draw [-] (F) to (E);
    \node at (-5,.3) {$(-\psi - \psi')^{j_1-1}$};
    %\node at (-4.3,.3) {$\psi_{h'_1}^{a'_1}$};
    \node at (-2,.3) {$(-\psi - \psi')^{j_\ell-1}$};
    %\node at (-1.3,.2) {$\scriptstyle h'_{2i}$};
    \node at (1,.3) {$(-\psi - \psi')^{j_k-1}$};
    %\node at (1.6,.3) {$\psi_{h'_{2l-1}}^{a'_{2l-1}}$};
    %\node at (-.7,.5) (n1) {$i$};
    %\draw [-] (A) to (n1);
    %\node at (0,.7) (n2) {$\downarrow$};
    \node at (2.7,.5) (m1) {$1$};
    \draw [-] (B) to (m1);
    \node at (-6.7,.5) (n1) {$2$};
    \draw [-] (F) to (n1);
    \node at (-6.7,-.5) (n2) {$3$};
    \draw [-] (F) to (n2);    
    \end{tikzpicture}
\end{equation}
To see this, we note that from the definition of the charts $\pi_n$ in \cite[Section 3.3]{oesinghaus} one can show that we have a fibre diagram
\begin{equation} \label{eqn:oesinghauscomparison}
\begin{tikzcd}
 \bigsqcup_{1 \leq i_1 < \cdots < i_k \leq n} V(x_{i_1}, \ldots, x_{i_k}) \arrow[r] \arrow[dd] & \mathbb{A}^n \arrow[d]\\
 & \left[\mathbb{A}^n/\mathbb{G}_m^n\right] \arrow[d,"\pi_n"]\\
 \M_{\Gamma_k} \arrow[r,"\xi_{\Gamma_k}"] &\M_{0,3}
\end{tikzcd}
\end{equation}
As a first example, this implies that $[\Gamma_k,1] = (\xi_{\Gamma_k})_* [\M_{\Gamma_k}]$ corresponds to the class
\[\sum_{1 \leq i_1 < \cdots < i_k \leq n} {\alpha_{i_1}\cdots\alpha_{i_k}}\]
on $\mathcal{A}^n$, which indeed is equal to $\pi_n^*(M_{(1, \ldots, 1)})$. For the comparison of more complicated classes, we observe that the decorations $(-\psi - \psi')^{j_\ell-1}$ are the $(j_\ell-1)$-st powers of the Chern class of the normal bundle associated to the $\ell$-th edge of $\Gamma_k$. On the other hand, in the diagram \eqref{eqn:oesinghauscomparison} the function $x_{i_\ell}$ around the linear subspace $V(x_{i_1}, \ldots, x_{i_k})$ is the smoothing parameter for the $\ell$-th node of the curve and the first Chern class of the normal bundle to the locus $V(x_{i_\ell})$ where the $\ell$-th node persists is given by (the restriction of) $\alpha_{i_\ell}$. Since $V(x_{i_1}, \ldots, x_{i_k})$ has class $\alpha_{i_1} \cdots \alpha_{i_k}$ in $\mathcal{A}_n$, we conclude that the element \eqref{eqn:oesinghauscomparisonpicture} of $\CH^*(\TT)$ pulls back via $\pi_n$ to
\[\sum_{1 \leq i_1 < \cdots < i_k \leq n} \alpha_{i_1} \cdots \alpha_{i_k} \cdot \alpha_{i_1}^{j_1-1} \cdots \alpha_{i_k}^{j_k-1} = M_J |_{\mathcal{A}_n}\,.\]
This shows the desired correspondence because this holds for all $n$ and $\pi_n^*$ is injective in degree at most $n$.

Using the correspondence, it is straightforward to see that the product formula for expressing $M_J \cdot M_{J'}$ in terms of basis elements $M_{J_i}$ discussed in \cite[Proposition 2]{oesinghaus} follows from the product formula for decorated strata classes discussed in Section \ref{calculus}. 

In \cite{oesinghaus}, Oesinghaus also computes the Chow group of the semistable loci $\M_{0,2}^{ss}$ and $\M_{0,3}^{ss}$ and there are correspondence results to the tautological generators of these spaces closely parallel to the above discussion. We leave the details to the interested reader.
\end{example}

\newpage
\appendix

%\section{(Operational) Chow groups of locally finite type algebraic stacks} \label{sect:appendixChowlft}
\section{Chow groups of locally finite type algebraic stacks} \label{Sect:Chowlocfintype}
The Chow group of a finite type algebraic stack over a field $k$ is defined in \cite{kreschartin}. We extend this notion to an algebraic stack which is not {necessarily} of finite type over $k$.
\begin{definition} \label{Def:Chowlocallyft}
Let $\M$ be an algebraic stack, locally of finite type over a field $k$. Choose $(\mathcal{U}_i)_{i \in I}$ a directed system\footnote{Recall that this means that for all $\mathcal U_i, \mathcal U_j$ there exists a $\mathcal U_\ell$ containing both of them.} of finite type open substacks of $\M$ whose union is all of $\M$. Then we define
\begin{equation*}
    \CH_*(\M) = \varprojlim_{i \in I} \CH_*(\mathcal U_i),
\end{equation*}
where for $\mathcal U_i \subseteq \mathcal U_j$ the transition map $\CH_*(\mathcal U_j) \to \CH_*(\mathcal U_i)$ is given by the restriction to $\mathcal U_i$. In other words, we have
\begin{equation*}
    \CH_*(\M) = \{(\alpha_i)_{i \in I} : \alpha_i \in \CH_*(\mathcal U_i), \alpha_j|_{\mathcal U_i} = \alpha_i\text{ for }\mathcal U_i \subseteq \mathcal U_j \}.
\end{equation*}
\end{definition}
For the existence of a system $(\mathcal{U}_i)_{i \in I}$ as above, observe that since $\M$ is locally of finite type, we can simply take the system of \emph{all} finite type substacks $\mathcal U \subset \M$. Moreover, given any two systems $(\mathcal{U}_i)_{i \in I}$, $(\mathcal{U}'_i)_{i \in I'}$, one uses Noetherian induction to show that they mutually dominate each other. By standard arguments, the Chow group $\CH_*(\M)$ is independent of the choice of $(\mathcal{U}_i)_{i \in I}$.

From the definition as a limit one sees that the Chow groups $\CH_*(\M)$ inherit all the usual properties (e.g. flat pullback, projective pushforward, Chern classes of vector bundles and Gysin pullbacks) of the Chow groups from \cite{kreschartin}. Moreover, if $\M$ is smooth and has affine stabilizer groups at geometric points, the intersection products on the groups $\CH_*(\mathcal U_i)$ give rise to an intersection product on $\CH_*(\M)$. In this case, if $\M$ is equidimensional we often use the cohomological degree convention
$$\CH^*(\M) = \CH_{\dim \M -*}(\M)\,.$$
The Chow group of a locally finite type algebraic stack is defined as taking a projective limit. Since taking projective limits is not an exact functor and does not commute with tensor products, some of the properties of Chow groups of finite type algebraic stacks do not (obviously) extend. In the following definition we present two finiteness assumptions on locally finite type stacks, which guarantee that the Chow groups we define continue to have some nice properties (like having an excision sequence).
\begin{definition}
Let $\M$ be an equidimensional algebraic stack, locally finite type over a field $k$.
\begin{enumerate}[label=\alph*)]
    \item We say $\M$ is {\em Lindelöf} if every cover of $\M$ by open substacks has a countable subcover.
    \item We say that $\M$ has a \emph{good filtration by finite type substacks}\footnote{This definition is taken from \cite[Definition 5]{oesinghaus}.} if there exists a collection $(\mathcal{U}_m)_{m \in\mathbb{N}}$ of open substack of finite type on $\M$ which is \emph{increasing} (i.e. $\mathcal{U}_m \subset \mathcal{U}_\ell$ for $m<\ell$) and such that $\textup{dim}(\M \setminus \mathcal{U}_m) < \textup{dim}\, \M -m$.
\end{enumerate}
\end{definition}
\begin{lemma}
A locally finite type algebraic stack $\M$ over $k$ is Lindelöf if and only if it has a countable cover $(\mathcal{U}_i)_{i\in \mathbb{N}}$ by finite type open substacks $\mathcal{U}_i \subseteq \M$. In this case the cover $\mathcal{U}_i$ can be chosen to be increasing. In particular, if $\M$ has a good filtration by finite type substacks it is automatically Lindelöf.
\end{lemma}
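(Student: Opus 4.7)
The plan is to prove the three implications separately, with the only substantive input being that every finite type algebraic stack over $k$ is quasi-compact (it admits a surjective smooth morphism from a finite type scheme, which is quasi-compact).

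First I would prove the forward direction: suppose $\M$ is Lindel\"of. Since $\M$ is locally of finite type over $k$, every geometric point has an open neighborhood of finite type, so the collection of all finite type open substacks of $\M$ is an open cover. The Lindel\"of hypothesis then extracts a countable subcover, giving the desired $(\mathcal{U}_i)_{i\in\mathbb{N}}$.

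Next I would prove the reverse direction. Let $(\mathcal{U}_i)_{i\in\mathbb{N}}$ be a countable cover by finite type open substacks, and let $(\mathcal{V}_j)_{j\in J}$ be an arbitrary open cover of $\M$. For each $i$, the collection $(\mathcal{V}_j\cap \mathcal{U}_i)_{j\in J}$ is an open cover of $\mathcal{U}_i$. Since $\mathcal{U}_i$ is of finite type over $k$ and hence quasi-compact, there is a finite subset $J_i\subset J$ with $\mathcal{U}_i=\bigcup_{j\in J_i}(\mathcal{V}_j\cap \mathcal{U}_i)$. The countable union $J_0=\bigcup_{i\in\mathbb{N}} J_i$ then indexes a countable subcover of $(\mathcal{V}_j)_{j\in J}$, proving $\M$ is Lindel\"of.

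To arrange the cover to be increasing, I would simply replace $(\mathcal{U}_i)_{i\in\mathbb{N}}$ by $\mathcal{U}'_n:=\bigcup_{i\le n}\mathcal{U}_i$; each $\mathcal{U}'_n$ is a finite union of finite type open substacks, hence itself of finite type and open, and the $\mathcal{U}'_n$ are increasing with the same union as the $\mathcal{U}_i$. Finally, the last assertion is immediate: the defining property of a good filtration by finite type substacks already provides a countable (indexed by $\mathbb{N}$) increasing cover of $\M$ by finite type open substacks, so by the equivalence just proved $\M$ is Lindel\"of. No step poses a real obstacle here; the argument is essentially a transcription of the classical topological fact that a space which is a countable union of quasi-compact open subspaces is Lindel\"of, adapted to the setting of algebraic stacks via the quasi-compactness of finite type stacks.
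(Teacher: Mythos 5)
Your proof is correct and follows essentially the same route as the paper: the forward direction extracts a countable subcover from the cover by all finite type open substacks, and the converse reduces to the quasi-compactness of each $\mathcal{U}_i$ (which the paper phrases as "Noetherian induction") to obtain a countable subcover of an arbitrary cover. The explicit construction $\mathcal{U}'_n = \bigcup_{i\le n}\mathcal{U}_i$ for the increasing cover and the observation about good filtrations match what the paper leaves implicit.
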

\begin{proof}
If $\M$ is Lindelöf, its cover by the system of \emph{all} finite type substacks has a countable subcover. Conversely assume that $(\mathcal{U}_i)_{i \in \mathbb{N}}$ is a countable cover of $\M$ by finite type open substacks. Given any open cover $(\mathcal V_j)_{j \in J}$ of $\M$, each single open $\mathcal{U}_i$ is covered by finitely many elements $\mathcal V_{j_{i,\ell}}$ of the second cover via Noetherian induction. Then the system $(\mathcal V_{j_{i,\ell}})_{i,\ell}$ is a countable subcover.
\end{proof}
\begin{example}
\begin{enumerate}[label=\alph*)]
    \item The stacks $\M_{g,n}$ of prestable curves have a good filtration by finite type substacks, given by the loci $\M_{g,n}^{\leq e}$ of curves having at most $e$ nodes.
    \item The universal Picard stack $\mathfrak{Pic}_{g,n}$ over $\M_{g,n}$ parameterizing tuples $$(C,p_1, \ldots, p_n, \mathcal{L})$$ of a prestable marked curve and a line bundle $\mathcal{L}$ on $C$ is Lindelöf, but does not have a good filtration by finite type substacks. 
    
    Indeed, we do get a countable cover $(\mathcal{U}_m)_{m \in \mathbb{N}}$ by finite type substacks, where $\mathcal{U}_m$ is the set of $(C,p_1, \ldots, p_n, \mathcal{L})$ such that $C$ has at most $m$ nodes and such that the absolute value of the degree of $\mathcal{L}$ on any component of $C$ is at most $m$. This cover is increasing, but does not satisfy that $\dim(\mathfrak{Pic}_{g,n} \setminus \mathcal{U}_m)$ goes to $-\infty$.
    
    The fact that no good filtration can exist follows from the observation that $\mathfrak{Pic}_{g,n}$ has infinitely many boundary divisors (corresponding to ways that the degree of $\mathcal{L}$ can split up on the components of a curve with two components) and no finite type stack $\mathcal{U}_1$ can contain all generic points of these divisors.
    
    The paper \cite{BHPSS} studied cycles and relations in the operational Chow ring $\CH^*_\text{op}(\mathfrak{Pic}_{g,n})$, see Section \ref{Sect:OperationalChow} for details. While we do not pursue this direction of study in the current paper, the Picard stack is one of our main motivations for introducing the property of being Lindelöf.
    \item For completeness, let us mention that an example of an irreducible scheme which is not Lindelöf is the \emph{line with uncountably many origins}, obtained from the disjoint union of uncountably many copies of the affine line by identifying them away from the origin. 
\end{enumerate}
\end{example}

% In order to show Chow groups behaves well under Chow-K\"unneth properties, we recall the following definition from \cite{oesinghaus}.

When $\M$ has a good filtration $(\mathcal{U}_m)_{m \in\mathbb{N}}$ by finite type substacks, for fixed $d$ we have
\[\CH^d(\M) = \CH^d(\mathcal{U}_m)\]
for $m>d$. This implies that, as long as we are interested in a fixed codimension, all computations can be carried out on a finite type stack and thus essentially all results for the Chow groups of such stacks carry over (e.g. the excision sequence, including the version extended on the left by one higher Chow group from \cite[Proposition 4.2.1]{kreschartin}).

For stacks which are Lindelöf, we get at least the first three terms of the excision sequence. 
%\Ycomment{In \cite{kreschartin} the proper pushforward is defined for projective morphisms between \textit{finite type} algebraic stacks over $k$. The definition of projective pushfoward naturally extends to a projective morphism between locally finite type algebraic stacks, see\label{Rmk:surproppush}.}
\begin{proposition} \label{Pro:excisionsequenceLindelof}
Let $\M$ be an equidimensional algebraic stack, locally finite type over a field $k$ which is Lindelöf. Let $j: \mathfrak{Z} \to \M$ be a closed substack with complement $i: \mathfrak{V} = \M \setminus \mathfrak{Z} \to \M$. Then there exists a complex
\begin{equation} \label{eqn:excisionLindelof}
    \CH_*(\mathfrak{Z}) \xrightarrow{j_*} \CH_*(\M) \xrightarrow{i^*} \CH_*(\mathfrak{V}) \to 0
\end{equation}
which is exact at $\CH_*(\mathfrak{V})$, i.e. $i^*$ is surjective. If moreover the stack $\mathfrak{V}$ is a {\textit{countable}} (finite or infinite) disjoint union of quotient stacks, the sequence is also exact at $\CH_*(\M)$.
\end{proposition}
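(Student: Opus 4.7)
The plan is to realize $\CH_*(\M)$, $\CH_*(\mathfrak{Z})$ and $\CH_*(\mathfrak{V})$ as inverse limits over a countable cofinal family of finite-type open substacks, and to deduce both statements from Kresch's excision theorems applied level-by-level, combined with Mittag--Leffler arguments.

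Using the Lindel\"of hypothesis, I would fix a countable increasing cover $\mathcal{U}_1 \subset \mathcal{U}_2 \subset \cdots$ of $\M$ by finite-type open substacks and set $\mathcal{Z}_m := \mathcal{U}_m \cap \mathfrak{Z}$, $\mathcal{V}_m := \mathcal{U}_m \cap \mathfrak{V}$, $A_m := \CH_*(\mathcal{Z}_m)$, $B_m := \CH_*(\mathcal{U}_m)$, $C_m := \CH_*(\mathcal{V}_m)$. For each $m$ the right-exact excision sequence of Kresch gives $A_m \xrightarrow{\phi_m} B_m \xrightarrow{\psi_m} C_m \to 0$; applying the same excision to the open inclusions $\mathcal{Z}_m \subset \mathcal{Z}_{m+1}$, $\mathcal{U}_m \subset \mathcal{U}_{m+1}$, $\mathcal{V}_m \subset \mathcal{V}_{m+1}$ shows that every transition map in the three systems is surjective. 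By definition the $\varprojlim$ of these systems yields $\CH_*(\mathfrak{Z})$, $\CH_*(\M)$, $\CH_*(\mathfrak{V})$, and $j_* \circ i^* = 0$ is automatic from $\psi_m \phi_m = 0$ on each level.

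To prove surjectivity of $i^*$, I would construct a compatible lift of a given $(c_m) \in \varprojlim C_m$ by induction on $m$. Assuming $b_m$ has been constructed with $\psi_m(b_m) = c_m$, pick any lift $\tilde b_{m+1} \in B_{m+1}$ of $c_{m+1}$; the discrepancy $b_m - \tilde b_{m+1}|_{\mathcal{U}_m}$ lies in $\ker \psi_m = \mathrm{im}\, \phi_m$ by right-exactness, and so equals $\phi_m(a_m)$ for some $a_m \in A_m$. Surjectivity of $A_{m+1} \to A_m$ lets me lift $a_m$ to $\tilde a_{m+1}$, and then $b_{m+1} := \tilde b_{m+1} + \phi_{m+1}(\tilde a_{m+1})$ lifts $c_{m+1}$ and satisfies $b_{m+1}|_{\mathcal{U}_m} = b_m$. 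Note that this part uses only the Lindel\"of hypothesis and the basic right-exact excision, not any quotient-stack assumption.

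For exactness at $\CH_*(\M)$ the extra hypothesis is needed precisely to upgrade each individual Kresch sequence at level $m$ to one that is exact at $B_m$. Writing $\mathfrak{V} = \coprod_n \mathfrak{V}_n$ with each $\mathfrak{V}_n$ a quotient stack, the finite-type stack $\mathcal{V}_m$ meets only finitely many $\mathfrak{V}_n$, and each intersection is an open substack of a quotient stack, hence itself a quotient stack; combining the group actions shows that a finite disjoint union of quotient stacks is again a quotient stack, so Kresch's Proposition~4.2.1 applies and gives $\ker \psi_m = \mathrm{im}\, \phi_m$ together with the extended exact sequence $\CH_*(\mathcal{V}_m,1) \xrightarrow{\partial_m} A_m$. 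From this one first checks that the system $K_m := \ker \psi_m$ has surjective transitions (any $k = \phi_m(a)$ lifts via a lift of $a$), so that $\ker(\varprojlim \psi_m) = \varprojlim K_m$. The remaining task is to show that $\varprojlim A_m \to \varprojlim K_m$ is surjective. I would construct a preimage $(a_m)$ by the same inductive correction as above: having chosen compatible $a_1,\ldots,a_m$, pick any $\tilde a_{m+1} \in A_{m+1}$ with $\phi_{m+1}(\tilde a_{m+1}) = b_{m+1}$; the error $\delta_m := \tilde a_{m+1}|_{\mathcal{Z}_m} - a_m$ now lies in $L_m := \ker \phi_m$, and the inductive step completes provided $\delta_m$ can be written as the restriction of an element of $L_{m+1}$. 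This Mittag--Leffler-type surjectivity on the system $L_m$ is the main obstacle of the proof; I expect to deduce it from the extended Kresch sequence, which presents $L_m$ as the image of the higher Chow group $\CH_*(\mathcal{V}_m,1)$, combined with a careful choice of the filtration $(\mathcal{U}_m)$ adapted to the decomposition $\mathfrak{V} = \coprod_n \mathfrak{V}_n$ so that the lifting problem can be reduced to one involving only the quotient stack pieces, where equivariant intersection theory provides the required extension.
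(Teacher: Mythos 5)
Your argument for the surjectivity of $i^*$ is correct and is essentially the paper's: both reduce to the surjectivity of the restriction maps $\CH_*(\mathfrak{Z}_{m'}) \to \CH_*(\mathfrak{Z}_m)$, you by an explicit inductive correction of lifts, the paper by quoting the abstract Mittag--Leffler criterion for the system $\CH_*(\mathfrak{Z}_m)/\ker((j_m)_*)$. The reduction of the second statement to the vanishing of the $\varprojlim^1$-obstruction for the system $L_m = \ker((j_m)_*)$, presented via Kresch's extended sequence as a quotient of $\CH_*(\mathfrak{V}_m,1)$, is also the right reduction, and your observation that a finite disjoint union of quotient stacks is again a quotient stack (so that the extended sequence applies levelwise) is a legitimate way to get there.

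However, the proof of the second statement has a genuine gap exactly where you flag it: you never establish the Mittag--Leffler property of $(L_m)_m$, and the route you sketch for filling it would not work. There is no reason to expect that an element of $\ker((j_m)_*)$, i.e.\ a class in the image of the connecting map $\partial_m$ on $\CH_*(\mathfrak{V}_m,1)$, can be \emph{extended} to $\mathfrak{V}_{m+1}$ -- restriction maps on first higher Chow groups of open substacks are in general far from surjective, and no amount of equivariant intersection theory on the individual quotient-stack pieces produces such extensions. What actually saves the argument, and what the hypothesis ``$\mathfrak{V}$ is a countable disjoint union of quotient stacks'' is really for, is not extension but \emph{stabilization of images}: since $\mathfrak{V}_m$ is of finite type it meets only finitely many pieces $\mathfrak{V}^1,\ldots,\mathfrak{V}^{e_m}$ of the decomposition, and for $m' \gg m$ the open $\mathfrak{V}_{m'}$ contains all of $\mathfrak{V}^1 \cup \cdots \cup \mathfrak{V}^{e_m}$, so that
\[
L_{m'} \;=\; \bigoplus_{i=1}^{e_m} \CH_*(\mathfrak{V}^i,1)\big/\ker \partial \;\oplus\; \Bigl(\text{a summand supported on the other pieces}\Bigr),
\]
and the second summand restricts to zero on $\mathfrak{V}_m$ because it lives on different components. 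Hence the image of $L_{m'} \to L_m$ stabilizes (to the image of $\bigoplus_i \CH_*(\mathfrak{V}^i,1)$), which is Mittag--Leffler, and $\varprojlim^1 L_m = 0$ follows. Note also that your one-step inductive correction, as written, requires the transition maps $L_{m+1} \to L_m$ to be \emph{surjective}, which is strictly stronger than Mittag--Leffler and is not available; once Mittag--Leffler is established you should instead pass to the eventual images or invoke the six-term $\varprojlim$--$\varprojlim^1$ sequence directly, as the paper does.
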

Note that while the condition on $\mathfrak{V}$ being a countable union of quotient stacks might sound far-fetched, this \emph{is} the situation that we would encounter e.g. by taking $\M$ to be the boundary of the Picard stack $\mathfrak{Pic}_{g,n}$ and taking $\mathfrak{Z} \subset \M$ the closed substack where the curve has at least $2$ nodes.
\begin{proof}[Proof of Proposition \ref{Pro:excisionsequenceLindelof}]
Let $(\mathcal{U}_m)_{m \in \mathbb{N}}$ be an increasing cover of $\M$ by finite type substacks. Denote by $\M_m, \mathfrak{Z}_m, \mathfrak{V}_m$ the intersections of $\M, \mathfrak{Z}, \mathfrak{V}$ with $\mathcal{U}_m$ and by $j_m, i_m$ the restrictions of $j,i$. Then by the usual excision sequence for finite-type stacks we have that
\begin{equation} \label{eqn:exseq111}
    0 \to \CH_*(\mathfrak{Z}_m)/ \ker((j_m)_*) \xrightarrow{(j_m)_*} \CH_*(\M_m) \xrightarrow{i_m^*} \CH_*(\mathfrak{V}_m) \to 0\,.
\end{equation}
are exact sequences. From another application of the excision sequence we see that the restriction maps
\[\CH_*(\mathfrak{Z}_m) \to \CH_*(\mathfrak{Z}_{m'}) \text{ for }m' < m\]
are surjective. This implies that the system $(\CH_*(\mathfrak{Z}_m)/ \ker((j_m)_*))_m$ is Mittag-Leffler (see \cite[\href{https://stacks.math.columbia.edu/tag/0596}{Tag 0596}]{stacks-project}). Then it follows from \cite[\href{https://stacks.math.columbia.edu/tag/0598}{Tag 0598}]{stacks-project} that we obtain an exact sequence
\[
0 \to \varprojlim_m(\CH_*(\mathfrak{Z}_m)/ \ker((j_m)_*)) \to \CH_*(\M) \to \CH_*(\mathfrak{U}) \to 0\,.
\]
This finishes the proof of exactness of \eqref{eqn:excisionLindelof} at $\CH_*(\mathfrak{U})$. 

If $\mathfrak{V}$ is a {countable disjoint} union of quotient stacks, we can modify the exact sequence \eqref{eqn:exseq111}, extending it to the left to obtain
\begin{equation} 
    0 \to \CH_*(\mathfrak{V}_m,1)/ \ker(\partial_m) \xrightarrow{\partial_m} \CH_*(\mathfrak{Z}_m) \xrightarrow{(j_m)_*} \ker(i_m^*) \to 0\,.
\end{equation}
We claim that the directed system
\[(K_m)_m = \left(\CH_*(\mathfrak{V}_m,1)/ \ker(\partial_m) \right)_m\]
is Mittag-Leffler, i.e. that for $m$ fixed, the images of the restriction maps $K_{m'} \to K_m$ stabilize for $m'\gg m$. This follows from two easy observations:
\begin{itemize}
    \item Since by construction $\mathfrak{V}_m = \mathfrak{V} \cap \mathcal{U}_m$ is of finite type, it is supported on a finite number of connected components $\mathfrak{V}^1, \ldots, \mathfrak{V}^{e_m}$ of $\mathfrak{V}$.
    \item Since the stacks $\mathcal{U}_m$ cover $\M$, we know by Noetherian induction that for $m' \gg m$ the stack $\mathfrak{V}_{m'}$ contains the union of $\mathfrak{V}^1, \ldots, \mathfrak{V}^{e_m}$, so that we have
    \[
    K_{m'} = \bigoplus_{i=1}^{e_m} \CH_*(\mathfrak{V}^i, 1) \oplus \CH_*\left(\mathfrak{V}_{m'} \setminus \bigcup_{i=1}^{e_m} \mathfrak{V}^i , 1\right)\,.
    \]
\end{itemize}
Under the map $K_{m'} \to K_m$, the second direct summand always maps to zero (since it is supported on a different connected component). Thus indeed for $m' \gg m$, the image of $K_{m'} \to K_m$ stabilizes to the image of the restriction morphism
\[
\bigoplus_{i=1}^{e_m} \CH_*(\mathfrak{V}^i, 1) \to \CH_*(\mathfrak{V}_m, 1) = K_m\,.
\]
We conclude that the system $(K_m)_m$ is Mittag-Leffler, so again using \cite[\href{https://stacks.math.columbia.edu/tag/0598}{Tag 0598}]{stacks-project} we obtain an exact sequence
\[
0 \to \varprojlim_m(K_m) \to \CH_*(\mathfrak{Z}) \to \varprojlim_m \ker(i_m^*) = \ker(i^*) \to 0\,,
\]
where we use that taking directed inverse limits is left-exact to identify the limit of $\ker(i_m^*)$ as $\ker(i^*)$. Thus we conclude that $\CH_*(\mathfrak{Z})$ surjects onto the kernel of $i^*$, obtaining exactness of \eqref{eqn:excisionLindelof} at $\CH_*(\M)$.
% \jocomment{Problem: I don't see why the map $$\CH_*(\mathfrak{Z}) = \varprojlim_k \CH_*(\mathfrak{Z}_k) \to \varprojlim_k(\CH_*(\mathfrak{Z}_k)/ \ker((j_k)_*))$$
% should be surjective. This would require that $(\ker((j_k)_*))_k$ is Mittag-Leffler, which I currently don't see. Without this, we get a very short excision sequence indeed (just surjectivity of open pullback).}\Ycomment{I see. Unfortunately I also cannot see what kernels satisfies ML properties. Maybe this is the reason why Andrew was afraid of working with non finite stacks? Do you think it is true for particular example, e.g. $\mathfrak{Pic}_{0,n}^{sm}\subset \mathfrak{Pic}_{0,n}$?}
% \jocomment{I think I might see a path to proving this if the stack $\mathfrak{U}$ has the property that it is a countable disjoint union of finite-type quotient stacks, which I think would be the case for any inductive boundary-argument for $\mathfrak{Pic}$. The clue is that you can get $\ker((j_k)_*)$ as the image of the first higher Chow group of $\mathfrak{U}_k$. But $\mathfrak{U}_k$ only sees finitely many connected components of $\mathfrak{U}$ and for $k' \gg k$ the set $\mathfrak{U}_{k'}$ at some point contains all of those components and then any even higher $k''$ doesn't change the image anymore.}\Ycomment{Cool! Maybe we can include this assumption in the proposition?}
\end{proof}

%\jocomment{I added the following in response to the referee:}\Ycomment{Thanks! This comparison is reasonable to me.}
\begin{remark}
In the context of algebraic spaces, a different definition of Chow groups for  locally finite type spaces  is presented in \cite[\href{https://stacks.math.columbia.edu/tag/0EDZ}{Tag 0EDZ}]{stacks-project}. It works directly with formal linear combinations of cycles where locally only finitely many of the coefficients are nonzero. We expect that adapting the definitions of \cite{kreschartin}, one can give a similar definition in the case of algebraic stacks locally of finite type. It seems likely that for stacks which are far away from being finite type (e.g. stacks not being Lindel\"of) these alternative Chow groups have better formal properties than the groups we construct, since e.g. in the setting of algebraic spaces they satisfy an excision sequence without such  assumptions on the ambient space (see \cite[\href{https://stacks.math.columbia.edu/tag/0EP9}{Tag 0EP9}]{stacks-project}). The advantage of Definition \ref{Def:Chowlocallyft} is that it does not require us to reprove the standard constructions and properties of Chow groups since they descend easily to the projective limit. We do expect that the alternative definition of Chow groups modeled on \cite[\href{https://stacks.math.columbia.edu/tag/0EDZ}{Tag 0EDZ}]{stacks-project} coincides with our definition for algebraic stacks locally of finite type over a field which have a good filtration by finite type substacks.
\end{remark}

\section{Proper pushforward of cycles (joint with J. Skowera)}\label{sec:Properpushforward}
It is known that algebraic stacks of finite type over a field which can be stratified by locally closed substacks which are global quotient stacks admit an intersection theory with integral coefficients. The theory includes a pushforward operation for projective morphisms \cite{kreschartin}.  
In the definition of the tautological ring we frequently use pushforwards along forgetful and morphisms. For the moduli stacks of prestable curves, these morphisms are proper, but not projective. A priori this is a problem. Here we show that the pushforward operation for integral coefficients may be extended to proper representable morphisms. If the coefficients are rational, there is also a pushforward for morphisms of relative Deligne-Mumford type.  

\subsection{Definitions and terminologies}
Let a base field $k$ be fixed. {In this section} stacks are algebraic stacks of finite type over $k$. All morphisms are morphisms over $k$. A morphism of stacks $X \to Y$ is projective if it can be factored up to 2-isomorphism through a closed immersion into the projective bundle $\mathbb{P}(\mcE) \to Y$ for a coherent sheaf $\mcE$ of $\mcO_Y$-modules.

We briefly recall the construction of the Chow groups of an algebraic stack $Y$ of finite type over $k$. A representative of a $d$-cycle is a pair $(f, \alpha)$ for a projective morphism $f : X \to Y$ from an algebraic stack $X$ and a naive cycle $\alpha \in \CH^\circ_{d+\rk E}(E)$ on a vector bundle $E \to X$ of constant rank. The Chow groups of $Y$ are
\[
\CH_d(Y) =  \varinjlim_{X \to Y \in \mfA_{Y}} \left[ \widehat{\CH}_d (X) / \widehat{B}_d (X) \right]
\]
where
\begin{eqnarray*}
\widehat{\CH}_d(X) & = & \varinjlim_{E \in \mfB_X} \CH_{d+\rk E}^\circ E \\
\CH_d^\circ(E) & = & Z_d(E) / \delta \Rat_{d}(E) \\
Z_d(E) & = & \textrm{$d$-dimensional cycles} \\
\Rat_{d}(E) & = & \bigoplus_Z k(Z)^{*}, \textrm{$Z \subset E$ integral closed, $\dim Z = d+1$} \\
Ob(\mfA_Y) & = & \{ X \to Y \mid \textrm{ projective} \} \\
Mor(X, X') & = & \{  X \to X' \mid \textrm{$Y$-isom\mbox{.} onto an {open and closed substack} of $X'$} \} \\
Ob(\mfB_X) & = & \{ E \to X \mid \textrm{ vector bundle of constant rank} \} \\
Mor(E, F) & = & \{ E \to F \mid \textrm{ surjective vector bundle morphism } \}\,.
\end{eqnarray*}
The subgroup $\widehat{B}_d (X)$ is defined to be
\[
\widehat{B}_d(X) = \coprod_{\begin{subarray}{c}
p_1, p_2 : W \to X \\
g \circ p_1 \cong g \circ p_2
\end{subarray}} \coprod_{E, F \in \mfB_{X}} Z_{E, F}\,,
\]
\[
Z_{E, F} = \left\{ p_{2*}\beta_2 - p_{1*}\beta_1 \left| \begin{array}{c}
\beta_1 \in \CH_{d+\rk E}^\circ(p_1^{*}E), \quad \beta_2 \in \CH_{d+\rk F}^\circ(p_2^{*}F) \\
\beta_1 \sim \beta_2 \textrm{ in } \widehat{\CH}_d(W) 
\end{array} \right. 
\right\}
\]
where the first union is over {\it projective} morphisms and introduces a dependence on the morphism $g : X \to Y$ through the constraint $g \circ p_1 \cong g \circ p_2$. This accomplishes in a single step what the original definition \cite[Definition 2.1.4(iii)]{kreschartin} does in two by assuming the vector bundles to be of constant rank.

The above definition facilitates projective pushforwards: the cycle $(g, \alpha)$ pushes forward under $f$ to $(f\circ g, \alpha)$. A projective morphism $S \to U$ to an open substack $U \subset Y$ can be realized as the pullback of a projective morphism to $Y$ \cite[Corollary 2.3.2]{kreschartin}. It is unknown whether this property can be extended to proper morphisms.

\begin{definition}\label{def:restChow}
Let $f : X \to Y$ be a morphism of algebraic stacks. We further define restricted Chow groups formed from the pullbacks of vector bundles,
\[\CH^f_d (X) = \varinjlim_{Y' \to Y \in \mfA_{Y}} \left[ \widehat{\CH}_d^{f'} \left( X' \right) / \widehat{B}_d^{f'} \left( X' \right) \right],
\]
where $f' : X' \to Y'$ is the pullback of $f$ by the projective morphism $Y' \to Y$.
Recall that the restricted Edidin--Graham--Totaro Chow groups \cite[Definition 2.1.4(iv)]{kreschartin} are the groups defined by
\begin{equation}\label{e:restrictedvbchow}
\widehat{\CH}^{f'}_d (X') = \varinjlim_{E \in \mfB_{Y'}}\CH^\circ_{d + \rk E} (f^{'*} E)\,.
\end{equation}
The quotient group
\[
\widehat{B}_d^{f'}X' = \coprod_{\begin{subarray}{c} p_1, p_2 : W \to Y' \\ g \circ p_1 \cong g \circ p_2 \end{subarray}} \coprod_{E,F \in \mfB_{Y'}} Z_{E, F}^f\,,
\]
where
\[
Z_{E, F}^f = \left\{ 
p_{2*}' \beta_2 - p_{1*}' \beta_1 \left| 
\begin{array}{c}
\beta_1 \in \CH^\circ_{d + \rk E}({p'}_1^{*}{f'}^{*}E)\,, \,\beta_2 \in \CH^\circ_{d + \rk F}({p'}_2^{*}{f'}^{*}F) \\
\beta_1 \sim \beta_2 \textrm{ in } \widehat{\CH}_d^{f''}(W')
\end{array} \right.
\right\}\,, 
\]
depends on the notation,
\begin{equation*}
\begin{tikzcd} 
 W' \arrow[yshift=0.9ex,"p'_1"]{r} \arrow[yshift=-0.9ex,"p'_2"]{r} \arrow[d,"f''"] & X' \arrow[r,"g'"] \arrow[d,"f'"] & X \arrow[d,"f"]\\
 W  \arrow[yshift=0.9ex,"p_1"]{r} \arrow[yshift=-0.9ex,"p_2"]{r} & Y' \arrow[r,"g"] & Y\,.
\end{tikzcd}
\end{equation*}

Recall that we only consider vector bundles of constant rank. Therefore a cycle in $\CH^f_d (X)$ is represented by $(g, \alpha')$, $\alpha' \in \CH^\circ_{d + \rk E'} (E')$ as in the pullback diagram:
\begin{equation}\label{e:cycledef}
\begin{tikzcd}
    E' \ar[r] \ar[d,"f''"] & X' \ar[r] \ar[d,"f'"] & X \ar[d,"f\ \mathrm{rep}"] \\
    E \ar[r,"\pi"] & Y' \ar[r,"g\ \mathrm{proj}"] & Y\,.
\end{tikzcd}
\end{equation}
\noindent
There is a natural morphism from the restricted Chow groups to the usual Chow groups $$\iota_f : \CH^f_d(X) \to \CH_d(X)\,.$$
\end{definition}
When $f$ is proper and representable, the cycles in $\CH_d^f(X)$ can be pushed forward between naive Chow groups by direct generalization of the definition of pushforward for Deligne-Mumford stacks \cite[Definition 3.6]{vistoli}. If $E$ is a vector bundle on $Y'$, then $f_*$ pushes the class represented by a cycle $\alpha \in \CH^{\circ}_*(E')$ forward to ${f''}_*(\alpha) \in \CH^{\circ}_*(E)$ in the notation of diagram (\ref{e:cycledef}).

\begin{lemma}\label{p:restrictedpushforward}
If $f : X \to Y$ is a proper, representable morphism, then there is a proper pushforward $f_* : \CH^f_d(X) \to \CH_d (Y)$ for all $d$.
\end{lemma}

A morphism $f : X \to Y$ of stacks is \emph{of relative Deligne-Mumford type} if any morphism $T\to Y$ from a scheme $T$ to $Y$ pulls back to a Deligne-Mumford stack $X\times_Y T$. Note that representable morphisms are of relative Deligne-Mumford type. The proper pushforward of naive Chow groups $\CH^\circ(-)_\QQ$ along relative Deligne-Mumford type morphism is defined as follows: for $f\colon X\to Y$ a relative Deligne-Mumford type, proper morphism between algebraic stacks, we  construct a pushforward map
\begin{equation}\label{eqn:naivepush}
    f_*\colon Z_*(X)_\QQ\to Z_*(Y)_\QQ\,.
\end{equation}
%and
%\begin{equation}\label{eqn:naiverat}
%    f_*\colon \Rat_*(X)_\QQ \to \Rat_*(Y)_\QQ\,.
%\end{equation}
To define \eqref{eqn:naivepush}, the degree of $f$ defined in \cite{vistoli} should be extended to this setting. We first consider the case when $X$ is an integral algebraic stack and $Y$ is the image of $f$. 
For a smooth surjective morphism $u: U\to Y$ from an integral scheme $U$, consider the cartesian diagram
\begin{equation*}
\begin{tikzcd}
U_X\arrow[d]\arrow[r,"f'"] &U\arrow[d,"u"]  \\
X\arrow[r,"f"] & Y.
\end{tikzcd}
\end{equation*}
Since $f$ is relative Deligne-Mumford type, the fiber product $U_X$ is a Deligne-Mumford stack. Moreover $U_X$ is reduced because $U_X\to X$ is smooth and $X$ is reduced.
Let $U_X= \cup_i U_i$ be a finite number of irreducible components of $U$ and let $f_i'\colon U_i\to U$ be the restriction of $f'$ to each irreducible component. We define
\[\deg f =\begin{cases}
\sum_i \deg f'_i& \text{, when $\dim X = \dim Y$}\\
0& \text{, otherwise}
\end{cases}\]
where $\deg f'_i$ is the degree of $f'_i$ between Deligne-Mumford stacks defined in \cite{vistoli}\footnote{For a proper Deligne-Mumford type morphism, the degree is a rational number in general.}. This definition is independent of the choice of a smooth atlas $u$. 
For an arbitrary $\QQ$-linear combination of integral cycles $Z=\sum_j a_j\cdot Z_j$ the map \eqref{eqn:naivepush} is defined by
\[[Z] \mapsto \sum_j a_j\deg(f|_{Z_j}) \cdot [f(Z_j)]\,.\]
It is straightforward to check that $f_*$ is functorial. 

%\jocomment{I inserted the following attempt at a proof; please read carefully}
Similarly, we can define a proper pushforward
\[ f_*\colon \Rat_*(X) \to \Rat_*(Y) \]
on the space of rational equivalences. Let $Z \subseteq X$ be an integral closed substack of dimension $d+1$ and let $\varphi \in k(Z)^*$, giving an element $(Z, \varphi) \in \Rat_d(Y)$. Then for $Z'=f(Z)\subseteq Y$ we set $f_*(Z, \varphi)=0$ if $\dim Z'<\dim Z$ and $f_*(Z, \varphi)=(Z',N_{k(Z)/k(Z')}(\varphi))$ otherwise, where $N_{k(Z)/k(Z')}$ is the norm map of the finite field extension $k(Z)/k(Z')$. 

Before continuing, we want to argue that this pushforward of rational equivalences is compatible with taking smooth covers. Indeed, given a smooth surjective morphism $U \to Y$ of relative dimension $e$ consider the fibre diagram
\[
\begin{tikzcd}
V \arrow[d] \arrow[r,"\widehat{f}"] & U \arrow[d]\\
X \arrow[r,"f"] & Y
\end{tikzcd}\,.
\]
Pullbacks to $U,V$ induce maps $\Rat_*(Y) \to \Rat_{*+e}(U)$ and $\Rat_*(X) \to \Rat_{*+e}(V)$ and we claim that the natural diagram
\begin{equation} \label{eqn:fieldextension}
\begin{tikzcd}
\Rat_{*+e}(V) \arrow[r,"\widehat{f}_*"] & \Rat_{*+e}(U) \\
\Rat_*(X) \arrow[u] \arrow[r,"f_*"] & \Rat_*(Y) \arrow[u]
\end{tikzcd}\,.
\end{equation}
commutes. To see this, let $Z \subseteq X$ be an integral closed substack, write $Z'=f(Z)$ and denote by $Z_V, Z'_U$ their preimages in $V,U$. Then $\dim(Z')<\dim(Z)$ if and only if $\dim(Z'_U)<\dim(Z_V)$. On the other hand, when we have equality of dimensions and denote by $Z_{V,i}, Z'_{U,i}$ the irreducible components of $Z_V, Z'_U$, it holds that $k(Z_{V,i}) = k(Z) \otimes_{k(Z')} k(Z'_{U,i})$. 
%\jocomment{Are you happy with this version?}
Then we claim that for $\varphi \in k(Z)$ we have 
\begin{equation} \label{eqn:norm}
N_{k(Z)/k(Z')}(\varphi) = N_{k(Z_{V,i})/k(Z'_{U,i})}(\varphi)\,,
\end{equation}
which finishes the proof of the commutativity of the diagram \eqref{eqn:fieldextension}. But indeed, the claim \eqref{eqn:norm} follows from the definition of the norm, since the determinant of an endomorphism of a vector space over some field $K$ is unchanged by tensoring with an extension field of $K$.

\begin{lemma}
The pushforward $f_*$ commutes with the boundary map $\delta$ and thus passes through rational equivalence.
\end{lemma}
\begin{proof}
We first argue that it is enough to prove the compatibility on a smooth atlas of $Y$. Let $U\to Y$ be a smooth atlas of relative dimension $e$, then we have a commutative diagram
\begin{equation*}
\begin{tikzcd}
    0 \arrow[r] & Z_*(Y) \arrow[r] & Z_{*+e}(U) \arrow[r, r, yshift=0.5ex] \arrow[r, yshift=-0.5ex] & Z_{*+2e}(U \times_Y U)\,,\\
    0 \arrow[r]& \Rat_*(Y) \arrow[u,"\delta"] \arrow[r]& \Rat_{*+e}(U) \arrow[u,"\delta"] \arrow[r, r, yshift=0.5ex] \arrow[r, yshift=-0.5ex]& \Rat_{*+2e}(U \times_Y U) \arrow[u,"\delta"]
\end{tikzcd}
\end{equation*}
where the rows are equalizer sequences. There is a similar diagram for the induced map $U \times_Y X \to X$, and this diagram maps to the diagram above via the proper pushforward $f_*$.
But then due to the injectivity of the map $Z_*(Y) \to Z_{*+e}(U)$, the claimed equality of maps
\[
f_* \circ \delta = \delta \circ f_* : \Rat_*(X) \to Z_*(Y)
\]
can be checked on $U$. Thus we can reduce to checking the lemma for a target which is a scheme, in which case the corresponding argument was given in the proof of \cite[Proposition 3.7]{vistoli}.\footnote{We want to thank Andrew Kresch and Rachel Webb for many helpful discussions on technical details of this proof.}
\end{proof}
\subsection{Properties of restricted Chow groups}
%%%%%%%%%%%%%%%%%%%%%%%%%%%%%%%%%%%%%%

\begin{definition}[\cite{totaro-1999}]\label{d:approxvectorbundle}
An approximating vector bundle of $X$ in codimension $d$ is a vector bundle $E \to X$ such that $E \setminus S$ is an algebraic space for a closed substack $S$ with $\codim_E S > d$. It is called approximating, because $\CH_j(X) \isomarrow \CH^{\circ}_{j+\rk E}(E)$ for large enough $j$ (\cite[Corollary 2.4.9]{kreschartin}).
%\Ycomment{Why did Skowera put this sentence? This is not the content of [Cor 2.4.9] and this is what we have to prove in Lemma B.6.}
%\jocomment{The cited Corollary says that $\CH_j(X) \isomarrow \CH_{j+\rk E}(E)$ and the excision sequence shows that $\CH_{j+\rk E}(E)\isomarrow \CH_{j+\rk E}(E \setminus S)$, and since $E \setminus S$ is an algebraic space, its Kresch-Chow group agrees with its naive Chow group (by \cite[Remark 2.1.16]{kreschartin}). And if naive Chow groups still satisfy excision, then you can go back to the naive Chow group of $E$ and obtain the statement that Skowera claimed.}
%
%\jocomment{And I also agree with your last remark: I think the Lemma below just states that the inverse of the isomorphism $\CH_j(X) \isomarrow \CH^{\circ}_{j+\rk E}(E)$ claimed above is given by the map sending a naive cycle $\delta$ to $(1_X, \delta) \in \CH_j(X)$.}
\end{definition}
Let $X = [\widehat{X}/G]$ be a global quotient stack. Then for any $d$, $X$ has an approximating vector bundle in codimension $d$ which is the pullback of $\Hom_k(k^{N+n}, W)$ from $BG$ for a faithful representation $W$ of $G$ of dimension $n$ and $N$ large (\cite[Remark 1.4]{totaro-1999}).

%Indeed, take $S$ to be the closed subvariety defined by the vanishing of all $n$-minors. Then $S$ has codimension $(N+1)^2$ and $\Hom(k^{N+n}, W) \setminus S \isom GL(N+n) / GL(N+n)_U$ for any $n$-dimensional subspace $U \subset k^{N+n}$.\cite[Remark 1.4]{totaro-1999}
%\Ycomment{Honestly I could not understand the previous proof... maybe we can fix as follows?}
\begin{lemma}\label{l:equivcycles}
Let $f: X \to Y$ be a representable morphism. Let $(g, \alpha)$ be a cycle in $\CH_d(X)$ or $\CH^f_d(X)$ and let $F$ be an approximating vector bundle of $X$ in codimension $D$, respectively the pullback by $f$ of an approximating vector bundle on $Y$ in codimension $D$. When $D$ is sufficiently large, we have $(g, \alpha) \sim (1_X, \delta)$ for some naive cycle $\delta \in \CH^{\circ}_{d + \rk F}(F)$. 
In fact, the natural map $\CH^\circ _{d+\rk F}(F) \to \CH_d(X), \delta \mapsto (1_X, \delta)$ is an isomorphism.
\end{lemma}
\begin{proof}
We consider the case of $\CH_d(X)$, the case of $\CH^f_d(X)$ is analogous. Let $\alpha \in \CH_{d+\rk E}^\circ (E)$ be a representative of a cycle in $\CH_d(X)$. Consider the following cartesian diagram
\begin{equation*}
\begin{tikzcd}
  E \oplus H \arrow[d,"p"] \ar[r,"q"] & H \ar[r,"s"] \ar[d] & F \ar[d] \\
  E \ar[r,"\mathrm{v.b.}"] & X' \ar[r,"g~ \mathrm{proj}"] & X\,.
\end{tikzcd}    
\end{equation*}
Let $S\subset F$ be a closed substack with $\codim_F S>D$ and such that $F\setminus S$ is an algebraic space. 
Then the intersection of $S$ with any fibre of $F \to X$ has codimension at least $D-\dim(X)$ inside the fibre, and thus, after taking the cartesian diagram with $g: X' \to X$, we have
\[
\codim_H s^{-1}(S)>D - \dim(X)\,.
\]
Moreover, since $s$ is projective we know that $H$ is an algebraic space away from $s^{-1}(S)$. Let $r = \rk F$, then for $D>\dim(X)+\dim(X')-d$, we have that $\CH_{d+r}(H) \cong \CH_{d+r}^\circ(H)$ and likewise $\CH_{d+r+\rk E}(E \oplus H) \cong \CH_{d+r+\rk E}^\circ(E \oplus H)$. {Since the pullback $q^*$ on the usual Chow groups is an isomorphism by} \cite[Theorem 2.1.12 (vi)]{kreschartin}, we conclude that
\[q^*: \CH_{d+r}^\circ(H)\to \CH^\circ_{d+r+\rk E} (E\oplus H)\]
is {an isomorphism}.

Now starting with the cycle $\alpha$ on $E$, we have that 
$\beta=p^*\alpha$ is a cycle on $E\oplus H$.  {Since the map $q^*$ above is an isomorphism, there exist a unique} $\gamma \in \CH_{d+r}^\circ(H)$ such that $\beta = q^{*}\gamma$. Then on the one hand we have $(g, \alpha) \sim (g, \beta) \sim (g, \gamma)$ by \cite[Remark 2.1.5]{kreschartin}. On the other hand, defining $\delta = s_{*}\gamma$  we have $(g, \gamma) \sim (1_X, \delta)$ as in \cite[Remark 2.1.16]{kreschartin}, concluding the proof. Going through this construction, the association $(g,\alpha) \mapsto (1_X, \delta)$ gives a well-defined map, and it is immediate to check that it gives the inverse to the natural homomorphism $\CH^\circ _{d+\rk F}(F) \to \CH_d(X)$.
\end{proof}
%\Ycomment{Old version of the proof: Then $\beta=p^*\alpha$ is a cycle on $E\oplus H$. Let $r=\rk F$. Since $H$ is algebraic space in dimension $d+r$, the pullback $q^*$ is surjective.  Let $\beta = q^{*}\gamma$ and $\delta = s_{*}\gamma$ so that $(g, \alpha) \sim (g, \gamma)$. But $(g, \gamma) \sim (1_X, \delta)$ as in \cite[Remark 2.1.16]{kreschartin}.}

\begin{proposition}\label{p:quotstackisom}
If $Y$ is a global quotient stack and $f : X \to Y$ is representable, then for all $d$ the natural morphism $\iota_f: \CH^f_d(X) \to \CH_d(X)$ is an isomorphism.
\end{proposition}
\begin{proof}
In the notation of (\ref{e:cycledef}), {let $F$ be the pullback of an approximating vector bundle on $Y$.} Then $\iota_f$ factors as
\[\CH^f_d(X) \isomto \CH^{\circ}_{d+\rk F}(F) \isomto \CH_{d+\rk F}(F) \isomfrom \CH_d(X)\,.\]
The association $(g, \alpha) \mapsto (1_X, \delta)$ of Lemma \ref{l:equivcycles} defines the first isomorphism. Its inverse is the inclusion $\CH^{\circ}_{d+\rk F}(F) \to \CH^f_d(X)$. The second map is an isomorphism because $F$ is an algebraic space in dimension $d+\rk F$ (\cite[Theorem 2.1.12(i)]{kreschartin}). The third map is an isomorphism by homotopy invariance. \cite[Corollary 2.4.9]{kreschartin}.
% pullbacks of $F$ by $g$ and $\pi$ are approximating vector bundles and a cycle in $A^{\circ}_{d+\rk E'}(E')$ can be pulled and pushed through them to $A^{\circ}_{d+\rk F}(F)$. We can write out the inverse of this inclusion $A^f_d(X) \to A^{\circ}_{d + \rk F}(F)$, so it is an isomorphism. But by the definition of an approximating bundle, $A^{\circ}_{d+\rk F}(F) \isom A_d(X)$.
\end{proof}
% For the first isomorphism, let $(g, \alpha) \in A^f_dY$, and let $F' \to Y$ be the pullback of $F$. It is an appoximating vector bundle on $Y$. Forming pullbacks gives
%\[
%\xymatrix{
%E' \oplus F'' \ar[r]_q \ar[d]_p & F'' \ar[r]^{g'} \ar[d] & F' \ar[d] \\
%E' \ar[r] & Y' \ar[r]_g & Y
%}
%\]
%where $\alpha \in A^{\circ}_{d + \rk E'}(E')$. Since $g$ is representable, $F''$ is also an approximating vector bundle, and $E' \oplus F'' \to F''$ is a vector bundle on an algebraic space apart from a compliment of sufficiently high codimension. Hence $A^{\circ}_{d+\rk E'\oplus F''}E' \oplus F'' \cong A^{\circ}_{d+\rk F''}F''$. Upon pulling $\alpha$ back to $E' \oplus F''$, we may thus map it forward to some $\beta := (q^*)^{-1}p^*\alpha \in A^{\circ}_{d+\rk F'}F'$. So $(g, \alpha) = (g, \beta)$ in $A^f_dY$. Furthermore, $(g, \beta) = (1_Y, g'_* \beta)$ in $A_d^{f}(Y)$, because the argument

\begin{proposition}
If $f : X \to Y$ is of relative Deligne-Mumford type and $Y$ is a global quotient stack, then the natural morphism $\iota_f : \CH^f_d(X, \QQ) \to \CH_d(X, \QQ)$ is an isomorphism for all $d$.
\end{proposition}
\begin{proof}
By the fact that $\CH^{\circ}_d(X, \QQ) \cong \CH_d(X, \QQ)$ for any Deligne-Mumford stack $X$ \cite[Theorem 2.1.12(ii)]{kreschartin}, the morphism $\iota_f$ factors in the same way as $\iota_f$ in Proposition \ref{p:quotstackisom} over $\QQ$.
\end{proof}
Let $f:X\to Y$ be a proper morphism of relative Deligne-Mumford type and let $h:W\to Y$ be a morphism of algebraic stacks. Consider the following cartesian diagram
\begin{equation*}
\begin{tikzcd}
V \ar[r] \ar[d,"\tilde{f}"] & X \ar[d,"f"] \\
W \ar[r,"h"] & Y\,.
\end{tikzcd}
\end{equation*}
For each $d$, the groups $\CH^f_d (X)$ satisfy the following properties:
\begin{itemize}
\item[(i)] Let the above morphism $h:W \to Y$ be a flat morphism of relative dimension $r$. Then there is a functorial pullback homomorphism $\CH^f_d(X) \to \CH^{\tilde{f}}_{d+r}(V)$.
\item[(ii)] Let the above morphism $h:W \to Y$ be a projective morphism, then there is a functorial pushforward homomorphism $\CH^{\tilde{f}}_d(V) \to \CH^f_d(X)$.
\item[(iii)] %\jocomment{I rephrased the third point and included the diagrams, you think it helps?}\Ycomment{Nice!} 
The homomorphisms (i) and (ii) are compatible with the natural morphism from restricted Chow groups to the usual Chow groups; that is, we have commutative diagrams %$\CH^f_d(X)\to \CH_d(X)$.
\[
\begin{tikzcd}
\CH^f_d(X) \arrow[r,"h^*"] \arrow[d] & \CH^{\tilde{f}}_{d+r}(V) \arrow[d]\\
\CH_d(X) \arrow[r,"h^*"] & \CH_{d+r}(V)
\end{tikzcd}
\quad \text{and} \quad
\begin{tikzcd}
\CH^{\tilde{f}}_d(V) \arrow[r,"h_*"] \arrow[d] & \CH^f_d(X) \arrow[d]\\
\CH_d(V) \arrow[r,"h_*"] & \CH_d(X)
\end{tikzcd}\,.
\]
\end{itemize}
\begin{proposition}[Excision]\label{p:restrictedexcision}
Let $i : Z \to Y$ be a closed substack, and $j : U \to Y$ its complement. Let $Z' \to X$ be the preimage of $Z$ under $f$, and let $U'$ be its complement. Let $f' = f|_{Z'}$ and $\widetilde{f} = f|_{U'}$:
\begin{equation*}
\begin{tikzcd}
Z'\arrow[r]\arrow[d,"f'"] & X\arrow[d,"f"] & U'\arrow[l]\arrow[d,"\tilde{f}"]\\
Z \arrow[r,"i"] & Y & U\arrow[l,"j",swap]\,. 
\end{tikzcd}    
\end{equation*}
Then for each $d$ the flat pullback and projective pushforward fit together into an exact sequence
\begin{equation*}
\begin{tikzcd}
\CH^{f'}_d(Z') \ar[r,"i_*"] & \CH^f_d(X) \ar[r,"j^*"] & \CH^{\widetilde{f}}_d(U') \ar[r] & 0\,.
\end{tikzcd}    
\end{equation*}
\end{proposition}
\begin{proof}
This is \cite[Proposition 2.3.6]{kreschartin} where constructions are performed in the lower level $(Y, Z, U)$ and pulled back by $f$.
\end{proof}
Recall that the excision sequence for Chow groups may be extended using underlined Chow groups. Imitating the definition of Chow groups, these were defined in \cite{kreschartin}, beginning with a ``naive" variant $\underline{A}^\circ_d(X)$ in \cite[Definition 4.1.3]{kreschartin}, which satisfies functoriality under proper pushforwards and flat pullbacks. As before, the theory is extended via limits over projective morphisms $Z \to X$ and vector bundles on $Z$ in \cite[Corollary 4.1.10]{kreschartin}. Here we define their restricted analogues. 
\begin{definition}
Let $f : X \to Y$ be a morphism of algebraic stacks. Let
\[
\underline{A}^f_d (X) =
\varinjlim_{Y' \in \mfA_{Y}} \left[ \underline{\widehat{A}}_d^{f'} \left( X' \right) / \underline{\widehat{B}}_d^{f'} \left( X' \right) \right]
\]
where $\underline{\widehat{A}}_d^{f'}$ is defined in analogy with equation (\ref{e:restrictedvbchow}). 
%\jocomment{My guess is the following, which is just "take all the text below equation (\ref{e:restrictedvbchow}) and replace Chow groups by underlined Chow groups" :} 
The quotient group 
\[
\underline{\widehat{B}}_d^{f'}X' = \coprod_{\begin{subarray}{c} p_1, p_2 : W \to Y' \\ g \circ p_1 \cong g \circ p_2 \end{subarray}} \coprod_{E,F \in \mfB_{Y'}} \underline{Z}_{E, F}^f\,,
\]
where
\[
\underline{Z}_{E, F}^f = \left\{ 
p_{2*}' \beta_2 - p_{1*}' \beta_1 \left| 
\begin{array}{c}
\beta_1 \in \underline{A}^\circ_{d + \rk E}({p'}_1^{*}{f'}^{*}E)\,, \,\beta_2 \in \underline{A}^\circ_{d + \rk F}({p'}_2^{*}{f'}^{*}F) \\
\beta_1 \sim \beta_2 \textrm{ in } \widehat{\underline{A}}_d^{f''}(W')
\end{array} \right.
\right\}\,, 
\]
depends on the diagram used in the Definition \ref{def:restChow}.
There is a natural homomorphism $\iota_f: \underline{A}^f_d(X) \to \underline{A}_d(X)$.
\end{definition}

\begin{proposition}\label{p:vbondm}
Let $p : E \to Y$ be a vector bundle on a Deligne-Mumford stack. Then the natural flat pullback map $p^* : \underline{A}_d^\circ(Y, \QQ) \to \underline{A}_{d + \rk E}^\circ(E, \QQ)$ is a surjection for all $d$. %(Cf. \cite[Remark 2.1.16]{kreschartin})
\end{proposition}
\begin{proof}
First, we note that the existence of the map $p^*$ follows directly from the definition of $\underline{A}_d^\circ$, as was observed in \cite[Remark 4.1.4]{kreschartin}. To show that it is surjective, we first reduce to the case of global quotient stacks of the form $[W/G]$ for a scheme $W$ and a finite group $G$. Assuming the proposition holds for such quotient stacks, let $U \subset Y$ be such a stack, non-empty and open in $Y$. Such $U$ always exists by \cite[Corollaire 6.1.1]{laumon-2000}. By naturality of the long exact sequence in \cite[equation (4.2.1)]{kreschartin}, there are morphisms
\begin{equation*}
\begin{tikzcd}
    \underline{A}_*^{\circ}(Z, \QQ) \ar[d] \ar[r] & \underline{A}_*^{\circ}(Y, \QQ) \ar[d] \ar[r] & \underline{A}_*^\circ(U, \QQ) \ar[d,"\sim"] \ar[r,"\delta"] & \CH_*^\circ(Z, \QQ) \ar[d,"\sim"] \\
\underline{A}_*^{\circ}(E|_Z, \QQ) \ar[r] & \underline{A}_*^\circ(E, \QQ) \ar[r] & \underline{A}_*^\circ(E|_U, \QQ) \ar[r] & \CH_*^\circ(E|_Z, \QQ)
\end{tikzcd}    
\end{equation*}
where $Z = Y \setminus U$. The rightmost morphism is an isomorphism by homotopy invariance of Deligne-Mumford stacks with rational coefficients. By noetherian induction on $Y$, the leftmost vertical morphism is surjective. Hence $\underline{A}_*^\circ(Y,\QQ) \to \underline{A}_*^\circ(E, \QQ)$ is a surjection by the four lemma.

For the base case, consider a quotient $[W / G]$ with vector bundle $E$. Then $E$ has the form $[V / G]$ for a vector bundle $V \to W$. There is a diagram
\begin{equation*}
\begin{tikzcd}
\CH_*(W; 1)_{\QQ} \ar[r,"\sim"] & \CH_*(V;1)_{\QQ} \\
\underline{A}_*^\circ(W, \QQ) \ar[xshift=0.9ex,d,"q_*"] \ar[u,"\sim"] \ar[r,"\sim", "s^*"'] & \underline{A}_*^\circ(V, \QQ) \ar[xshift=0.9ex,d,"r_*"] \ar[u,"\sim"] \\
\underline{A}_*^\circ([W/G], \QQ) \ar[xshift=-0.9ex,u,"q^*"] \ar[r,"p^*"'] & \underline{A}_*^\circ(E, \QQ) \ar[xshift=-0.9ex,u,"r^*"]
\end{tikzcd}    
\end{equation*}
The upper square vertical isomorphisms follow from the natural isomorphism for schemes $\underline{A}_*^\circ(-) \isomarrow \CH_*(-;1)$, see \cite[Proposition 4.1.7]{kreschartin}.
%\jocomment{Skowera is lying here again, since the cited reference says "schemes". But this could be fixed by assuming that $W$ is a scheme, right, but is it true that every DM stack is Zariski locally = [quasiprojective scheme / finite group]? The problem is that finiteness of $G$ is used below.}
The lower squares exist, because the quotient morphisms are flat and proper, and they commute by compatibility of pullback and pushforward. The morphism $r_*$ is surjective, because $r_* \circ r^*$ is multiplication by $|G|$. But $r_* = p^* \circ q_* \circ (s^*)^{-1}$, so $p^*$ must also be surjective. By similar reasoning $p^*$ is injective, hence an isomorphism.
\end{proof}
\noindent 
Using Proposition \ref{p:vbondm}, we can generalize Lemma \ref{l:equivcycles} to the setting of underlined Chow groups.
\begin{lemma}\label{l:equivcyclesunderline}
Let $f: X \to Y$ be a representable (resp. relative Deligne-Mumford type) morphism  and let $F$ be the pullback by $f$ of an approximating vector bundle on $Y$ in codimension $D$. Let furthermore $(g, \alpha)$ be a cycle in $\underline{A}_d(X)$ or $\underline{A}^f_d(X)$ (resp. a cycle in $\underline{A}_d(X, \mathbb{Q})$). When $D$ is sufficiently large, we have $(g, \alpha) \sim (1_X, \delta)$ for some naive cycle $\delta \in \underline{A}^{\circ}_{d + \rk F}(F)$ (resp. $\delta \in \underline{A}^{\circ}_{d + \rk F}(F, \mathbb{Q})$).
In fact, in this case the natural maps \begin{equation} \label{eqn:jisomorphisms} j : \underline{A}^{\circ}_{d + \rk F}(F) \to \underline{A}_{d}(X),\quad j_f : \underline{A}^{\circ}_{d + \rk F}(F) \to \underline{A}^f_{d}(X)\end{equation}
are isomorphisms. 
\end{lemma}
%Using Proposition \ref{p:vbondm}, we see that the proof of Lemma \ref{l:equivcycles} works for $(g, \alpha)$ in $\underline{A}_d(X)$ or $\underline{A}_d(X, \QQ)$ with $F'$ an approximating bundle or, respectively, a Deligne-Mumford stack away from a closed substack of high codimension.
\begin{proof}
To show the lemma, one repeats the proof of Lemma \ref{l:equivcycles} verbatim, replacing (naive) Chow groups with (naive) underlined Chow groups. In the last step of the argument, we 
need to show that the pullback $q^*$ under a vector bundle map $q: E \oplus H \to H$ induces an isomorphism of naive underlined Chow groups in the correct degree, for $f$ representable. To see this, we note that $E \oplus H$ and $H$ are algebraic spaces away from a high-codimension subset and so by the excision sequence for naive underlined Chow groups (\cite[equation (4.2.1)]{kreschartin}) these groups are unchanged by restricting to these open subspaces. Then the isomorphism follows by identifying naive and non-naive underlined Chow groups of these algebraic spaces (\cite[Corollary 4.1.10]{kreschartin}) and using that pullback by a vector bundle induces an isomorphism for the latter (\cite[Proposition 4.3.1]{kreschartin}).\footnote{Note that while these results are stated with scheme assumptions, their proofs (in particular Remark 4.1.2 of \cite{kreschartin}) can be generalized to algebraic spaces. We thank Andrew Kresch for pointing this out.}

On the other hand, in case $f$ is of relative Deligne-Mumford type, we need to show that $q^*$ gives a surjection, which follows from  Proposition \ref{p:vbondm}.
\end{proof}
%\jocomment{If I see correctly, we just want to say here that we can now repeat the proofs of Lemma \ref{l:equivcycles} and Proposition \ref{p:quotstackisom} in the setting of underlined Chow groups; the statement below is the equivalent of Proposition \ref{p:quotstackisom}; maybe we can discuss again what to write here}

% \jocomment{Ok, I reformulated Lemmas \ref{l:equivcycles} and \ref{l:equivcyclesunderline} and simplified the proof of the next proposition. I am still a tiny bit uncertain about one point: in the proof of Lemma \ref{l:equivcycles} we use an excision sequence to remove a high-codimension set without changing Chow, and we use that naive to usual Chow agree on algebraic spaces. But for underlined Chow, we don't have an excision sequence and we only know the equality of naive and usual on schemes \cite[Corollary 4.1.10]{kreschartin}. Can we assume that our global quotient stacks are actually quasiprojective schemes modulo linear group actions? In the proof of Proposition 4.3.2. in \cite{kreschartin} Andrew seems to use that this is enough. And are then approximating line bundles actually schemes away from high codimension? This might get rid of the second problem. For the first I am not sure. }

\begin{proposition}\label{p:underlinerepgq}
If $f : X \to Y$ is representable and $Y=[V/G]$ is the global quotient stack of a quasi-projective scheme $V$ by a linear algebraic group $G$ acting linearly on $U$. Then the natural morphism $\iota_f : \underline{A}_d^f (X) \to \underline{A}_d (X)$ is an isomorphism for all $d$.
\end{proposition}
\begin{proof}
The assumptions on $Y$ imply that it possesses approximating vector bundles which are algebraic spaces away from subsets of arbitrarily large codimension by using \cite[Lemma 9, Proposition 23]{EdidinGraham98}.
Let $F$ be the pullback by $f$ of such a vector bundle, then the map $\iota_f$ is simply the composition $\iota_f = j \circ j_f^{-1}$ for the isomorphisms $j,j_f$ from equation \eqref{eqn:jisomorphisms}.
%\jocomment{I propose to replace $F'$ by $F$ again in this entire proof to be consistent with  Lemma \ref{l:equivcyclesunderline}}\Ycomment{Sure I fixed.}
% Let $F$ be an approximating vector bundle on $X$ pulled back from $Y$. In this case, the association $(g, \alpha) \mapsto (1_X, \delta)$ from Lemma \ref{l:equivcyclesunderline} defines an isomorphism from $\underline{A}^f_d(X)$ whose inverse is the inclusion $j_f : \underline{A}^{\circ}_{d + \rk F}(F) \to \underline{A}^f_d(X)$. Likewise, it defines an isomorphism from $\underline{A}_d(X)$ whose inverse is the inclusion $j : \underline{A}^{\circ}_{d+\rk F}(F) \to \underline{A}_d(X)$. Then $j = \iota_f \circ j_f$, so $\iota_f$ is an isomorphism.
\end{proof}
For relative Deligne-Mumford type morphisms, we have a weaker property which is enough for our purpose.
\begin{proposition}\label{p:underlinerelisom}
If $f : X \to Y$ is of relative Deligne-Mumford type and $Y$ is a global quotient stack, the natural morphism $\iota_f: \underline{A}^f_d(X, \QQ) \to \underline{A}_d(X, \QQ)$ is a surjection for all $d$.
\end{proposition}
\begin{proof}
Let $(g, \alpha) \in \underline{A}_d(X, \QQ)$. By Lemma \ref{l:equivcyclesunderline}, $(g, \alpha) \sim (1_X, \delta)$ for some naive cycle $\delta$ on the pullback of a vector bundle on $Y$ by $f$. Then $(1_X, \delta)$ maps to $(g, \alpha)$ under $\iota_f$.
\end{proof}

\begin{proposition}\label{p:restrictedconnhom}
With the notation of Proposition \ref{p:restrictedexcision}, there is a connecting homomorphism $\delta_f$ fitting into a commutative diagram with the connecting homomorphism of \cite[equation (4.2.2)]{kreschartin},
\begin{equation*}
\begin{tikzcd}
    \underline{A}^{\widetilde{f}}_d (U') \ar[r,"\delta_f"] \ar[d] & \CH^{f'}_d(Z') \ar[d] \\
\underline{A}_d (U') \ar[r,"\delta"] & \CH_d(Z')\,.
\end{tikzcd}    
\end{equation*}
\end{proposition}
\begin{proof}
The construction of $\delta$ in \cite[equation (4.2.2)]{kreschartin} goes through for restricted Chow groups.
\end{proof}

\noindent Propositions \ref{p:restrictedexcision} and \ref{p:restrictedconnhom} fit together in an exact sequence. 

\begin{proposition}\label{p:restrictedexactseq}
With the notation of Proposition \ref{p:restrictedexcision}, let $U$ be a global quotient stack. Then there is an exact sequence
\[
\underline{A}^{\widetilde{f}}_d (U') \stackrel{\delta_f}\to \CH^{f'}_d(Z') \to \CH^f_d(X) \to \CH^{\widetilde{f}}_d(U') \to 0\,.
\]
\end{proposition}
\begin{proof}
The proof is completely analogous to the proof of \cite[Proposition 4.2.1]{kreschartin}.
\end{proof}
\subsection{Main theorems}
Recall that a stack admits a stratification by global quotient stacks if and only if every geometric stabilizer is affine \cite[Proposition 3.5.9]{kreschartin}.
\begin{proposition}
Let $Y$ be a stack stratified by global quotient stacks and let $f : X \to Y$ be representable. Then $\CH^f_d(X) \to \CH_d(X)$ is an isomorphism for all $d$. If, alternatively, $f$ is of relative Deligne-Mumford type, then $\CH^f_d(X, \QQ) \to \CH_d(X, \QQ)$ is an isomorphism for all $d$.
\end{proposition}
\begin{proof}
The proof proceeds by Noetherian induction. Using the same notation as Proposition \ref{p:restrictedexcision}, let $U$ be a nonempty global quotient stack. There is a morphism from the exact sequence of Proposition \ref{p:restrictedexactseq} to the exact sequence of \cite[Proposition 4.2.1]{kreschartin}
\begin{equation*}
\begin{tikzcd}
    \underline{A}^{\widetilde{f}}_d (U') \ar[d,"c'"] \ar[r,"\delta_f"] \ar[d] & \CH^{f'}_d(Z') \ar[r] \ar[d,"a"] & \CH^f_d (X)\ar[r] \ar[d,"b"] & \CH^{\widetilde{f}}_d(U') \ar[r] \ar[d,"c"] & 0 \\
    \underline{A}_d (U') \ar[r,"\delta"] & \CH_d(Z') \ar[r] & \CH_d (X) \ar[r] & \CH_d (U') \ar[r] & 0\,.
\end{tikzcd}    
\end{equation*}
The morphism $a$ is an isomorphism by the induction hypothesis, and morphism $c$ by Proposition \ref{p:quotstackisom}. Then $c'$ is an isomorphism by Proposition \ref{p:underlinerepgq} if $f$ is representable, and is a surjection by Proposition \ref{p:underlinerelisom} if $f$ is of relative Deligne-Mumford type where all Chow groups are taken with $\QQ$-coefficients. By the five lemma, $b$ is also an isomorphism.
\end{proof}

\begin{theorem} \label{thm:properpushforward}
Let $Y$ be {an algebraic} stack stratified by global quotient stacks and let $f : X \to Y$ be a proper, representable morphism. Then there is a proper pushforward $f_* : \CH_d(X) \to \CH_d(Y)$ for all $d$. If, instead, $f$ is proper and of relative Deligne-Mumford type, then there is a proper pushforward $f_* : \CH_d(X, \QQ) \to \CH_d(Y, \QQ)$ for all $d$.
\end{theorem}
\begin{proof}
The pushforward arises from the factorization
\begin{equation*}
\begin{tikzcd}
\CH_d^f (X) \ar[rr] \ar[dr,"\sim"] && \CH_d(Y) \\
& \CH_d (X) \ar[ur] &
\end{tikzcd}\,.    
\end{equation*}
In case $f$ is of relative Deligne-Mumford type, Chow groups should be taken with $\QQ$-coefficients.
\end{proof}
\subsection{Properties of proper pushforward}
The proper pushforward satisfies the following expected properties.
\begin{proposition}\label{prop:properCompatibility}
{Let $Y$ be an algebraic stack stratified by global quotient stacks} and let $f\colon X\to Y$ be a proper, representable morphism as above. Then proper pushforward is compatible with representable flat pullback and refined Gysin maps for representable regular local {immersions}. The same compatibilities hold for Chow groups with $\QQ$-coefficients when $f$ is of relative Deligne-Mumford type.
\end{proposition}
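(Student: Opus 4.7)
The plan is to reduce both compatibilities to the corresponding statements for projective pushforward, which are already established in \cite{kreschartin} for finite type algebraic stacks stratified by quotient stacks. The key observation is that, by the construction of the restricted Chow group $\CH^f_d(X)$ and the isomorphism $\iota_f$ of Theorem \ref{Thm:properpushforward}, every class $\alpha \in \CH_d(X)$ admits a representative $\widetilde\alpha \in \CH^\circ_{d+\textup{rk}\, E}((f')^{*}E)$ after some projective base change $g\colon Y'\to Y$ and a suitable vector bundle $E$ on $Y'$. The pushforward is then defined concretely as $f_*\alpha = g_*\, f'_*\,\widetilde\alpha$, where $f'\colon X'\to Y'$ is the pullback of $f$ and $f'_*$ is performed on the naive Chow group of the vector bundle $(f')^{*}E$.

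For compatibility with flat pullback along $h\colon \widetilde Y\to Y$, I would form the two nested Cartesian squares
\begin{equation*}
\begin{tikzcd}
\widetilde X' \arrow[r,"\widetilde g'"] \arrow[d,"\widetilde f'"] & \widetilde X \arrow[r,"\widetilde h"] \arrow[d,"\widetilde f"] & X \arrow[d,"f"]\\
\widetilde Y' \arrow[r,"\widetilde g"] & \widetilde Y \arrow[r,"h"] & Y\,,
\end{tikzcd}
\end{equation*}
where $\widetilde g$ remains projective as the pullback of $g$. One then checks the chain of equalities
\[ h^{*} f_{*}\alpha \;=\; h^{*} g_{*} f'_{*}\widetilde\alpha \;=\; \widetilde g_{*} \widetilde h^{*} f'_{*}\widetilde\alpha \;=\; \widetilde g_{*}\, \widetilde f_{*}\, (\widetilde g')^{*}\widetilde\alpha \;=\; \widetilde f_{*} \widetilde h^{*}\alpha \]
using, in order, the definition of $f_{*}$, base change for flat pullback against projective pushforward from \cite{kreschartin}, flat-projective base change on the naive Chow groups of the pulled back vector bundles, and finally the definition of $\widetilde f_{*}$. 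The analogous argument handles compatibility with refined Gysin pullback $\iota^{!}$ along a regular local immersion $\iota\colon Z\to Y$: substitute $\iota^{!}$ for $h^{*}$ throughout, and invoke the analogous base change identities between refined Gysin maps, flat pullback on vector bundles, and projective pushforward that are built into Kresch's bivariant framework.

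The main technical obstacle is checking that $h^{*}$ and $\iota^{!}$ actually descend from the naive Chow groups $\CH^\circ_{*}((f')^{*}E)$ to the quotient $\widehat{\CH}^{f'}/\widehat{B}^{f'}$ that defines $\CH^{f}_{d}(X)$, so that the identities above make sense between the restricted Chow groups for $f$ and $\widetilde f$ rather than merely in the auxiliary naive groups. This reduces to showing that flat pullback and refined Gysin maps commute with the auxiliary projective pushforwards $p_{1*}, p_{2*}$ for pairs of projective morphisms $p_{1},p_{2}\colon W\to Y'$ with $g\circ p_{1}\cong g\circ p_{2}$, which is again part of Kresch's framework on finite type stacks stratified by quotient stacks. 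The integral and $\mathbb{Q}$-coefficient versions of the proposition follow the same argument, the only distinction being which hypothesis on $f$ (representable, resp.\ of relative Deligne--Mumford type) is needed to invoke the inverse $\iota_f^{-1}$ provided by Theorem \ref{Thm:properpushforward}.
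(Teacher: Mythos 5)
Your proposal is correct and follows essentially the same route as the paper: represent a class in the restricted Chow group $\CH^f_d(X)$ by a projective base change $Y'\to Y$ together with a vector bundle, pull back the entire representing diagram along the flat morphism (resp.\ regular local immersion), and reduce to the base change statements for naive Chow groups, with well-definedness on the quotient by $\widehat{B}^{f'}$ supplied by the cited results of Skowera and Kresch. (There is only a small notational slip in your displayed chain --- the third expression should be $\widetilde g_*\,\widetilde f'_*$ applied to the pullback of $\widetilde\alpha$ along $\widetilde X'\to X'$ rather than $\widetilde g_*\,\widetilde f_*\,(\widetilde g')^*\widetilde\alpha$ --- but this does not affect the argument.)
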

\begin{proof}
We will prove the case when $f$ is a representable morphism. The proof for Deligne-Mumford type is similar. We first prove the compatibility of proper pushforward with flat pullbacks. Consider a cartesian diagram 
\begin{equation}\label{eqn:flatcartesian}
\begin{tikzcd} 
 W \arrow[r,"v"] \arrow[d,"g"] & X \arrow[d,"f"]\\
 Z \arrow[r,"u"] & Y
\end{tikzcd}
\end{equation}
where $u\colon Z\to Y$ is a representable flat morphism of relative constant dimension $\ell$. Since $u$ is representable, $Z$ is stratified by global quotient stacks (see \cite[Proposition 3.5.5]{kreschartin}).
%\jocomment{don't we need some criterion here for the target $Z$ of $g$ to be stratified by global quotients? I think we want that $u$ is a representable, flat morphism/representable regular local immersion. Then we can use \cite[Proposition 3.5.5]{kreschartin}.}\Ycomment{I think you are right.}
By property (iii) mentioned above Proposition \ref{p:restrictedexcision}  the natural isomorphism $\CH^f_*\to \CH_*$ is compatible with flat pullbacks. A class $(\rho,\alpha)$ in $\CH^f_*(X)$ is represented by the following cartesian diagram
\begin{equation}\label{eqn:representationProper}
\begin{tikzcd} 
 E' \arrow[r] \arrow[d,"f''"] & X' \arrow[r] \arrow[d,"f'"] & X \arrow[d,"f"]\\
 E \arrow[r,"\pi"] & Y' \arrow[r,"\rho"] & Y
\end{tikzcd}
\end{equation}
where $\rho$ is a projective morphism, $E$ is a vector bundle on $Y'$ and $\alpha$ is a class in $\CH^\circ_{d+\mathrm{rk} E'}(E')$.
% \footnote{$\CH^\circ_*(X)$ is so-called the naive Chow group of $X$ (\cite[Definition 2.1.4]{kreschartin}).}. 
By pulling back (\ref{eqn:representationProper}) along (\ref{eqn:flatcartesian}), we get
\begin{equation*}
\begin{tikzcd} 
 \CH^\circ_{d+\mathrm{rk} E'}(E') \arrow[r,"f''_*"] \arrow[d,"v^*"] & \CH^\circ_{d+\mathrm{rk} E'}(E) \arrow[d,"u^*"]\\
 \CH^\circ_{d+\mathrm{rk} E'+\ell}(F') \arrow[r,"g''_*"] & \CH^\circ_{d+\mathrm{rk} E'+\ell}(F)
\end{tikzcd}
\end{equation*}
where $F$ is the pullback of $E$ to $Z\times_Y Y'$ and $F'$ is the pullback of $E'$ to $W\times_X X'$. This diagram commutes because of the corresponding statement for naive Chow groups. Therefore the following diagram
\begin{equation*}
\begin{tikzcd} 
 \CH^f_d(X) \arrow[r,"f_*"] \arrow[d,"v^*"] & \CH_d(Y) \arrow[d,"u^*"]\\
 \CH^g_{d+\ell}(W) \arrow[r,"g_*"] & \CH_{d+\ell}(Z)
\end{tikzcd}
\end{equation*}
commutes.

The compatibility with regular local immersions
states that for a Cartesian diagram \eqref{eqn:flatcartesian} with $u$ a regular local immersion, we have $u^! f_* = g_* v^!$. Note that here the pushforward by $g$ is well-defined since the assumption that $Y$ is stratified by global quotient stacks together with the representability of $u$ implies that $Z$ likewise is stratified by global quotient stacks, using \cite[Proposition 3.5.5]{kreschartin}. Then the desired compatibility
also follows from a formal argument as above (see \cite[Proposition 18]{BHPSS}).
\end{proof}
The following proposition is a generalization of \cite[Lemma 3.8]{vistoli} to algebraic stacks.
\begin{proposition} \label{Pro:surproppush}
Let $X$ and $Y$ be algebraic stacks, finite type over a field $k$ and stratified by {global} quotient stacks. Let $f \colon X \to Y$ be a proper,  surjective morphism of relative Deligne-Mumford type. Then the pushforward \[f_*\colon \CH_d(X,\mathbb{Q}) \to \CH_d(Y,\mathbb{Q})\] is surjective for all $d$.
\end{proposition}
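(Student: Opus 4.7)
The plan is to argue by Noetherian induction on $Y$, reducing via the excision sequence to showing that the fundamental class $[V]$ of each integral closed substack $V \subseteq Y$ lies in the image of $f_*$. For such a $V$, I will find an integral closed substack $W' \subseteq f^{-1}(V)$ of the same dimension as $V$ mapping dominantly and generically finitely onto $V$, and show that $f_*[W']$ is a nonzero rational multiple of $[V]$ modulo classes supported on a proper closed substack.

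First I would set up the Noetherian induction: assume that for every proper closed substack $Z \subsetneq Y$, the restricted pushforward $f_* \colon \CH_d(f^{-1}(Z)) \to \CH_d(Z)$ is surjective (note that $f^{-1}(Z) \to Z$ inherits all the hypotheses on $f$, so the induction hypothesis applies). By Kresch's construction of Chow groups, $\CH_d(Y,\mathbb{Q})$ is generated by fundamental classes of $d$-dimensional integral closed substacks $V \subseteq Y$, so it suffices to show each such $[V]$ is in the image of $f_*$.

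Second, given such a $V$, I would replace $Y$ by $V$ and $X$ by $f^{-1}(V)$, reducing to the case where $Y$ is integral with $d = \dim Y$ and we must show $[Y] \in \mathrm{im}(f_*)$. Since $f$ is surjective, the fiber of $f$ over the generic point $\eta_Y$ is a nonempty proper Deligne-Mumford stack over $\kappa(\eta_Y)$; picking any closed point of this generic fiber and taking its closure in $X$ yields an integral closed substack $W' \subseteq X$ with $\dim W' = d$ mapping dominantly onto $Y$. The restricted map $f' \colon W' \to Y$ is proper, dominant, of relative Deligne-Mumford type, and between integral stacks of the same dimension.

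Third, I would compute $f_*[W']$ generically on $Y$. By generic flatness (applied after pulling back to a smooth atlas of $Y$, where it reduces to the standard generic flatness theorem for proper DM stacks over a scheme), there exists a dense open $U \subseteq Y$ such that $W'|_U \to U$ is finite and flat of some generic degree $n$, which is a positive rational number (rational rather than integral because of the possible stabilizer groups in the Deligne-Mumford setting, but positive since the map is surjective). Hence $f_*[W']|_U = n \cdot [U]$ in $\CH_d(U) = \mathbb{Q}\cdot[U]$, so by Proposition~\ref{Pro:excisionsequenceLindelof} the class $f_*[W'] - n[Y]$ is supported on $Y \setminus U$. Applying the Noetherian induction hypothesis to the proper closed substack $Y \setminus U$, there exists $\gamma \in \CH_d(f^{-1}(Y \setminus U)) \subseteq \CH_d(X)$ with $f_*\gamma = f_*[W'] - n[Y]$, and therefore
\[
[Y] \;=\; f_* \!\left( \tfrac{1}{n}([W'] - \gamma) \right),
\]
completing the induction.

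The main obstacle will be making rigorous the existence and positivity of the generic degree $n$ in the relative Deligne-Mumford setting between stacks: one needs a version of generic flatness for proper morphisms of Deligne-Mumford type between integral stacks of equal dimension, and one needs to know that passing to $\mathbb{Q}$-coefficients allows division by the (a priori only rational) stacky degree. Both reduce, after passing to a smooth atlas of $Y$, to standard results for proper Deligne-Mumford stacks over a Noetherian base scheme together with compatibility of proper pushforward with flat pullback from Proposition~\ref{prop:properCompatibility}.
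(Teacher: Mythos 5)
There is a genuine gap at the foundation of your argument: the claim that ``by Kresch's construction of Chow groups, $\CH_d(Y,\mathbb{Q})$ is generated by fundamental classes of $d$-dimensional integral closed substacks $V \subseteq Y$'' is false for Artin stacks. Kresch's Chow groups of a quotient stack agree with Edidin--Graham equivariant Chow groups, whose generators are cycles on finite-dimensional approximations (vector-bundle torsors over the stack), not cycles on the stack itself. The standard counterexample is $Y = B\GG_m$: it has a single integral closed substack (itself, of dimension $-1$), yet $\CH_{-1-j}(B\GG_m) = \QQ$ for every $j \geq 0$, the generator in dimension $-1-j$ being $c_1(L)^j \cap [B\GG_m]$, which is not the class of any closed substack. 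Since the entire induction is organized around hitting classes $[V]$ for integral $V \subseteq Y$, the argument does not get off the ground for the stacks this proposition is actually about.

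The paper's proof shares your outer shell (excision plus Noetherian induction to reduce to a dense open $U \subseteq Y$), but then inserts exactly the step your proposal is missing: since $U$ is a quotient stack, by \cite[Proposition 3.5.10]{kreschartin} it carries a vector bundle $E \to U$ that is represented by a \emph{scheme} outside a locus of arbitrarily high codimension, and $\pi^*$ is an isomorphism on Chow groups. Pulling the whole situation back to $E$ and truncating in the fixed dimension $d$ reduces to the case where the \emph{target is a scheme}. Only at that point is your strategy sound: the source is then a Deligne--Mumford stack, which by \cite[Theorem 2.7]{kreschbrauer} admits a finite surjective cover by a scheme, Vistoli's machinery applies, and his Lemma 3.8 (which is essentially your ``dominant generically finite $W'$ of stacky degree $n$, divide by $n$'' argument, made rigorous with $\QQ$-coefficients) gives surjectivity on naive Chow groups, which for a scheme target coincide with Kresch's. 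So your generic-degree argument is the right endgame, but it only becomes applicable after the reduction to a scheme target via the finite-dimensional approximation; without that reduction the notion of ``generators given by integral substacks'' that your induction relies on simply is not available.
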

\begin{proof}
By the assumption of being stratified by global quotient stacks, there exists a nonempty open substack $U \subset Y$ isomorphic to a quotient $U \cong [T / G]$ of a quasi-projective scheme $T$ by a smooth, connected linear algebraic group $G$ acting linearly on $T$. Let $Z=Y \setminus U$ be the complement.
%\Ycomment{If we want to be super accurate we should say `there exists a nonempty open substack $U\subset Y^{red}$ but maybe this is too much?} 
% I think it's ok: I sent this proof to Andrew once for comments and he was fine with the formulation.
Consider a commutative diagram
\begin{equation} \label{eqn:spfexcision1}
\begin{tikzcd}
\CH_d(f^{-1}(Z),\mathbb{Q}) \arrow[r] \arrow[d]  & \CH_d(X,\mathbb{Q}) \arrow[r] \arrow[d] & \CH_d(f^{-1}(U),\mathbb{Q}) \arrow[r] \arrow[d] & 0\\
 \CH_d(Z,\mathbb{Q}) \arrow[r] & \CH_d(Y,\mathbb{Q}) \arrow[r] & \CH_d(U,\mathbb{Q}) \arrow[r] & 0
\end{tikzcd}
\end{equation}
with exact rows from the excision sequences and vertical arrows from the proper pushforwards by $f$ as defined above. Commutativity of the squares follows from the compatibility of proper pushforward with compositions of proper maps and pullbacks by flat maps. The pushforward $f_*$ is surjective over $Y$ if it is surjective over $U$ and $Z$ by the four lemma. Since the assumptions of the proposition hold for $Z$ and since $Y$ is of finite type, we can reduce by Noetherian induction to showing the statement over $U$.

By \cite[Proposition 3.5.10]{kreschartin}, the global quotient $U$ admits a vector bundle $\pi: E \to U$ such that $E$ is represented by a scheme off a locus of arbitrarily high codimension. Moreover, the pullback $\pi^*$ induces an isomorphism of Chow groups. Let $V=f^{-1}(U)$ and $E' = (f|_{V})^* E \to V$ be the pullback of $E$ under $f$. It suffices to show that the pushforward under $E' \to E$ is surjective because of the compatibility of proper pushforward with flat pullback. This computation can be done away from the locus where $E$ is not a scheme because we work in a fixed dimension $d$. Thus we have reduced to the case  where the target is a scheme.

Now assume that $f:X \to Y$ is as in the proposition and $Y$ is a scheme. Then the domain is a Deligne-Mumford stack 
because $f$ is assumed to be of relative Deligne-Mumford type. By \cite[Theorem 2.7]{kreschbrauer}, such a stack admits a finite surjective morphism from a scheme. %(note: the diagonal of $X$ is separated since $f$ is proper) 
This means that the machinery of Chow groups as developed by Vistoli in \cite{vistoli} is applicable. By \cite[Lemma 3.8]{vistoli}, the pushforward $f_*$ is surjective on naive Chow groups (with $\mathbb{Q}$-coefficients). Since $Y$ is a scheme, the naive Chow groups agree with those defined in \cite{kreschartin}, so for every cycle on $Y$ there exists a naive cycle on $X$ pushing forward to it. Finally, for naive cycles on $X$, the definition of proper pushforward agrees with the naive pushforward, so we are done.
\end{proof}
\begin{remark} \label{Rmk:quotmapprojective}
Even in very good situations, the pushforward by proper, surjective maps of finite type stacks is \emph{not} surjective on naive Chow groups. Indeed, for $n \geq 1$ consider the map
\[f: [\mathbb{P}^n / \mathrm{PGL}_{n+1}] \to B \mathrm{PGL}_{n+1},\]
where $\mathrm{PGL}_{n+1}$ acts in the usual way on $\mathbb{P}^n$. Then $f$ is a representable, proper surjective morphism of quotient stacks
and in fact we claim that it is also projective. To see the latter, note that the line bundle $\mathcal{O}_{\PP^n}(n+1)$ on $\PP^n$ is $\PGL_{n+1}$-linearizable (see e.g. \cite[Example 3.2.7]{brionlinearization}). Thus it descends to a line bundle on $[\mathbb{P}^n / \mathrm{PGL}_{n+1}]$ which is relatively very ample for $f$. Therefore, the proper morphism $f$ is indeed projective. However, even though $f$ has all these nice properties, the pushforward $f_*$ still vanishes on naive Chow groups. Let $V\to [\PP^n/\mathrm{PGL}_{n+1}]$ be an integral closed substack. Let $V_{\PP^n}$ be the fiber product
\begin{equation*}
    \begin{tikzcd}
    V_{\PP^n}\arrow[r]\arrow[d] & V\arrow[d]\\
    \PP^n\arrow[r] & \left[\PP^n/\mathrm{PGL}_{n+1}\right]\,.
    \end{tikzcd}
\end{equation*}
Then $V_{\PP^n}$ is a closed subscheme of $\PP^n$ which is invariant under $\PGL_{n+1}$ so $V_{\PP^n} = \PP^n$ and thus also $V=[\PP^n/\mathrm{PGL}_{n+1}]$. On the other hand $f$ is relative dimension $n$ hence $f_*[V]$=0. 
We are grateful to Andrew Kresch, who pointed out this example to us.
\end{remark}
\begin{remark} \label{Rmk:surproppush}
Since the proper pushforward constructed in this section is compatible with flat pullbacks, it follows immediately from the definition of Chow groups of locally finite type stacks in the previous section that these inherit the proper pushforward construction. In particular, Theorem \ref{thm:properpushforward} and Proposition \ref{prop:properCompatibility} remain true for stacks only locally of finite type.

On the other hand, Proposition \ref{Pro:surproppush} remains true for algebraic stacks $X,Y$ which are Lindel\"of and stratified by {global} quotient stacks (as is the case for the stacks $\M_{g,n,a}$ described in Section \ref{Sect:Avaluedprestable}). 

Indeed, let $U_i$ be a finite type cover of $Y$, then $V_i = f^{-1}(U_i)$ is also finite type since $f$ is proper. Let $K_i$ be the kernel of the proper pushforward $f_*: \CH_*(V_i) \to \CH_*(U_i)$, then applying the proposition we have a commutative diagram
\begin{equation}
\begin{tikzcd}
 0 \arrow[d] & 0\arrow[d] & 0\arrow[d] & \\
 K_{ij}\arrow[d] \arrow[r] & K_i \arrow[d] \arrow[r]& K_j \arrow[d] & \\
 \CH_*(V_i \setminus V_j)\arrow[d] \arrow[r] & \CH_*(V_i)\arrow[d] \arrow[r] & \CH_*(V_j)\arrow[d] \arrow[r] & 0\\
 \CH_*(U_i \setminus U_j)\arrow[d] \arrow[r] & \CH_*(U_i)\arrow[d] \arrow[r] & \CH_*(U_j)\arrow[d] \arrow[r] & 0\\
 0 & 0 & 0 &
\end{tikzcd}
\end{equation}
{The columns are exact by Proposition~\ref{Pro:surproppush} and the middle and the bottom rows are exact by \cite[Proposition  2.3.6]{kreschartin}.}
Applying a small variant of the Snake lemma\footnote{Note that in comparison to the usual situation of the Snake lemma, we don't have injectivity of the map $i_*: \CH_*(U_i \setminus U_j) \to \CH_*(U_i)$. This can be repaired by replacing $\CH_*(U_i \setminus U_j)$ with $\CH_*(U_i \setminus U_j)/\ker(i_*)$ and observing that the map from $\CH_*(V_i \setminus V_j)$ is still surjective.} we see that the maps $K_i \to K_j$ of the directed system $(K_i)_i$ are surjective, so this system is Mittag-Leffler (see \cite[\href{https://stacks.math.columbia.edu/tag/0596}{Tag 0596}]{stacks-project}). Then it follows from \cite[\href{https://stacks.math.columbia.edu/tag/0598}{Tag 0598}]{stacks-project} that the induced map
\[\CH_*(X) = \varprojlim_i \CH_*(V_i) \longrightarrow  \varprojlim_i \CH_*(U_i) = \CH_*(Y)\]
is indeed surjective. 
\end{remark}
%The following is a variant of Proposition \ref{Pro:surproppush} for higher Chow groups.
%\begin{lemma}\label{Pro:surproppushHigher}
%Let $X$ and $Y$ be quotient stacks. Let $f\colon Y\to X$ be a proper, surjective morphism of relative Deligne-Mumford type. Then the pushforward 
%\[f_*\colon \CH_d(Y,1)_\QQ\to \CH_d(X,1)_\QQ\]
%is surjective for all $d$.
%\end{lemma}
%\begin{proof}
%The reduction argument in the proof of Proposition \ref{Pro:surproppush} applies in this case and hence we may assume $X$ is a scheme and $Y$ is a Deligne-Mumford stack. In \cite[Lemma 3.8]{vistoli} the surjectivity of the proper pushforward is proven in the level of cycles. We have a surjective proper pushforward
%\[f_* \colon z^*(Y\times R)_\QQ \to z^*(X\times R)_\QQ\]
%where $z^*(X\times R)$ is the same as the kernel of the boundary map in the Bloch's cycle complex $z^*(X,1)\to z^*(X,0)$. Therefore $f_*$ is surjective on the first higher Chow groups.\Ycomment{I think this proof is wrong. The first higher Chow group is the cohomology of the complex
%\[...\to z^*(X\times R)\xrightarrow{\partial_X} z^*(X)\to 0\]
%so we have to prove that $f_*$ induces surjective map 
%\[ker \partial_X\to ker \partial_Y\,.\]
%This is not clear from the proof above. 
%}
%\end{proof}

\section{Operational Chow groups for algebraic stacks} \label{Sect:OperationalChow}
In this section we give a definition of operational Chow classes for algebraic stacks which we assume throughout to be locally finite type over $k$.
\begin{definition}\label{def:Operationalclass}
An operational class $c$ in the $p$-th operational Chow group $\CHOP^p(X)$ is a collection of homomorphisms
\[c(g)\colon \CH_m(B)\to \CH_{m-p}(B)\]
for all morphisms $g\colon B\to X$ where $B$ is an algebraic stack of finite type over $k$, stratified by {global} quotient stacks and for all integers $m$, compatible with representable proper pushforward, flat pullback, and refined Gysin pullback along representable lci morphisms (see \cite[Section 17.1]{Fulton1984Intersection-th}). In particular the compatibility with refined Gysin pullback means the following compatibility condition: 
consider a diagram
\begin{equation*}
    \begin{tikzcd}
    B' \arrow[r] \arrow[d,"f'"] & Z' \arrow[d,"f"]\\
    B \arrow[d,"g"] \arrow[r] & Z\\
    X & 
    \end{tikzcd}
\end{equation*}
where $g : B \to X$ is a morphism from an algebraic stack $B$ of finite type over $k$, stratified by global quotient stacks, $f: Z' \to Z$ is a representable lci morphism and $Z'$ is stratified by global quotient stacks and the square in the diagram is Cartesian.
Then we require that for all $c\in \CHOP^*(X)$ and $\alpha\in\CH_*(B)$ we have
\[f^!(c(g)\cap\alpha) = c(gf')\cap f^!\alpha \,\textup{ in } \CH_*(B')\,.\]
\end{definition}
This notion of operational Chow group of algebraic stacks shares the following formal properties of operational Chow group of schemes: it is a contravariant functor for all morphisms and has the structure of an associative $\mathbb{Q}$-algebra coming from composing two operations.
Note that for the functoriality under all morphisms, it is important that we did not restrict the morphisms $g : B \to X$ in the definition above e.g. to be representable. Otherwise we would only get functoriality under representable morphisms.

\begin{example}
Let $E$ be a vector bundle on $X$, then its \emph{$r$-th Chern class} $$c_r(E) \in \CHOP^r(X)$$
is naturally an operational class on $X$. Given $g: B \to X$ it acts by
\[(c_r(E))(g)\colon \CH_m(B)\to \CH_{m-p}(B), \alpha \mapsto c_r(g^* E)\cap \alpha\]
where the Chern class of $g^* E$ and its action on the cycle $\alpha$ on $B$ are as defined in \cite{kreschartin}. 
\end{example}

We start with a small observation: the operational Chow group of $X$ can be computed on a suitable finite-type cover of $X$.

\begin{lemma} \label{Lem:CHOPonftcover}
Let $X$ be a locally finite type algebraic stack over $k$. Let $(\mathcal{U}_i)_{i \in I}$ be a directed system of finite type open substacks of $X$ whose union is all of $X$. Then the flat pullbacks $j_i^*$ by the inclusions $j_i : \mathcal{U}_i \to X$ induce a map
\[
\Phi: \CHOP^*(X) \to \varprojlim_{i \in I} \CHOP^*(\mathcal{U}_i)
\]
and this map $\Phi$ is an isomorphism.
\end{lemma}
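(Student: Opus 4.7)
The map $\Phi$ is well-defined since for $\mathcal{U}_i \subseteq \mathcal{U}_j$ with inclusion $j_{ij}: \mathcal{U}_i \to \mathcal{U}_j$, we have $j_i = j_j \circ j_{ij}$, and the contravariant functoriality of operational classes under pullback (which is part of their definition in terms of composing with pullbacks) gives $j_i^* c = j_{ij}^* j_j^* c$, so the collection $(j_i^* c)_i$ lies in the inverse limit. The proof of isomorphism will hinge on the following factorization statement: for any morphism $g: B \to X$ with $B$ finite type over $k$, there exists an index $i \in I$ such that $g$ factors as $g = j_i \circ g'$ for some $g': B \to \mathcal{U}_i$. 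Indeed, the open substacks $g^{-1}(\mathcal{U}_i)$ form an open cover of $B$; since $B$ is finite type (hence quasi-compact), finitely many of them suffice, and since $(\mathcal{U}_i)_{i \in I}$ is directed, a single $\mathcal{U}_i$ contains the image of $g$.

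For injectivity of $\Phi$, suppose $j_i^* c = 0$ for all $i$. Given any morphism $g: B \to X$ with $B$ finite type and stratified by quotient stacks, factor $g = j_i \circ g'$ using the observation above. Then by the compatibility of operational classes with pullback, $c(g) = (j_i^* c)(g') = 0$, so $c = 0$.

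For surjectivity, given a compatible family $(c_i)_i \in \varprojlim_{i \in I} \CHOP^*(\mathcal{U}_i)$, I construct $c \in \CHOP^*(X)$ by defining, for each $g: B \to X$ (with $B$ finite type, stratified by quotient stacks), $c(g) := c_i(g')$ where $g = j_i \circ g'$ is a factorization as above. Independence of the choice of $i$ follows from the directedness of the system together with the compatibility condition: if $g$ factors through both $\mathcal{U}_i$ and $\mathcal{U}_k$ with $\mathcal{U}_i \subseteq \mathcal{U}_k$, then writing the two factorizations as $g'$ and $g''$ with $g'' = j_{ik} \circ g'$, the relation $c_i = j_{ik}^* c_k$ gives $c_i(g') = c_k(j_{ik} \circ g') = c_k(g'')$. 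Applying the observation once more to any common refinement handles the general case.

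The main thing left to verify is that the collection $(c(g))_g$ satisfies the three compatibility conditions from Definition~\ref{def:Operationalclass} (representable proper pushforward, flat pullback, and refined Gysin pullback along representable lci morphisms). In each case, one is given a diagram of finite type stacks mapping to $X$; by the factorization observation applied to the morphisms to $X$ in the diagram and the directedness of $(\mathcal{U}_i)$, the entire diagram factors through a single $\mathcal{U}_i$, so the required compatibility for $c$ is inherited directly from the corresponding compatibility for $c_i$. Finally, one checks $\Phi(c) = (c_i)_i$: for any $h: B \to \mathcal{U}_i$ the composition $j_i \circ h$ factors through $\mathcal{U}_i$ trivially, so $(j_i^* c)(h) = c(j_i \circ h) = c_i(h)$ as required. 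The only real subtlety is bookkeeping when verifying compatibility in the Gysin pullback case, since one must arrange the refining index so that both the source and the target of the relevant morphism factor through the same $\mathcal{U}_i$; this again follows from directedness applied to the finitely many index choices appearing.
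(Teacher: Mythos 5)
Your proof is correct and follows exactly the route the paper takes: the paper's own proof is a one-line reference to \cite[Corollary 15]{BHPSS}, citing precisely the fact you establish first, namely that any morphism from a finite type stack $B$ to $X$ factors through some $\mathcal{U}_i$ by quasi-compactness and directedness. Your write-up simply makes explicit the well-definedness, injectivity, surjectivity, and compatibility checks that the cited argument carries out.
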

\begin{proof}
The proof, which uses the fact that each map $B \to X$ from a finite-type stack $B$ must factor through one of the $\mathcal U_i$ by Noetherian induction, goes verbatim as the proof presented in \cite[Corollary 15]{BHPSS}. 
\end{proof}

\begin{lemma}\label{Lem:properlcipushforward}
Let $f\colon X\to Y$ be a representable, proper, flat morphism of relative dimension $d$. Then there is a pushforward map
\[f_*\colon \CHOP^*(X)\to \CHOP^{*-d}(Y)\]
defined as follows. For a cartesian diagram
\begin{equation}\label{eqn:cartesianPushforward}
    \begin{tikzcd}
     C\arrow[r,"f'"]\arrow[d,"g'"] & B\arrow[d,"g"]\\
     X \arrow[r,"f"] & Y
    \end{tikzcd}
\end{equation}
and $c\in \CHOP^*(X)$, 
\[(f_*c)(g)\cdot\alpha = f'_*(c(g')\cdot (f')^*\alpha),\text{ for }\alpha\in \CH_*(B)\,.\]
If $f\colon X\to Y$ is a representable, proper, lci morphism of relative dimension $d$, the pushforward map is similarly defined by the formula
\[(f_*c)(g)\cdot\alpha = f'_*(c(g')\cdot f^!\alpha),\text{ for }\alpha\in \CH_*(B)\]
using the refined Gysin pullback. For a morphism $f$ which is representable, flat and lci, the two definitions coincide.
% \begin{enumerate}[label=(\alph*)]
%     \item If $f$ is a representable, proper, flat morphism then the pushforward is well-defined.
%     \item If $f$ is a representable, proper, lci morphism then the pushforward is well-defined.
%     \item If $f$ is a representable, proper, flat and lci morphism then two notions coincide.
% \end{enumerate}
\end{lemma}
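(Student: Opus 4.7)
The plan is to follow the standard template for the construction of proper pushforward on operational Chow groups, as developed in \cite[Chapter~17]{Fulton1984Intersection-th}, but adapted to the stack setting using the theory of proper pushforward on Chow groups developed in \cite{skowera} and summarized in Proposition \ref{prop:properCompatibility}. The proof has three conceptual steps: checking that the formulas make sense, checking the three compatibility axioms of Definition \ref{def:Operationalclass}, and checking the coincidence statement.

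First, given a morphism $g\colon B \to Y$ from an algebraic stack $B$ of finite type over $k$ and stratified by quotient stacks, the fibre product $C = X \times_Y B$ is of finite type over $k$ (since $f$ is of finite type) and stratified by quotient stacks (the stratification of $B$ pulls back along the representable morphism $f'$), so that $c(g')$ is well-defined on $\CH_*(C)$. Since $f$ is representable, proper, and flat (respectively lci) of relative dimension $d$, so is $f'$. A quick dimension count shows $f'_*(c(g')\cdot (f')^*\alpha) \in \CH_{m+d-p}(B)$ for $\alpha \in \CH_m(B)$ and $c \in \CHOP^p(X)$, which is the expected target of an operational class of degree $p-d$. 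In the lci case the same count applies with $(f')^*$ replaced by $f^!$, since the refined Gysin pullback shifts dimension by $d$ as well.

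Next I verify the three compatibilities that $f_* c$ must satisfy to define an element of $\CHOP^{*-d}(Y)$. For a flat morphism $h\colon B' \to B$ of constant relative dimension, base change along $h$ yields a Cartesian cube and the equality $h^* f'_* = f''_* (h')^*$ holds by Proposition \ref{prop:properCompatibility}; combined with the functoriality $(h')^*(f')^* = (f'')^* h^*$ of flat pullback and the flat-pullback compatibility of the operational class $c$, we obtain $(f_*c)(g\circ h)\cdot h^*\alpha = h^*((f_*c)(g)\cdot \alpha)$. For a proper representable morphism $p\colon B'\to B$ one reasons analogously using that proper pushforwards commute in a Cartesian square (again Proposition \ref{prop:properCompatibility}) and that $c$ is compatible with proper pushforward. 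For a regular local immersion $i\colon Z \hookrightarrow B$ the required identity follows from the compatibility of the refined Gysin pullback with proper pushforward (Proposition \ref{prop:properCompatibility}) and with flat pullback (respectively with other Gysin pullbacks in the lci case, where one invokes commutativity of the two Gysin pullbacks in a Cartesian diagram, as in \cite[Theorem~6.4]{Fulton1984Intersection-th}). In the lci case, the same arguments apply to the formula using $f^!$, replacing each instance of ``flat pullback commutes with $f'_*$'' by ``Gysin pullback commutes with $f'_*$'', which is exactly what Proposition \ref{prop:properCompatibility} provides.

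For the final assertion, when $f$ is simultaneously flat and lci of relative dimension $d$, the two definitions coincide because for any morphism $g\colon B\to Y$ the induced morphism $f'$ is both flat and lci, and in this situation $(f')^*\alpha = (f')^!\alpha$ for every $\alpha \in \CH_*(B)$ by the standard comparison of flat and Gysin pullback for flat lci morphisms (this passes to stacks via Kresch's construction). The principal technical input throughout is Proposition \ref{prop:properCompatibility}, namely Skowera's extension of proper pushforward beyond the projective case together with its compatibility with the other basic operations; once this is in hand, the verification of the operational axioms is purely formal and mirrors the scheme-theoretic case. The main obstacle is therefore not the individual compatibilities but ensuring, throughout the argument, that all pushforwards that appear are along representable proper morphisms so that Skowera's theorem (Theorem \ref{Thm:properpushforward}) applies; this is where the representability assumption on $f$ enters crucially, via the representability of $f'$ under arbitrary base change by $g$.
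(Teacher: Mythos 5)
Your proposal is correct and follows essentially the same route as the paper's proof: verifying the three compatibility axioms (proper pushforward, flat pullback, refined Gysin pullback for regular local immersions) by combining the corresponding compatibility of the operational class $c$ on the fibre product with the commutation statements of Proposition \ref{prop:properCompatibility}, and deducing the coincidence of the two definitions from the agreement of flat and Gysin pullback of cycles. The paper writes out only the lci case and notes the flat case is analogous, while you treat both in parallel and add a short well-definedness check; these are presentational differences only.
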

\begin{proof}
{We check that the collection of maps $f_*c$ defines an operational Chow class.} We will only give a proof for the case that $f$ is lci, the proof for flat morphisms is similar. The fact that the two notions coincide for $f$ both flat an lci follows from the formula and the fact that the flat pullback and the lci pullback of cycles coincide.

Let $h\colon B'\to B$ be a representable proper morphism and consider the following cartesian diagram
\begin{equation}\label{eqn:compatibility}
    \begin{tikzcd}
     C'\arrow[r,"f''"] \arrow[d,"h'"] & B'\arrow[d,"h"]\\
     C\arrow[r,"f'"]\arrow[d,"g'"] & B\arrow[d,"g"]\\
     X\arrow[r,"f"] &Y.
    \end{tikzcd}
\end{equation}
For $c\in \CHOP^*(X)$ and $\alpha\in \CH_*(B')$ we have
\begin{align*}
 h_* (f_* c)(g \circ h)(\alpha) &= h_* \left( f''_*( c(g' \circ h') \cdot f^! \alpha)\right) \\
 &= f'_* \left( h'_*( c(g' \circ h') \cdot f^! \alpha)\right) \\
 &= f'_* \left( c(g' ) \cdot h'_* f^! \alpha\right) \\
 &= f'_* \left( c(g' ) \cdot f^! h_*  \alpha\right) = (f_* c)(g)(h_* \alpha),
\end{align*}
where the third equality uses the compatibility with proper pushforward for the operational class $c$ and the fourth equality uses compatibility of Gysin pullback and proper pushforward in Proposition \ref{prop:properCompatibility}.

% For $\alpha\in \CH_*(B')$ we have
% \begin{align*}
%     f'_*\left(f^!(h_*\alpha)\right)=f'_*\left( h'_*f^!\alpha\right)
%     =h_*f''_*\left( f^!\alpha\right)
% \end{align*}
% where the first equality uses compatibility of refined Gysin pullback and proper pushforward in Proposition \ref{prop:properCompatibility}.
Similarly let $h\colon B'\to B$ be a flat morphism and $\beta \in \CH_*(B)$, then we have
\begin{align*}
 (f_* c)(g \circ h)(h^* \beta) &=  f''_*( c(g' \circ h') \cdot f^! h^* \beta) \\
 &=  f''_*( c(g' \circ h') \cdot  (h')^* f^! \beta) \\
 &=  f''_*( (h')^* ( c(g') \cdot  f^! \beta) )\\
 &=  h^* f'_* ( c(g') \cdot  f^! \beta) = h^*(f_* c)(g )(\beta),
\end{align*}
where the second equality uses compatibility of Gysin maps with flat pullbacks, the third equality uses that $c$ is compatible with flat pullbacks and the fourth equality uses that pushforwards are compatible with flat pullbacks.

% \begin{align*}
%     f''_*\left(f^!h^*\alpha\right)=f''_*\left(h'^*f^!\alpha\right)=h^*\left(f'_*f^!\alpha\right)
% \end{align*}

Let $j\colon B\to Z$ be a morphism and $h\colon Z'\to Z$ be a (representable) regular local immersion where $Z'$ is stratified by global quotient stacks. Consider the fiber diagram 
\begin{equation*}
    \begin{tikzcd}
     C'\arrow[r,"f''"] \arrow[d,"h''"] & B'\arrow[r,"j'"]\arrow[d,"h'"] & Z'\arrow[d,"h"]\\
     C\arrow[r,"f'"]\arrow[d,"g'"] & B\arrow[r,"j"]\arrow[d,"g"] & Z\\
     X\arrow[r,"f"] &Y
    \end{tikzcd}\,.
\end{equation*}
For $\alpha\in \CH_*(B)$ we have
\begin{align*}
    h^!(f_*c)(g)(\alpha) &= h^!\left(f'_*(c(g')\cdot f^!\alpha)\right)\\
    &=f''_*\left(h^!(c(g')\cdot f^!\alpha)\right)\\
    &=f''_*\left(c(g'\circ h'')\cdot h^!f^!(\alpha)\right)\\
    &=f''_*\left(c(g'\circ h'')\cdot f^!h^!(\alpha)\right)=(f_*c)(g\circ h')(h^!\alpha)
\end{align*}
where the fourth equality comes from the commutativity of refined Gysin pullback (\cite[Section 3.1]{kreschartin}) and the second equality comes from Proposition \ref{prop:properCompatibility}.
\end{proof}
%\Ycomment{I agree with referee's point. We need to assume that $X$ is stratified by quotient stacks otherwise $c(j_i)$ does not act on $[\mathcal{U}_i]$.}
\begin{lemma}
Let $X$ be an equidimensional algebraic stack of dimension $n$ {which is stratified by global quotient stacks}. Then there exists a well-defined map
\begin{equation}\label{eqn:fundamentalclass}
    \cap \, [X]\colon \CHOP^*(X)\to \CH_{n-*}(X)\,.
\end{equation}
\end{lemma}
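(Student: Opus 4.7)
The plan is to reduce the construction of $c \cap [X]$ to a compatible family of intersections on the finite type open substacks of $X$, using the description of $\CH_*(X)$ as an inverse limit from Definition \ref{Def:Chowlocallyft}.

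More concretely, fix a directed system $(\mathcal{U}_i)_{i \in I}$ of finite type open substacks covering $X$. Since $X$ is equidimensional of dimension $n$, each $\mathcal{U}_i$ is equidimensional of dimension $n$ as well, and so carries a fundamental class $[\mathcal{U}_i] \in \CH_n(\mathcal{U}_i)$. For each $i$, let $j_i : \mathcal{U}_i \hookrightarrow X$ be the inclusion; this is a morphism from a finite type stack (which is stratified by quotient stacks whenever $X$ is -- but in any case, the definition of an operational class only demands compatibility against such finite-type $B$, and we may restrict to those $\mathcal{U}_i$ which satisfy the hypothesis). The first step is to set
\[
(c \cap [X])_i \;:=\; c(j_i)\bigl([\mathcal{U}_i]\bigr) \;\in\; \CH_{n-*}(\mathcal{U}_i).
\]

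The second step is to verify that the family $\bigl((c \cap [X])_i\bigr)_i$ is compatible with the transition maps defining $\CH_{n-*}(X) = \varprojlim_i \CH_{n-*}(\mathcal{U}_i)$. Given $\mathcal{U}_i \subseteq \mathcal{U}_j$ with open immersion $\iota : \mathcal{U}_i \hookrightarrow \mathcal{U}_j$, we have $j_i = j_j \circ \iota$, and $\iota$ is flat with $\iota^*[\mathcal{U}_j] = [\mathcal{U}_i]$ since both stacks are equidimensional of the same dimension. The compatibility of operational classes with flat pullback (built into Definition \ref{def:Operationalclass}) then gives
\[
\iota^*\bigl(c(j_j)([\mathcal{U}_j])\bigr) \;=\; c(j_j \circ \iota)(\iota^*[\mathcal{U}_j]) \;=\; c(j_i)([\mathcal{U}_i]),
\]
which is exactly the required compatibility $(c \cap [X])_j|_{\mathcal{U}_i} = (c \cap [X])_i$. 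Thus the family assembles to a well-defined class $c \cap [X] \in \CH_{n-*}(X)$.

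The assignment $c \mapsto c \cap [X]$ is $\mathbb{Q}$-linear on each $\mathcal{U}_i$, so by the universal property of the inverse limit it is $\mathbb{Q}$-linear on $\CH_{n-*}(X)$ as well; and the construction is independent of the choice of directed system $(\mathcal{U}_i)_i$ because any two such systems mutually dominate each other, as already noted after Definition \ref{Def:Chowlocallyft}. No genuine obstacle arises: the only nontrivial input is the flat pullback compatibility of $c$, which is one of the defining axioms, and the equidimensionality hypothesis ensures that fundamental classes restrict correctly between the $\mathcal{U}_i$. When $X$ is itself of finite type, the construction recovers the classical formula $c(\mathrm{id}_X)([X])$ by taking the system $(\mathcal{U}_i)_i$ to consist of the single element $X$.
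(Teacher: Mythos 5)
Your proposal is correct and follows exactly the paper's argument: the paper also defines $c\cap[X]$ by the compatible family $c(j_i)\cdot[\mathcal{U}_i]$ over a directed system of finite type open substacks and invokes the functoriality (flat pullback compatibility) of $c$ to see the family is an element of the inverse limit. You have simply spelled out the transition-map verification that the paper leaves implicit.
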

\begin{proof}
Let $(\mathcal{U}_i)_{i \in I}$ be a directed system of finite type open substacks of $X$ whose union is all of $X$. {Given $c \in \CHOP^*(X)$, for each open embedding $\iota_i\colon \mathcal{U}_i\to X$ we consider the cycle $c(\iota_i)\cdot [\mathcal{U}_i] \in \CH_*(\mathcal{U}_i)$. For a given $i\in I$ let $\ell\in I$ be an element such that $\mathcal{U}_\ell$ contains $\mathcal{U}_i$. Let $\iota_{i\ell}:\mathcal{U}_i\to \mathcal{U}_\ell$ be the open embedding. Since an operational Chow class commutes with the flat pullback, we have
\[\iota_{i\ell}^*(c(\iota_\ell)\cdot [\mathcal{U}_\ell]) = c(\iota_i)\cdot (\iota_{i\ell}^*[\mathcal{U}_\ell])=c(\iota_i)\cdot [\mathcal{U}_i]\,.\] 
Therefore the collection of cycles $c(\iota_i)\cdot [\mathcal{U}_i]$ gives a well-defined element in $\varprojlim_{i \in I} \CH_*(\mathcal U_i)$.}
\end{proof}
The following theorem is an analogy of the Poincar\'e duality for smooth stacks.
\begin{theorem}\label{Pro:Poincare}
Let $X$ be a smooth equidimensional algebraic stack of dimension $n$ stratified by {global} quotient stacks. Then the canonical map \eqref{eqn:fundamentalclass} with {$\mathbb{Q}$-coefficients} is an isomorphism of associative $\mathbb{Q}$-algebras.
\end{theorem}
\begin{proof}{In the following proof all Chow groups are with $\mathbb{Q}$-coefficients.} 
By Lemma \ref{Lem:CHOPonftcover} we see that both sides of \eqref{eqn:fundamentalclass} can be defined as the inverse limit over a cover of $X$ by  finite-type open substacks $\mathcal{U}_i$ and the map \eqref{eqn:fundamentalclass} is the map induced by the compatible system of maps
\begin{equation*}
    \cap \, [\mathcal{U}_i]\colon \CHOP^*(\mathcal{U}_i)\to \CH_{n-*}(\mathcal{U}_i), \, c\mapsto c(\mathrm{id})\cdot[\mathcal{U}_i]\,.
\end{equation*}
Thus it suffices to prove the result for $X$ of finite type over $k$.

%To start, we review a construction similar to the one presented in \cite[Section 2.3]{BHPSS} shows that there exists a map
{To start, there exists a map}
\begin{equation}
    \Phi \colon \CH_{n-*}(X) \to \CHOP^*(X)
\end{equation}
{constructed in \cite[Section 2.3]{BHPSS}.}
Given $\beta \in \CH_{n-*}(X)$ and $\varphi : B \to X$ from an algebraic stack finite type over $k$, stratified by global quotient stacks, consider the graph morphism $\varphi_B : B \to B \times X$. Then $\varphi_B$ is representable and regular local immersion because $X$ is smooth over $k$. The map $\Phi$ is defined by
\[\Phi(\beta)(\varphi) : \CH_*(B) \to \CH_*(B), \alpha \mapsto \varphi_B^! \left(\alpha \times \beta \right)\,.\]
We show that $\Phi$ is the inverse of $\cap \,[X]$ following the parallel argument in \cite[Chapter 17]{Fulton1984Intersection-th}. We note that from the definition of $\Phi$ it is easy to see that it is multiplicative, using that the product in $\CH_*(X)$ is defined by $\beta_1 \cdot \beta_2 = \Delta^! ( \beta_1 \times \beta_2)$, where $\Delta\colon X\to X\times X$ is the diagonal morphism.

Let  $p_2\colon X\times X\to X$ be the projection to the second factor. For all $\beta\in \CH_*(X)$, we have 
\[\Delta^!\left([X] \times \beta\right) =\Delta^!p_2^*\beta = \Delta^!p_2^!\beta = \beta\]
by the functoriality of the Gysin pullback. It shows that $\Phi(\beta)\cap [X]=\beta$.

We prove the other direction. For $c\in \CHOP^*(X)$ and $\alpha\in \CH_*(B)$ it is sufficient to prove that
\[\varphi_B^!\left(\alpha\times (c(\mathrm{id})\cdot[X])\right)= c(\varphi)\cdot\alpha\,.\]
Let $p_2 \colon B\times X\to X$ be the projection to the second factor. We first prove 
\begin{equation} \label{eqn:poincarehelpequation} \alpha\times \left(c(\mathrm{id})\cdot [X]\right) = c(p_2)\cdot (\alpha\times [X]) \textup{ in } \CH_*(B\times X)\,.\end{equation}
When $B$ is equidimensional and $\alpha$ is the class of the fundamental class $[B]$, the equality follows from the compatibility of $c$ with flat pullback by $p_2$:
\begin{equation} \label{eqn:fundclasscompat} [B]\times\left(c(\mathrm{id})\cdot [X]\right)=p_2^*\left(c(\mathrm{id})\cdot[X]\right) = c(p_2)\cdot [B\times X]\,.\end{equation}
A general class $\alpha$ can be represented as $(f,\alpha_0)$ where $f\colon Y\to B$ is a projective morphism and $E$ is a vector bundle on $Y$ and $\alpha_0\in \CH^\circ_*(E)$. Adding a trivial component to $Y$ we may assume $f$ is surjective. By Proposition \ref{Pro:surproppush} and the homotopy invariance property it is enough to check this equality for a class in $\CH^\circ_*(E)$. A class in $\CH^\circ_*(E)$ can be written as a linear combination of classes $[V]$ where $j:V\to E$ are integral closed substacks. Thus it suffices to show the statement for $\alpha_0 = [V]$. For this, consider the composition of maps
\[V\times X\xrightarrow{J=j\times\id} E\times X\xrightarrow{p_2} X\,.\]
By the definition of exterior product we have
\[J_*([V]\times(c(\id)\cdot [X])) = j_{*}[V]\times (c(\id)\cdot [X])\]
in $\CH_*(E\times X)$.
Then we get
\begin{align*}
    j_{*}[V] \times (c(\mathrm{id})\cdot [X]) 
    &= (J)_*([V]\times c(\id)\cdot [X])\\
    &= (J)_*( c(p_2\circ J)\cdot [V\times X])\\
    &= c(p_2)\cdot J_{*}[V\times X]\\
    &= c(p_2)\cdot (j_{*}[V]\times [X])\
\end{align*}
where the second equality follows from the proven case \eqref{eqn:fundclasscompat} and the third equality follows from the compatibility of proper pushforward.
Using \eqref{eqn:poincarehelpequation} we then have
\begin{align*}
    \varphi_B^!\left(\alpha\times (c(\mathrm{id})\cdot [X])\right) &= \varphi_B^!\left(c(p_2)\cdot (\alpha\times [X])\right)\\
    &= c(\varphi)\cdot\varphi_B^!\left(\alpha\times [X]\right) = c(\varphi)\cdot \alpha
\end{align*}
which proves the theorem. 
\end{proof}
The above theorem gives the commutativity of operational Chow group under assumptions.
\begin{corollary}
Let $X$ be an algebraic stack stratified by global quotient stacks.
\begin{enumerate}
    \item[a)] When $X$ is smooth over $k$, $\CHOP^*(X)_\QQ$ is a commutative ring.
    \item[b)] When $\textup{char} (k)=0$,  $\CHOP^*(X)_\QQ$ is a commutative ring. 
\end{enumerate}
\end{corollary}
%Let $X$ be an algebraic stack stratified by global quotient stacks. Then $\CHOP^*(X)_\QQ$ is a commutative ring. 
\begin{proof}
We adapt the proof of \cite[Example 17.4.4]{Fulton1984Intersection-th}. Part a) is a direct consequence of Theorem  \ref{Pro:Poincare} since the intersection product of \cite{kreschartin} is commutative:
\[
\alpha \cdot \beta = \Delta^! (\alpha \times \beta) = \Delta^! (\beta \times \alpha) = \beta \cdot \alpha
\]
for $\alpha, \beta \in \CH_*(X)$ and where $\Delta : X \to X \times X$ is the diagonal, which is invariant under switching the factors of $X \times X$.

The proof of b) relies on functorial resolution of singularities. When $\textup{char} (k)=0$, the resolution of singularities can be done functorially with respect to smooth morphisms, see \cite{EV98}. Therefore we can find a smooth stack $\widetilde{X}$ and a representable, surjective, birational morphism $p: \widetilde{X}\to X$. Since $X$ is stratified by global quotient stacks and $p$ is representable, $\widetilde{X}$ is also stratified by global quotient stacks. Therefore the commutativity of $\CHOP^*(X)_\QQ$ follows because the pullback $p^*$ on $\CHOP^*(-)_\QQ$ is injective.   
\end{proof}
When $k$ is a perfect field, the commutativity of rational operational Chow groups of schemes follows from de Jong's alteration (\cite{deJong96}). However, the authors do not know whether a functorial (with respect to smooth morphisms) construction of alteration is possible. Hence we cannot prove for now the commutativity of operational Chow groups for algebraic stacks over a perfect field of positive {characteristic}.

However, we note that nonetheless all results and formulas concerning tautological classes discussed in the main text are valid over {arbitrary} fields. Indeed, the operational classes only appear in intermediate steps of some of the computations and while these contain some examples of non-smooth spaces (like the universal curve over the space $\M_\Gamma$), we are never in the position of having to exchange orders of multiplication of operational classes on these singular spaces.

In the main part of the paper we use Theorem \ref{Pro:Poincare} above to realize tautological classes in $\CH_*(\M_{g,n,a})$ as operational Chow classes.
The following lemma is then useful when for doing calculus between tautological classes.
\begin{lemma}\label{Lem:OperationalChowcompatible}
Consider the following commutative diagram
\begin{equation*}
    \begin{tikzcd}
U
\arrow[drr, bend left, "r"]
\arrow[ddr, bend right, swap, "s"]
\arrow[dr, "p"] & & \\
& W \arrow[r, "f'"] \arrow[d, "\pi'"] & Z \arrow[d, "\pi"] \\
& X \arrow[r, "f"] & Y
\end{tikzcd}
\end{equation*}
where all stacks are locally of finite type over $k$, equidimensional, stratified by {global} quotient stacks and the square in the middle is a cartesian square. Suppose $f$ is representable, proper and $\pi$ is representable, proper, flat and $p$ is representable, proper, birational. For $\alpha\in \CHOP^*(X)$
\[\pi^*f_*(\alpha\cap[X])=r_*(s^*\alpha\cap [U]) \hspace{2mm} \textup{ in } \CH_*(Z,\mathbb{Q})\,.\]
\end{lemma}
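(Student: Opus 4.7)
The plan is to establish the identity by a short chain of equalities, each an instance of a standard compatibility — base change, flat pullback of operational classes, projection formula, or the behaviour of fundamental classes under proper birational / flat maps. Setting up notation, since the middle square is cartesian and $\pi$ is flat, the induced morphism $\pi'$ is flat of the same relative dimension; and from $p\colon U\to W$ being representable, proper, and birational (and $W$ equidimensional), we get $p_*[U]=[W]$. Also $s=\pi'\circ p$ and $r=f'\circ p$ by commutativity of the diagram.

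Concretely, the computation I would carry out is
\begin{align*}
\pi^* f_*(\alpha \cap [X])
&= f'_*\bigl((\pi')^*(\alpha \cap [X])\bigr) \\
&= f'_*\bigl((\pi')^*\alpha \cap [W]\bigr) \\
&= f'_*\bigl((\pi')^*\alpha \cap p_*[U]\bigr) \\
&= f'_* p_*\bigl(p^*(\pi')^*\alpha \cap [U]\bigr) \\
&= r_*\bigl(s^*\alpha \cap [U]\bigr).
\end{align*}
The first equality is flat base change for proper pushforward applied to the cartesian middle square, which is part of Proposition \ref{prop:properCompatibility}. The second uses the compatibility of operational classes with flat pullback built into Definition \ref{def:Operationalclass}, namely $(\pi')^*(\alpha(\mathrm{id}_X)\cdot \beta)=\alpha(\pi')\cdot (\pi')^*\beta$, together with $(\pi')^*[X]=[W]$ (flatness of $\pi'$ between equidimensional stacks). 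The third equality is $p_*[U]=[W]$. The fourth is the projection formula for the operational class $(\pi')^*\alpha\in\CHOP^*(W)$ with respect to the proper representable morphism $p$, which follows directly from the compatibility of operational classes with proper pushforward in Definition \ref{def:Operationalclass}. The fifth uses $p^*(\pi')^*\alpha = (\pi'\circ p)^*\alpha = s^*\alpha$ and $f'_*\circ p_* = (f'\circ p)_* = r_*$.

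The main thing to verify, and the only real obstacle, is that these operations remain valid in the present locally finite type and (potentially non-Deligne-Mumford) stacky setting. This is handled by two observations. First, by Definition \ref{Def:Chowlocallyft} and Lemma \ref{Lem:CHOPonftcover}, both sides of the desired identity can be evaluated on a cofinal system of finite-type open substacks of $Z$, reducing the claim to the case where all stacks are of finite type over $k$ and stratified by quotient stacks; there, proper pushforward is available through Skowera's construction (Theorem \ref{Thm:properpushforward}) and satisfies the required compatibilities (Proposition \ref{prop:properCompatibility}), while the projection formula is exactly the compatibility built into the definition of operational class. Second, the equality $p_*[U]=[W]$ uses properness, representability, and birationality of $p$: $p$ is an isomorphism over a dense open of $W$, and $p_*$ of any cycle supported on the complement in $U$ has strictly smaller dimension than $\dim W$, so no correction is introduced in top degree. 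No further technical input is needed.
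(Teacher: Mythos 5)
Your proof is correct and follows essentially the same chain of equalities as the paper's own argument: flat base change for the cartesian square, compatibility of the operational class with flat pullback, the identity $p_*[U]=[W]$, and the projection formula for the proper map $p$. The extra remarks on reducing to the finite-type case and on why $p_*[U]=[W]$ are sound and only make explicit what the paper leaves implicit.
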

\begin{proof}
Using the compatibility of proper pushforward and flat pullback of the cycle $\alpha \cap [X]$ in $\CH_*(X,\mathbb{Q})$ and the fact that, by definition, $\alpha$ is compatible with flat pullback by $\pi'$, we have
\begin{align*}
    \pi^* f_* (\alpha \cap [X]) &= f'_*(\pi')^* (\alpha \cap [X])\\
    &= f'_* \left( ((\pi')^*\alpha) \cap (\pi')^*[X] \right)\\
    &= f'_* \left( ((\pi')^*\alpha) \cap [W] \right)\,.
\end{align*}
Since $p_*[U]=W$, we use the projection formula for the proper morphism $p$ and obtain
\begin{align*}
    f'_* \left( ((\pi')^*\alpha) \cap [W] \right) &= f'_* \left( ((\pi')^*\alpha) \cap p_*[U] \right)\\
    &=f'_* \left( p_* \left( (p^* (\pi')^*\alpha) \cap [U] \right)\right)\\
    &=r_*  (s^*\alpha) \cap [U] .
\end{align*}
%========OLD ======
%For $\alpha\in\CH^*(X)$
%\begin{align*}
%    \pi^*f_*(\alpha\cap[X])&= \pi^*f_*(\alpha\cap f^![Y])\\
%    &=\pi^*(f_*\alpha\cap [Y])\\ 
%    &= \pi^*f_*\alpha\cap [Z]\\
%    &=f'_*(\pi^{'*}\alpha\cap f^![Z]).
%\end{align*}
%Since $\pi$ is flat, $f'$ is also lci and $f$ and $f'$ have the same relative dimension. Therefore $f^![Z]=f'^![Z] =[W]$. On the other hand, $p_*[U] = [W]$ by the assumption and 
%\begin{align*}
%    r_*(s^*\alpha\cap [U])&=f'_*p_*(p^*\pi'^{*}\alpha\cap[U])\\
%    &=f'_*(\pi'^*\alpha\cap p_*[U])\\
%    &=f'_*(\pi'^*\alpha\cap[W])\,.
%\end{align*}
This proves the identity. 
\end{proof}
We conclude the section by comparing the approach to operational classes above to the one taken in the paper \cite{BHPSS}. This paper studies the intersection theory of the universal Picard stack $\mathfrak{Pic}_{g,n}$ of the universal curve $\C_{g,n} \to \M_{g,n}$. However, instead of studying the operational Chow classes defined above, this paper considers operational Chow groups $\CH^*_\text{op}(\mathfrak{Pic}_{g,n})$  where the test spaces $B \to X$ are restricted to be finite type {\em schemes}. A class $c \in \CH^p_\text{op}(X)$ on a locally finite type algebraic stack $X$ over $k$ is a collection of operations
 \[c(\varphi) : \CH_*(B) \to \CH_{*-p}(B)\]
for every morphism $\varphi: B \to X$ where $B$ is a scheme of finite type over $k$, satisfying compatibility conditions as in Definition \ref{def:Operationalclass}, see \cite[Definition 10]{BHPSS} for details. We have comparison maps between $\CHOP^*, \CH^*_\text{op}$ and $\CH^*$.\footnote{For the remainder of the section we assume that $X$ is equidimensional and write $\CH^*(X)$ for the Chow ring indexed by codimension, to emphasize that the comparison maps below are morphisms of graded rings.}
%\jocomment{The referee complained here that $\CH^*$ should be $\CH_*$. Maybe we should add a footnote at the end of the last sentence saying:}""
As explained in \cite[Section 2.3]{BHPSS}, for $X$ smooth, equidimensional and admitting a stratification by global quotient stacks, there exists a natural map
\begin{equation} \label{eqn:CHCHopmorph}
     \CH^*(X) \to \CH^*_\text{op}(X)\,.
 \end{equation}
%which specializes to the isomorphism \cite[Corollary 17.4]{Fulton1984Intersection-th} for $X$ a scheme. 
On the other hand for any algebraic stack $X$, there exists a natural map \begin{equation}\label{eqn:OPopComparison}
    \CHOP^*(X)\to \CH^*_\text{op}(X)
\end{equation}
defined by the restriction. The following statement is a direct consequence of Theorem \ref{Pro:Poincare}.
%When $X$ is a quotient stack, we have the following statement.
%\begin{proposition}\label{pro:QuotientStackComparison}
%When $X$ is a quotient stack, \eqref{eqn:OPopComparison} is an isomorphism.
%\end{proposition}
%\begin{proof}
%We show that the inverse of \eqref{eqn:OPopComparison} exists. Let $\varphi \colon B\to X$ be a representable morphism. Since $\varphi$ is a representable morphism, $B$ is also a quotient stack by \cite[Lemma 2.12]{kreschbrauer}. $B$ has a vector bundle $E$ such that it is represented by a scheme off a locus of arbitrary high codimension. Any class of $B$ can be realized by a class supported on the scheme locus of $E$ by the homotopy invariance. Therefore we can define action of $\CH_\text{op}^*(X)$ to a class in $\CH_*(B)$. The action is well-defined because operational classes commute with flat pullbacks. It is straightforward to check that this extension defines a class in $\CHOP^*(X)$ and it gives the inverse of \eqref{eqn:OPopComparison}.
%\end{proof} 
\begin{corollary}
When $X$ is an equidimensional smooth Deligne-Mumford stack over $k$, the comparison maps \eqref{eqn:CHCHopmorph} and \eqref{eqn:OPopComparison} are isomorphisms.
\end{corollary}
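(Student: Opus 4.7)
My plan is as follows. Write $a$ for the comparison map \eqref{eqn:CHCHopmorph} and $b$ for the comparison map \eqref{eqn:OPopComparison}. These fit into a triangle
\begin{equation*}
\begin{tikzcd}
 \CH^*(X) \arrow[rr, "a"] \arrow[dr,"\Phi"'] & & \CH^*_\text{op}(X)\\
 & \CHOP^*(X) \arrow[ur, "b"'] &
\end{tikzcd}
\end{equation*}
where $\Phi$ denotes the inverse of the Poincar\'e duality isomorphism $\cap\,[X]$ constructed in Proposition \ref{Pro:Poincare}. I will first argue that this triangle commutes. Indeed, as noted in the proof of Proposition \ref{Pro:Poincare} and in \cite[Section 2.3]{BHPSS}, both $\Phi$ and $a$ send $\beta \in \CH^*(X)$ to the operational class $\varphi \mapsto \bigl(\alpha \mapsto \varphi_B^!(\alpha\times\beta)\bigr)$, while $b$ is just the restriction of operations from stack test objects to scheme test objects. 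Since a smooth equidimensional scheme is trivially stratified by quotient stacks, Proposition \ref{Pro:Poincare} applies to give that $\Phi$ is an isomorphism, so by two-out-of-three it suffices to prove that one of $a$ or $b$ is an isomorphism.

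My approach is to show directly that $a$ is an isomorphism by mimicking the proof of Proposition \ref{Pro:Poincare} verbatim, replacing ``test stacks stratified by quotient stacks'' with ``test schemes of finite type''. The inverse $\Phi'\colon \CH_{n-*}(X) \to \CH^*_\text{op}(X)$ will be defined by the same formula as $\Phi$, and I will check the two identities $\Phi'(\beta)\cdot[X]=\beta$ and $\Phi'(c\cdot[X])=c$. The first identity is immediate from functoriality of the refined Gysin pullback along the diagonal $\Delta\colon X\to X\times X$, using $\Delta^!([X]\times\beta)=\Delta^!p_2^!\beta=\beta$. The second identity reduces, via Proposition \ref{Pro:surproppush} applied to a surjective projective morphism of schemes and the homotopy invariance of naive Chow groups, to the case $\alpha=[B]$, where it becomes the same direct computation as at the end of the proof of Proposition \ref{Pro:Poincare}. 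All the required inputs remain valid when one restricts to schemes as test objects, since schemes are trivially stratified by quotient stacks.

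Once $a$ is known to be an isomorphism, the commutativity of the triangle and the isomorphism property of $\Phi$ force $b$ to be an isomorphism as well, completing the proof. I do not anticipate any substantive obstacle: the corollary is indeed ``a direct consequence of Proposition \ref{Pro:Poincare}'' as the paper announces, the essential content being that the proof of Proposition \ref{Pro:Poincare} uses nothing about stack test objects beyond properties (proper pushforward, refined Gysin pullback, homotopy invariance, and Proposition \ref{Pro:surproppush}) that hold equally for schemes. As an even shorter alternative, one could invoke the classical Poincar\'e duality for smooth schemes from \cite[Ch.~17]{Fulton1984Intersection-th}, and reduce the locally-finite-type case to the finite-type case via the evident analogue for $\CH^*_\text{op}$ of Lemma \ref{Lem:CHOPonftcover}.
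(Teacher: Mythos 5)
Your proposal is correct and follows essentially the route the paper intends: the corollary is stated there as a direct consequence of Proposition \ref{Pro:Poincare}, and your argument is exactly that proposition combined with its scheme-test-object analogue (classical Poincar\'e duality for operational Chow groups as in \cite[Ch.~17]{Fulton1984Intersection-th}), linked by the commuting comparison triangle. No gaps; the observation that a scheme is trivially stratified by quotient stacks, and the reduction of the locally-finite-type case via the analogue of Lemma \ref{Lem:CHOPonftcover}, are exactly the points that need saying.
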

\begin{proof}
Indeed, for the two maps
\[
\CH^*(X) \to \CHOP^*(X) \to \CH_\text{op}^*(X)
\]
we have that the first is an isomorphism by Theorem \ref{Pro:Poincare} and the composition is an isomorphism by \cite[Lemma 15]{BHPSS} and thus also the second map must be an isomorphism.
\end{proof}

\bibliographystyle{alpha}
\bibliography{main}
\end{document}